\documentclass{amsart}

\usepackage[english]{babel}

\usepackage[letterpaper,top=2cm,bottom=2cm,left=3cm,right=3cm,marginparwidth=1.75cm]{geometry}

\usepackage{amssymb}
\usepackage{amsmath}
\usepackage{amsthm}
\usepackage{color}
\usepackage{enumerate}
\usepackage{caption}
\usepackage[labelformat=simple]{subcaption}
\usepackage{booktabs}

\usepackage{tikz}
\usetikzlibrary{graphs, graphs.standard,fit,positioning}

\newtheorem{theorem}{Theorem}
\newtheorem{lemma}[theorem]{Lemma}

\title{On the minimum vertex cover of snarks}

\address{Federal University of Ceará, Campus Quixadá, Quixadá, Ceará, Brazil}
\author{Gustavo Fernandes}
\email{gustavofer14@alu.ufc.br} 

\address{Federal University of Ceará, Campus Quixadá, Quixadá, Ceará, Brazil}
\author{At\'ilio G.~Luiz}
\email{gomes.atilio@ufc.br}

\begin{document}

\begin{abstract}
A \emph{vertex cover} of a graph $G$ is a subset of vertices $C \subseteq V(G)$ such that every edge of $G$ is incident to at least one vertex in $C$. The \emph{vertex cover number} of $G$ is the minimum cardinality of a vertex cover of $G$ and is denoted by $\tau(G)$. A \emph{snark} is a connected, bridgeless, cubic graph that has an edge chromatic number of four, meaning its edges cannot be properly colored with only three colors. In this work, we investigate the problem of determining the value of a minimum vertex cover for classes of snark graphs. Given a positive integer $k$, we firstly prove that determining whether an arbitrary snark has a vertex cover $C$ with size $|C| \leq k$ is an NP-complete problem. Secondly, we determine the vertex cover number $\tau(G)$ for several subclasses of snark graphs, such as Flower snarks, Goldberg snarks, Generalized Blanu\v{s}a snarks and Loupekine snarks. 
\end{abstract}

\maketitle

\section{Introduction}
\label{sec:intro}

All graphs in this paper are simple, finite and undirected. Given a graph $G$ with vertex set $V(G)$ and edge set $E(G)$, a \emph{vertex cover} of $G$ is a set of vertices $C \subseteq V(G)$ such that, for every edge $uv \in E(G)$, we have that $u \in C$ or $v \in C$. We say that $C$ is a \emph{minimum vertex cover} of $G$ if, for every vertex cover $C'$ of $G$, it holds that $|C|\leq |C'|$. The \emph{vertex cover number} of $G$ is the cardinality of a minimum vertex cover of $G$ and is denoted by $\tau(G)$. The \textsc{Vertex Cover Problem} is a decision problem, stated as follows: given a graph $G=(V(G),E(G))$ and a positive integer $k$, does there exist a vertex cover of $G$ with cardinality at most $k$?

The study of vertex covers became prominent among graph theorists in the mid-twentieth century. In 1931, while studying matchings in bipartite graphs, the mathematician Dénes K\"{o}nig~\cite{konig1931graphok} proved that the size of a maximum matching in a bipartite graph $G$ is equal to the size of a minimum vertex cover of $G$. Subsequently, in 1959, Tibor Gallai established that the number of vertices of a graph is equal to the sum of the sizes of a maximum independent set and a minimum vertex cover~\cite{gallai1959uber}. In the 1970s, the study of vertex cover became central to the field of computational complexity theory following the work of the computer scientist Richard Karp, who identified 21 NP-complete problems, including the \textsc{Vertex Cover Problem}~\cite{Karp1972}.

The \textsc{Vertex Cover Problem} can be solved in polynomial time for graphs with maximum degree at most 2 since the connected components of these graphs are paths or cycles and the parameter $\tau(G)$ is already determined for these graphs. Consequently, a natural line of research is to investigate the problem for graphs of maximum degree 3, in particular for classes of graphs in which all vertices have degree exactly 3, the so-called \emph{cubic graphs}. However, it is known that the \textsc{Vertex Cover Problem} is NP-complete even when restricted to cubic planar graphs~\cite{GAREY1976237}.
Nevertheless, there have been significant advances in the study of the parameter 
$\tau(G)$ for certain classes of cubic graphs. A notable example is given by the generalized Petersen graphs $P(n, k)$. In particular, Behsaz et al.~\cite{behsaz2010minimumvertexcovergeneralized} and Jin et al.~\cite{JIN2019309} characterized structural properties of vertex covers in these graphs, establishing lower and upper bounds for the parameter $\tau$ for this family, as well as determining exact values of $\tau(P(n, k))$ in specific cases.

Despite the advances already achieved in the study of the \textsc{Vertex Cover Problem}, many classes of cubic graphs remain largely unexplored. One such class is that of \emph{snarks}, which are characterized as  cubic, connected, bridgeless graphs whose edges cannot be properly colored with only three colors. An example of snark, the Petersen graph, is shown in Figure~\ref{fig:petersen1}. 
Interest in these graphs arose in part due to their connection with the Four Color Theorem, which states that any planar map can be colored with at most four colors in such a way that adjacent regions receive distinct colors~\cite{Appel1977}. During attempts to prove this theorem, snarks were identified as critical cases for the failure of certain approaches based on edge colorings. Owing to their structural complexity and extreme properties, snarks play an important role as potential counterexamples or boundary cases in various optimization and decision problems on graphs~\cite{Goedgebeur2020}.

Within the family of snarks, several important subclasses stand out, such as the flower snarks~\cite{Isaacs1975}, the Goldberg snarks~\cite{GOLDBERG1981282}, the generalized Blanuša snarks~\cite{watkins1983snarks}, and the Loupekine snarks~\cite{Isaacs1976}. These families possess symmetries and recursive constructions that make them particularly appealing for the investigation of various graph parameters. For instance, several graph coloring and domination problems have been studied for these families~\cite{GONCALVES2021334,LUIZ2024260,Burdett2024,Kratica2020,Zheng2008,Zhang2014,ADAUTO2025336}. In view of the historical importance of this class in graph theory and the growing interest in studying graph parameters on it, it is natural to investigate it in the context of the Vertex Cover Problem.

In this work, we prove that determining whether an arbitrary snark has a vertex cover $C$ with size $|C| \leq k$ is an NP-complete problem, for $k \in \mathbb{N}$. We also  determine the exact value of the vertex cover number $\tau(G)$ for the subclasses: flower snarks, Goldberg snarks, generalized Blanu\v{s}a snarks and Loupekine snarks.

\begin{figure}
\centering

\begin{tikzpicture}[
    vertex/.style={circle, fill=black, inner sep=2pt},
    edge/.style={thick}
]

\def\R{2}

\foreach \i in {1,...,9} {
    \node[
        vertex,
        label={[font=\small] {90-40*(\i-1)}:$v_{\i}$}
    ] (v\i) at ({90-40*(\i-1)}:\R) {};
}

\node[vertex,label={[font=\small]below:$c$}] (c) at (0,0) {};

\foreach \i in {1,...,9} {
    \pgfmathtruncatemacro{\next}{mod(\i,9)+1}
    \draw[edge] (v\i) to[bend left=8] (v\next);
}

\draw[edge] (v1) -- (c);
\draw[edge] (v4) -- (c);
\draw[edge] (v7) -- (c);

\draw[edge] (v2) to[out=270,in=30] (v6);
\draw[edge] (v3) to[out=150,in=30] (v8);
\draw[edge] (v5) to[out=150,in=-90] (v9);
\end{tikzpicture}
\caption{Petersen graph $P$.}
\label{fig:petersen1}
\end{figure}

\section{Preliminaries}

In this section, we present definitions and basic results used throughout the paper. For graph-theoretic concepts not covered here, the reader is referred to~\cite{west2018introduction}.  Given a graph $G$, the goal of the vertex cover problem is to determine a minimum set of vertices that covers all edges of $G$. The parameter $\tau(G)$ corresponds to  the size of such a minimum vertex cover and satisfies several basic monotonicity and additivity properties that are frequently used in structural arguments. For example, Lemma~\ref{lemma:subgraphVC} expresses the monotonicity of $\tau(G)$ with respect to subgraphs (a minimum vertex cover of $G$ restricted to the vertices of a subgraph $H\subseteq G$ yields a vertex cover of $H$, implying $\tau(H) \leq \tau(G)$), while Lemma~\ref{lemma:VCsubgraph} provides a lower bound in terms of vertex-disjoint subgraphs (vertex-disjoint subgraphs require disjoint coverings). 

\begin{lemma}
\label{lemma:subgraphVC}
If \(H\) is a subgraph of \(G\), then \(\tau(H) \leq \tau(G)\).
\end{lemma}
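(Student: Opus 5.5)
The plan is to restrict a minimum vertex cover of $G$ to the vertex set of $H$ and check that this restriction still covers $H$. Recall that $H$ being a subgraph of $G$ means $V(H) \subseteq V(G)$ and $E(H) \subseteq E(G)$, where every edge of $H$ has both endpoints in $V(H)$.

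First, we fix a minimum vertex cover $C$ of $G$, so that $|C| = \tau(G)$; such a set exists because $V(G)$ itself is a vertex cover and $G$ is finite. Next, we define $C_H = C \cap V(H)$ and show that $C_H$ is a vertex cover of $H$. Take any edge $uv \in E(H)$. Since $E(H) \subseteq E(G)$, the edge $uv$ is an edge of $G$, so $u \in C$ or $v \in C$. Both $u$ and $v$ lie in $V(H)$, so whichever endpoint lies in $C$ also lies in $C \cap V(H) = C_H$. Hence every edge of $H$ has an endpoint in $C_H$.

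Finally, by minimality of $\tau(H)$ we get
\[
\tau(H) \leq |C_H| \leq |C| = \tau(G),
\]
where the middle inequality holds because $C_H \subseteq C$.

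No step is a genuine obstacle. The only point needing care is to intersect $C$ with $V(H)$, so that the restricted set is a subset of $V(H)$ as the definition of a vertex cover of $H$ requires. The containment $E(H) \subseteq E(G)$ and the fact that the endpoints of edges of $H$ lie in $V(H)$ together guarantee that nothing is lost in the restriction.
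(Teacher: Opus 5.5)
Your proof is correct and follows essentially the same approach as the paper, which gives no formal proof but justifies the lemma in the preliminaries by the same remark: a minimum vertex cover of $G$ restricted to $V(H)$ is a vertex cover of $H$. Your write-up makes that remark precise, and intersecting with $V(H)$ is exactly the right detail.
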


\begin{lemma}
\label{lemma:VCsubgraph}
If \(G_1, G_2, \ldots, G_k\) are vertex-disjoint subgraphs of a graph \(G\), then $\tau(G) \geq \tau(G_1) + \tau(G_2) + \cdots + \tau(G_k)$.
\end{lemma}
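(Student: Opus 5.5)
Let $C$ be a minimum vertex cover of $G$, so $|C| = \tau(G)$. For each $i \in \{1,\ldots,k\}$ I will set $C_i = C \cap V(G_i)$ and argue in three steps.

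\emph{Step 1: each $C_i$ covers $G_i$.} Take any edge $uv \in E(G_i)$. Since $G_i$ is a subgraph of $G$, we have $uv \in E(G)$, so $u \in C$ or $v \in C$. Both endpoints lie in $V(G_i)$, so that endpoint lies in $C_i$. Hence $C_i$ is a vertex cover of $G_i$, and therefore $|C_i| \geq \tau(G_i)$. This is exactly the restriction argument behind Lemma~\ref{lemma:subgraphVC}.

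\emph{Step 2: the $C_i$ are pairwise disjoint.} The vertex sets $V(G_1),\ldots,V(G_k)$ are pairwise disjoint, and $C_i \subseteq V(G_i)$, so the sets $C_1,\ldots,C_k$ are pairwise disjoint subsets of $C$.

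\emph{Step 3: sum.} Combining the two steps gives
\[
\tau(G) = |C| \geq \sum_{i=1}^{k} |C_i| \geq \sum_{i=1}^{k} \tau(G_i).
\]

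There is no real obstacle here. The only point needing care is Step 2: disjointness of the vertex sets is what lets the sizes add. Vertex sets that merely have disjoint edge sets would not suffice, since one vertex of $C$ could then be counted in several $C_i$.
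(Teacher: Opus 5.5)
Your proof is correct and follows the paper's approach. The paper gives no formal proof, only the remark that a minimum cover of $G$ restricted to a subgraph covers that subgraph and that vertex-disjoint subgraphs require disjoint coverings; your Steps 1--3 are exactly that argument written out.
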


A \emph{path} on $n\geq 2$ vertices, denoted by $P_n$, is the graph with vertex set $V(P_n) = \{v_1, v_2, \ldots, v_n\}$ and edge set $E(P_n) = \{v_iv_{i+1} \mid 1 \leq i \leq n-1\}$. A \emph{cycle} on $n \geq 3$ vertices, denoted by $C_n$, is the graph with vertex set $V(C_n) = \{v_1, v_2, \ldots, v_n\}$ and edge set $E(C_n) = \{v_iv_{i+1} \mid 1 \leq i \leq n-1\} \cup \{v_n, v_1\}$. The following standard results on the vertex cover numbers of paths and cycles will be used throughout the paper.

\begin{lemma}
\label{lemma:pathsVC}
For the path \(P_n\) with \(n \ge 2\), we have \(\tau(P_n) = \lfloor n/2 \rfloor\).
\end{lemma}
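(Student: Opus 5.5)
We prove the equality $\tau(P_n)=\lfloor n/2\rfloor$ by establishing matching upper and lower bounds, using the labelling $V(P_n)=\{v_1,\dots,v_n\}$ and $E(P_n)=\{v_iv_{i+1} \mid 1\le i\le n-1\}$ fixed above.

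For the upper bound, take $C=\{v_i : i \text{ even},\ 1\le i\le n\}$. This set has exactly $\lfloor n/2\rfloor$ elements. Every edge $v_iv_{i+1}$ joins two consecutive indices, one of which is even, so $C$ covers every edge. Hence $\tau(P_n)\le\lfloor n/2\rfloor$.

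For the lower bound, consider the edges $e_j=v_{2j-1}v_{2j}$ for $1\le j\le\lfloor n/2\rfloor$. They all lie in $P_n$ because $2j\le n$. They are pairwise vertex-disjoint, since $e_j$ uses only the indices $2j-1$ and $2j$. Each $e_j$, viewed as a subgraph isomorphic to $P_2$, has $\tau(e_j)=1$: the set $\{v_{2j}\}$ covers it, and the empty set does not. Applying Lemma~\ref{lemma:VCsubgraph} to these $\lfloor n/2\rfloor$ vertex-disjoint subgraphs gives $\tau(P_n)\ge\sum_j \tau(e_j)=\lfloor n/2\rfloor$. Equivalently, a vertex cover must contain a distinct endpoint of each edge of this matching.

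Combining the two bounds yields the claimed value. No step is a real obstacle; the only point needing care is to check that the index ranges are valid for both parities of $n$. When $n$ is odd, the vertex $v_n$ is simply left out of the matching and of $C$, and both counts remain equal to $\lfloor n/2\rfloor$.
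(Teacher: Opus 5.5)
Your proof is correct and complete. The even-indexed vertices form a cover of size $\lfloor n/2\rfloor$. The $\lfloor n/2\rfloor$ pairwise disjoint edges $v_{2j-1}v_{2j}$ force the matching lower bound. You handled the index ranges correctly for both parities: $2j\le n$ guarantees that each $e_j$ is an edge of $P_n$.

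The paper gives no proof of this lemma; it is quoted as a standard fact, together with the formula for cycles. So there is no competing argument to compare with. Your route is the standard one and uses only the paper's own tools. The lower bound is exactly the matching bound of Lemma~\ref{lemma:minMaxCover}, as you note. Routing it instead through Lemma~\ref{lemma:VCsubgraph}, with $\tau(P_2)=1$ verified directly rather than by appeal to the statement being proved, avoids any circularity.
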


\begin{lemma}
\label{lemma:cyclesVC}
For the cycle \(C_n\) with \(n \ge 3\), we have \(\tau(C_n) = \lceil n/2 \rceil\).
\end{lemma}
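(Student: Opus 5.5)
We prove the two inequalities $\tau(C_n) \le \lceil n/2\rceil$ and $\tau(C_n)\ge \lceil n/2\rceil$ separately, using Lemma~\ref{lemma:pathsVC} for the lower bound.

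\textbf{Upper bound.} We exhibit an explicit cover. Take $C=\{v_i : i \text{ even}\}$ when $n$ is even. Every edge $v_iv_{i+1}$, including $v_nv_1$, joins an even and an odd index, so $C$ is a cover of size $n/2$. When $n$ is odd, take $C=\{v_i : i \text{ even}\}\cup\{v_n\}$. Every edge $v_iv_{i+1}$ with $1\le i\le n-1$ has one even endpoint, and the edge $v_nv_1$ is covered by $v_n$. This cover has size $(n-1)/2+1=\lceil n/2\rceil$.

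\textbf{Lower bound.} For even $n$, the path $v_1v_2\cdots v_n$ is a subgraph of $C_n$. Lemma~\ref{lemma:subgraphVC} together with Lemma~\ref{lemma:pathsVC} then gives $\tau(C_n)\ge \lfloor n/2\rfloor=n/2$.

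For odd $n$, this path argument only yields $(n-1)/2$, so it is one short. This case is the main obstacle, and we close it with a counting argument. Let $C$ be any vertex cover of $C_n$. Each vertex covers exactly two edges of $C_n$, and all $n$ edges must be covered, so $2|C|\ge n$. Hence $|C|\ge n/2$, and since $|C|$ is an integer, $|C|\ge\lceil n/2\rceil$.

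This counting argument applies for every $n\ge 3$, so it could replace the path argument entirely. Combining the two bounds gives $\tau(C_n)=\lceil n/2\rceil$.
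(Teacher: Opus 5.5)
Your proof is correct: the even-indexed vertices, with $v_n$ added when $n$ is odd, form a cover of size $\lceil n/2\rceil$. The double count $2|C|\ge n$ gives the matching lower bound for every $n\ge 3$, so the path-based argument for even $n$ is indeed superfluous. The paper simply cites this lemma as a standard fact without proof, but your counting argument is exactly the one it uses later to prove Lemma~\ref{lemma:goldberg_cycle}.
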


A \emph{matching} in a graph $G$ is a set $M \subseteq E(G)$ of pairwise non-adjacent edges. Since no vertex can cover two edges of a matching, the size of every vertex cover of $G$ is at least the size of every matching of $G$. This is explicitly stated in the following lemma. 

\begin{lemma}[K\"{o}nig~\cite{konig1931graphok}]
\label{lemma:minMaxCover}
Let $G$ be a graph, $C$ a vertex cover of $G$, and $M$ a matching in $G$. Then $|M| \leq |C|$.
\end{lemma}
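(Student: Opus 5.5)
The proof is a direct counting argument: every vertex of a vertex cover can account for at most one edge of a matching, so we build an injection from $M$ into $C$. Let $M=\{e_1,\dots,e_m\}$ be a matching and $C$ a vertex cover of $G$. Since $C$ covers every edge, each $e_i$ has at least one endpoint in $C$. We fix such an endpoint for each edge, choosing arbitrarily if both endpoints lie in $C$, and call it $\varphi(e_i)\in C$. This defines a map $\varphi\colon M\to C$.

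Next we show that $\varphi$ is injective. Suppose $\varphi(e_i)=\varphi(e_j)=v$ with $i\neq j$. Then $v$ is an endpoint of both $e_i$ and $e_j$, so the two edges are adjacent. This contradicts the fact that $M$ consists of pairwise non-adjacent edges. Hence $\varphi$ is injective, and therefore $|M|\le |C|$.

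No step here is a real obstacle. The only point needing care is that $\varphi$ is well defined, which relies on the vertex cover condition applied to each matching edge. We do not need K\"onig's equality for bipartite graphs, only this easy inequality, which holds for every graph. In later sections it will be applied by taking $C$ to be a minimum vertex cover, which gives $\tau(G)\ge |M|$ for any matching $M$ in $G$.
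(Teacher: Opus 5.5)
Your proof is correct and takes essentially the paper's approach. The paper justifies the lemma only by the remark that no vertex can cover two edges of a matching, and your injection $\varphi\colon M\to C$ is a careful formalization of exactly that observation.
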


An \emph{independent set} in \(G\) is a set \(I \subseteq V(G)\) of pairwise non-adjacent vertices. A maximum independent set is one of largest cardinality; its size is denoted by \(\alpha(G)\). Independent sets and vertex covers are related concepts, as stated in the next lemma.

\begin{lemma}[Gallai~\cite{gallai1959uber}]
\label{lemma:gallai1959}
In a graph $G$, $S\subseteq V(G)$ is an independent set if and only if $V(G)\backslash S$ is a vertex cover, and hence $\alpha(G)+\tau(G) = |V(G)|$.
\end{lemma}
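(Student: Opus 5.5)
The plan is to prove both directions of the equivalence directly from the definitions, and then obtain the identity by taking cardinalities.

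\emph{Forward direction.} Let $S \subseteq V(G)$ be independent and let $uv \in E(G)$ be an arbitrary edge. Since $S$ contains no two adjacent vertices, $u$ and $v$ are not both in $S$. Hence at least one of them lies in $V(G)\setminus S$. As the edge was arbitrary, $V(G)\setminus S$ covers every edge, so it is a vertex cover.

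\emph{Converse.} Let $C = V(G)\setminus S$ be a vertex cover and suppose $u,v \in S$ were adjacent. Then neither endpoint of $uv$ lies in $C$, contradicting that $C$ covers $uv$. So $S$ is independent.

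\emph{The identity.} Complementation $S \mapsto V(G)\setminus S$ is a bijection on subsets of $V(G)$. By the equivalence just proved, it maps independent sets exactly onto vertex covers, and it satisfies $|V(G)\setminus S| = |V(G)| - |S|$.
\begin{itemize}
\item If $I$ is a maximum independent set, then $V(G)\setminus I$ is a vertex cover of size $|V(G)|-\alpha(G)$. Hence $\tau(G) \le |V(G)| - \alpha(G)$.
\item If $C$ is a minimum vertex cover, then $V(G)\setminus C$ is an independent set of size $|V(G)| - \tau(G)$. Hence $\alpha(G) \ge |V(G)| - \tau(G)$.
\end{itemize}
Together these give $\alpha(G) + \tau(G) = |V(G)|$. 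The only step needing any care is applying extremality in both directions to get equality rather than just an inequality; there is no real obstacle.
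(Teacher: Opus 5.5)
Your proof is correct and complete. The two directions of the equivalence follow directly from the definitions, and complementing a maximum independent set and a minimum vertex cover gives $\tau(G)\le |V(G)|-\alpha(G)$ and $\alpha(G)\ge |V(G)|-\tau(G)$, which together yield the identity. The paper gives no proof of this lemma (it is quoted from Gallai and used as a black box, e.g.\ in the flower snark argument), and your complementation argument is the standard proof behind that citation.
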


An \emph{edge $k$-coloring}  of $G$ is an assignment $f$ of colors from the set $\{1,2,\ldots,k\}$ to the edges of $G$. We say that the edge $k$-coloring $f$ is \emph{proper} if any two adjacent edges receive distinct colors under $f$. The \emph{chomatic index} of $G$ is the minimum $k$ for which $G$ admits a proper edge $k$-coloring,  and it is denoted by $\chi'(G)$.

\section{Hardness of the Vertex Cover Problem on Snarks}

In this section, we prove that the Vertex Cover Problem (VCP) remains NP-complete even when restricted to snarks. More specifically, we prove the following theorem.

\begin{theorem}
\label{thm:npCompleteSnarks}
The following problem is NP-Complete: for a given bridgeless cubic simple graph $G$ of girth greater than 3 with $\chi'(G)=4$ and a positive integer $k$, is there a vertex cover of $G$ with size $k$?
\end{theorem}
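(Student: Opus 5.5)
Membership in NP is immediate: a candidate set $C$ can be checked in linear time to cover every edge. (A cover of size $k' \le k$ extends to one of size exactly $k$ whenever $k \le |V(G)|$, so ``size $k$'' and ``size at most $k$'' are equivalent here.) For hardness I will reduce from the \textsc{Vertex Cover Problem} on cubic graphs, which is NP-complete by~\cite{GAREY1976237}. Standard reductions can moreover assume the input cubic graph $H$ is connected; bridges and triangles are dealt with by the gadgets below. Given $(H,k)$, I will build in polynomial time a snark-like graph $G$ (bridgeless, cubic, girth $>3$, $\chi'(G)=4$) and an integer $k'$ such that $\tau(G)=\tau(H)+c$, where $c$ is a constant computable from the construction. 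The instance is then $(G, k+c)$.

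The construction has two steps.

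\textbf{Step 1 (remove triangles and bridges without losing control of $\tau$).} Replace every edge $uv$ of $H$ by a path $u\,a\,b\,c'\,d\,v$, i.e.\ subdivide it four times. Even subdivision increases $\tau$ by exactly $2$ per subdivided edge, which is the classical parity trick: $\tau(H')=\tau(H)+2|E(H)|$. The new vertices have degree $2$, so I then pair them up using a fixed cubic gadget of girth $\ge 4$ attached to each subdivided edge. A concrete choice is to join $a$ and $d$ of the same edge to two degree-2 vertices of a small gadget (e.g.\ a copy of $K_{3,3}$ minus an edge, or a cube minus an edge), and to connect $b$ and $c'$ similarly. Because the gadget is attached through two edges forming part of a cycle, no bridges are created, and it has girth $\ge 4$. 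Its exact contribution to $\tau$ must be verified to be a fixed constant independent of how the rest is covered; I would check this by exhibiting a matching (Lemma~\ref{lemma:minMaxCover}) of the right size inside each gadget plus a matching cover of the path, giving the lower bound through Lemma~\ref{lemma:VCsubgraph}, together with an explicit cover for the upper bound.

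\textbf{Step 2 (force $\chi'=4$).} Take the graph from Step 1 and one copy of the Petersen graph $P$. Delete one edge $xy$ from $G_1$ and one edge $pq$ from $P$, then add the edges $xp$ and $yq$ (a $2$-sum, i.e.\ a dot-type join). The $2$-sum of a cubic graph with the Petersen graph is not $3$-edge-colourable, by the parity lemma on the $2$-edge cut: the two cut edges must receive the same colour, and then $P$ would be 3-edge-colourable. The result is still cubic, bridgeless and of girth $>3$. For $\tau$, choose $xy$ inside a gadget where some minimum cover is known to contain both $x$ and $y$, and choose $pq$ similarly in $P$, where $\tau(P)=6$. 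Then covering the new edges costs nothing extra, and a matching argument gives the matching lower bound, so $\tau(G)=\tau(G_1)+6$.

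The main obstacle is the exact additivity of $\tau$ under the gadget replacements, in particular proving the lower bound $\tau(G)\ge\tau(H)+c$. I would handle this by a local exchange argument: any minimum cover of $G$ can be modified inside each gadget, without increasing its size, into a canonical form whose restriction to the original vertices of $H$ is a vertex cover of $H$. Choosing gadgets that are small and tightly matched, with a perfect-matching-like structure whose cover cost is rigid, is what makes this exchange work.
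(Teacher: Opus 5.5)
Your route (subdividing every edge four times, restoring cubicity with bipartite gadgets, and forcing $\chi'=4$ by a single $2$-sum with the Petersen graph) differs from the paper's vertex-replacement reduction. It can be made to work, but as written it has genuine gaps.

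\textbf{Bridges.} The construction does not remove bridges, contrary to your claim. For an edge $uv$ of $H$, everything you add (the vertices $a,b,c',d$ and the two gadgets) hangs between $a$ and $d$. So if $uv$ is a bridge of $H$, then $ua$ and $dv$ are bridges of $G_1$. The $2$-sum with $P$ only replaces one edge $xy$ by a path through $P-pq$, so it cannot repair this. Bridgelessness is part of the target class, so your source problem must already be restricted to bridgeless cubic graphs; hardness for cubic graphs in general does not give that. This is why the paper starts from Uehara's theorem on $3$-connected cubic planar graphs of girth greater than $3$. With such an $H$, your Step 1 does yield a connected bridgeless cubic graph of girth at least $4$.

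\textbf{Step 1 lower bound.} With $K_{3,3}-e$ gadgets, matchings in the gadgets and paths only certify $\tau(G_1)\geq 8|E(H)|$. The $\tau(H)$ term, which you leave to an unspecified exchange argument, follows from a direct count. If neither $u$ nor $v$ is in $S$, then $a,d\in S$ and one of $b,c'$ is in $S$, so that path interior costs $3$ rather than $2$. Hence $|S|\geq |S\cap V(H)|+m_S+8|E(H)|\geq \tau(H)+8|E(H)|$, where $m_S$ counts the edges of $H$ missed by $S\cap V(H)$.

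\textbf{Step 2: choice of $xy$.} Your rule for choosing $xy$ is both unattainable and the wrong condition.
\begin{enumerate}
\item \emph{Unattainable.} Equality in the count above forces every minimum cover of $G_1$ to meet each $K_{3,3}-e$ gadget in exactly $3$ vertices (each $Q_3-e$ gadget in exactly $4$). The only covers of that size are the two colour classes, so no minimum cover contains both ends of an internal gadget edge.
\item \emph{Wrong condition.} The upper bound $\tau(G)\leq\tau(G_1)+6$ holds for every choice: a minimum cover of $G_1$ contains $x$ or $y$, and you add a $6$-cover of $P$ containing the Petersen end of the other new edge. What actually needs care is the lower bound, which requires $\tau(G_1-xy)=\tau(G_1)$. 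Having some minimum cover contain both $x$ and $y$ does not imply this: every edge of $K_4$ has that property, yet deleting it lowers $\tau$ from $3$ to $2$.
\end{enumerate}
Also, no matching can certify the Petersen contribution: $|V(P)|=10$, so matchings give at most $5$, not $6$.

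\textbf{Repair.} Take $xy$ to be an internal edge of a $K_{3,3}-e$ gadget. The gadget minus $xy$ still has a perfect matching, so the Step 1 count applies verbatim to $G_1-xy$. Take $pq$ to be a spoke, so that $P-pq$ contains two disjoint $5$-cycles. Together with a bridgeless source, this gives $\tau(G)=\tau(H)+8|E(H)|+6$, a valid alternative reduction that uses a single Petersen copy instead of $|V(G)|$ star products.
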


Since snarks are bridgeless cubic simple graphs with $\chi'(G)=4$, Theorem~\ref{thm:npCompleteSnarks} implies that the Vertex Cover Problem remains NP-complete when restricted to the class of snarks. In order to prove Theorem~\ref{thm:npCompleteSnarks}, we provide a polynomial time reduction from the following known NP-complete problem stated in Theorem~\ref{thm:npCompleteUehara}.

\begin{theorem}[Uehara~\cite{uehara1996np}]
\label{thm:npCompleteUehara}
The following problem is NP-complete: for a given 3-connected cubic planar simple graph $G$ of girth greater than 3 and a positive integer $k$, is there a vertex cover of $G$ with size $k$?\qed 
\end{theorem}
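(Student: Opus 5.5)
Membership in NP is immediate: a set $C$ with $|C| = k$ is a certificate, and checking that every edge has an end in $C$ takes polynomial time. For hardness I plan to reduce from the \textsc{Vertex Cover Problem} on cubic planar graphs, which is NP-complete by Garey, Johnson and Stockmeyer~\cite{GAREY1976237}. Given a cubic planar $G$ with $m$ edges and an integer $k$, I will build in polynomial time a graph $G'$ that is cubic, planar, triangle-free and 3-connected. The construction should satisfy $\tau(G') = \tau(G) + m\cdot c$ for a fixed constant $c$, so that $(G,k)$ is a yes-instance if and only if $(G', k + mc)$ is.

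The construction has two layers. First, replace every edge $uv$ of $G$ by a path $u\,a_{uv}\,b_{uv}\,v$. It is standard, and provable with Lemma~\ref{lemma:gallai1959} by an exchange argument on independent sets, that subdividing an edge twice raises $\tau$ by exactly one. Repeating this over all edges gives a graph with vertex cover number $\tau(G) + m$ in which every triangle of $G$ has become a cycle of length at least $9$.

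The new vertices $a_{uv}, b_{uv}$ have degree $2$, so the second layer restores cubicity with a fixed planar, triangle-free gadget $K$. The gadget has two distinguished degree-$2$ vertices $x, y$ and all other vertices of degree $3$; add the edges $a_{uv}x$ and $b_{uv}y$ to a private copy of $K$. The property I want from $K$ is that some minimum vertex cover of $K$ contains both $x$ and $y$. If so, that cover also covers the two attaching edges for free. It then combines with any cover of the doubly subdivided graph, and conversely every cover of $G'$ restricts to a cover of each copy of $K$. Using Lemma~\ref{lemma:VCsubgraph} for the lower bound, this gives $\tau(G') = \tau(G) + m(1 + \tau(K))$, so $c = 1 + \tau(K)$.

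My first candidate for $K$ is a triangle-free planar graph such as a suitably chosen cube-like or prism-like graph with one edge deleted. I will check by hand, using Lemma~\ref{lemma:minMaxCover} with a perfect matching for the lower bound, that a minimum cover through both endpoints of the deleted edge exists. If the cube fails this test, I will enlarge the gadget, for example to a hexagonal prism, until it passes. The copy of $K$ can be drawn inside a face adjacent to the path $a_{uv} b_{uv}$, so planarity is preserved, and girth at least $4$ follows because neither the gadget nor the subdivisions create triangles.

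The step I expect to be the main obstacle is 3-connectivity. The doubly subdivided edges with a gadget hanging off them create potential 2-cuts, for instance $\{u, b_{uv}\}$, and $\{a_{uv}, b_{uv}\}$ separates the gadget from the rest of the graph. My first attempt at a fix is to attach each gadget by three or more edges rather than two. This can be done by linking each gadget also to the neighbouring edge-paths around a common face, using a planar embedding of $G$. I would then verify 3-connectivity by showing that deleting any two vertices leaves the graph connected, with a case analysis on whether the deleted vertices lie in original vertices, subdivision vertices, or gadgets. The extra attachments must also keep the additive formula for $\tau$, which again needs the "minimum cover through the ports" property of $K$, now for more than two ports. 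If 3-connectivity cannot be secured this way, I would first reduce to 2-connected (or 3-connected) cubic planar instances, using known refinements of~\cite{GAREY1976237}, before applying the gadget replacement.
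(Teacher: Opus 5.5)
The paper does not prove this statement; it imports it from Uehara's report, so your argument has to stand on its own. Your $\tau$-bookkeeping is sound. Double subdivision adds exactly one per edge. Whenever every attaching edge ends at a port of a gadget that has a minimum cover through all its ports, Lemma~\ref{lemma:VCsubgraph} together with the combined cover gives $\tau(G')=\tau(G)+m+m\,\tau(K)$.

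The gadgets you name, however, cannot have that property. Suppose $K$ has $n$ vertices, two ports $x,y$ of degree $2$, and all other vertices of degree $3$. Any cover $C\ni x,y$ satisfies $3|C|-2\ge |E(K)|=(3n-2)/2$, so $|C|\ge n/2+1$. The cube or the hexagonal prism minus an edge is bipartite with a perfect matching avoiding the deleted edge. Hence $\tau(K)=n/2$ and no minimum cover contains both ports. Enlarging bipartite prisms will never pass your test, and a perfect-matching lower bound via Lemma~\ref{lemma:minMaxCover} can never certify a valid gadget. You need $\tau(K)\ge n/2+1$. For instance, the pentagonal prism with the rung $o_0i_0$ deleted works: its two disjoint $5$-cycles force $\tau(K)\ge 6$, and $\{o_0,o_1,o_3,i_0,i_2,i_4\}$ is a cover of size $6$ through both ports, which lie on a common face.

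The genuine gap is 3-connectivity, which is exactly what this theorem adds to the Garey--Johnson--Stockmeyer result. With two attaching edges, $\{a_{uv},b_{uv}\}$ is a $2$-vertex cut and $\{a_{uv}x,b_{uv}y\}$ is a $2$-edge cut for every edge $uv$. So $G'$ is never 3-connected, whatever the connectivity of $G$. Moreover, every cut vertex or $2$-cut of $G$ survives in $G'$, because each gadget hangs on a single edge. For the same reason, your fallback of starting from 3-connected instances does not repair the construction, and it would presuppose essentially the theorem without the girth condition.

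The face-linking fix is only a sketch. You do not say how many subdivision vertices each edge receives. You do not say which multi-port gadgets admit a minimum cover through all $p$ ports; with $p$ degree-$2$ ports, the same count forces $\tau(K)\ge n/2+p/6$. You do not say how cubicity and planarity are preserved. Nor do you carry out the case analysis excluding $2$-cuts; for cubic graphs it would suffice to exclude edge cuts of size at most $2$. As it stands, the proposal establishes NP-completeness only for cubic planar graphs of girth greater than $3$, without the 3-connectivity the statement requires.
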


Next, we present our polynomial time reduction. 

\medskip 

\noindent \textbf{Construction of graph $G_{\mathcal{H}}$ from $G$.} Let $G$ be an arbitrary 3-connected cubic planar simple graph of girth greater than 3. 
Let the \emph{gadget} $\mathcal{H}$ be the graph defined in Figure~\ref{fig:gadget}. The degree-2 vertices of $\mathcal{H}$ are called \emph{border vertices}.
A new graph $G_{\mathcal{H}}$ is built as follows: $G_{\mathcal{H}}$ contains a disjoint copy $\mathcal{H}_w$ of $\mathcal{H}$, for each vertex $w$ of $G$.  Two copies $\mathcal{H}_u$ and $\mathcal{H}_v$ of $\mathcal{H}$ are joined by an edge whenever the
corresponding vertices $u$ and $v$ are adjacent in $G$, so that there is a one-to-one correspondence between the set of edges of $G$ and the set of edges of $G_{\mathcal{H}}$ that connect two copies of $\mathcal{H}$ (the edge that connects $\mathcal{H}_u$ to  $\mathcal{H}_v$ has as endpoints a border vertex of $\mathcal{H}_u$ and a border vertex of $\mathcal{H}_v$). We call the edges connecting copies of $\mathcal{H}$
\emph{connecting edges} of $G_{\mathcal{H}}$. The construction of $G_{\mathcal{H}}$ can clearly be done in
polynomial time in the order of $G$. Moreover, the size of $G_{\mathcal{H}}$ is also polynomial in the size of $G$. 

\medskip 

Note that, by the construction, the graph $G_{\mathcal{H}}$ is cubic. The insertion of gadgets does not introduce bridges in the constructed  graph $G_{\mathcal{H}}$ since $\mathcal{H}$ itself has no bridges. Moreover, $\mathcal{H}$ has girth 5 and the initial graph $G$ has girth greater than 3. Therefore, the construction graph $G_{\mathcal{H}}$ is a bridgeless cubic simple graph of girth greater than 3. 

In Theorem~\ref{thm:npcompleteVC}, we show that, if one can determine the vertex cover number of $G$, then it can immediately obtain the vertex cover number of $G_{\mathcal{H}}$, and vice-versa. This shows that the problem is NP-hard. However, in order to prove Theorem~\ref{thm:npcompleteVC}, we need the next two auxiliary lemmas, that present useful properties of the gadget $\mathcal{H}$.

\begin{figure}
\centering

\begin{tikzpicture}[
    vertex/.style={circle, fill=black, inner sep=2pt},
    edge/.style={thick}
]

\def\R{1.3}

\def\Rbig{2.5}

\foreach \i in {1,...,6} {
    \node[
        vertex,
        label={[font=\small] {90-60*(\i-1)}:$u_{\i}$}
    ] (u\i) at ({90-60*(\i-1)}:\Rbig) {};
}

\foreach \i in {1,...,6} {
    \pgfmathtruncatemacro{\next}{mod(\i,6)+1}
    \draw[edge] (u\i) -- (u\next);
}


\foreach \i in {1,...,9} {

    \ifnum\i=1
        \node[vertex,
        label={[font=\small] 140:$v_{\i}$}]
        (v\i) at ({90-40*(\i-1)}:\R) {};
    \else\ifnum\i=4
        \node[vertex,
        label={[font=\small] 7:$v_{\i}$}]
        (v\i) at ({90-40*(\i-1)}:\R) {};
    \else\ifnum\i=7
        \node[vertex,
        label={[font=\small] 160:$v_{\i}$}]
        (v\i) at ({90-40*(\i-1)}:\R) {};
    \else
        \node[vertex,
        label={[font=\small] {90-40*(\i-1)}:$v_{\i}$}]
        (v\i) at ({90-40*(\i-1)}:\R) {};
    \fi\fi\fi
}

\foreach \i in {1,...,9} {
    \pgfmathtruncatemacro{\next}{mod(\i,9)+1}
    \draw[edge] (v\i) to[bend left=8] (v\next);
}

\draw[edge] (v2) to[out=270,in=30] (v6);
\draw[edge] (v3) to[out=150,in=30] (v8);
\draw[edge] (v5) to[out=150,in=-90] (v9);

\draw[edge] (v1) -- (u1);
\draw[edge] (v4) -- (u3);
\draw[edge] (v7) -- (u5);

\end{tikzpicture}

\caption{Gadget $\mathcal{H}$. The vertices of degree two ($u_2,u_4,u_6$) are the border vertices.}
\label{fig:gadget}
\end{figure}

\begin{lemma}
\label{lemma:VCgadget}
The gadget $\mathcal{H}$ has $\tau(\mathcal{H})=8$.
\end{lemma}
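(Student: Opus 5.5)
We prove $\tau(\mathcal{H})=8$ by matching a lower bound from two vertex-disjoint subgraphs with an explicit vertex cover of size $8$.

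\textbf{Structure of $\mathcal{H}$.} Reading from Figure~\ref{fig:gadget}, $\mathcal{H}$ has $15$ vertices and $21$ edges:
\begin{itemize}
\item the outer $6$-cycle $u_1u_2\cdots u_6u_1$;
\item the inner $9$-cycle $v_1v_2\cdots v_9v_1$ together with the chords $v_2v_6$, $v_3v_8$ and $v_5v_9$, which is the Petersen graph of Figure~\ref{fig:petersen1} with its vertex $c$ deleted;
\item the three linking edges $v_1u_1$, $v_4u_3$ and $v_7u_5$.
\end{itemize}

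\textbf{Lower bound.} Let $G_1$ be the outer $6$-cycle and $G_2$ the inner $9$-cycle. These are vertex-disjoint subgraphs of $\mathcal{H}$. By Lemma~\ref{lemma:cyclesVC}, $\tau(G_1)=\lceil 6/2\rceil=3$ and $\tau(G_2)=\lceil 9/2\rceil=5$. Lemma~\ref{lemma:VCsubgraph} then gives $\tau(\mathcal{H})\geq 3+5=8$. This uses only the odd inner cycle, so we do not need the independence number of the Petersen graph.

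\textbf{Upper bound.} We exhibit a vertex cover of size $8$. Take $C=\{u_1,u_3,u_5\}\cup\{v_2,v_4,v_6,v_8,v_9\}$ and check each group of edges.
\begin{itemize}
\item The outer cycle: $\{u_1,u_3,u_5\}$ alternates around it, so every edge $u_iu_{i+1}$ is covered.
\item The linking edges: $v_1u_1$, $v_4u_3$ and $v_7u_5$ each have their $u$-endpoint in $C$.
\item The inner cycle: $v_2,v_4,v_6,v_8$ cover every edge $v_iv_{i+1}$ with $1\le i\le 8$. The remaining edge $v_9v_1$ is covered by $v_9$.
\item The chords: $v_2v_6$ is covered by $v_2$, $v_3v_8$ by $v_8$, and $v_5v_9$ by $v_9$.
\end{itemize}
Hence $C$ is a vertex cover with $|C|=8$, so $\tau(\mathcal{H})\le 8$.

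Combining the two bounds gives $\tau(\mathcal{H})=8$.

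The only step needing care is choosing the fifth inner vertex so that the closing edge $v_9v_1$ and all three chords are covered at once. Choosing $v_9$ does this, and the verification is a finite check.
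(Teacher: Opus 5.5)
Your proof is correct and follows the paper's argument. You use the same lower bound from the vertex-disjoint $6$-cycle and $9$-cycle via Lemmas~\ref{lemma:VCsubgraph} and~\ref{lemma:cyclesVC}, and an explicit cover of size $8$ for the upper bound; your cover $\{u_1,u_3,u_5,v_2,v_4,v_6,v_8,v_9\}$ differs from the paper's $\{u_1,u_3,u_5,v_2,v_3,v_5,v_7,v_9\}$ (Figure~\ref{fig:vcGadget1}) but checks out edge by edge, and you rightly treat the $9$-cycle as a (non-induced, since it has chords) subgraph, which is all Lemma~\ref{lemma:VCsubgraph} requires.
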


\begin{proof}
The gadget $\mathcal{H}$ has two vertex-disjoint induced cycles: a 6-cycle induced by $\{u_1,u_2,\ldots,u_6\}$ and a 9-cycle induced by $\{v_1,v_2,\ldots,v_9\}$. Thus, by Lemmas~\ref{lemma:VCsubgraph} and~\ref{lemma:cyclesVC}, we obtain that $\tau(\mathcal{H}) \geq \tau(C_6)+\tau(C_9) = 3 + 5 = 8$. To conclude, a vertex cover of $\mathcal{H}$ with cardinality 8 is shown in Figure~\ref{fig:vcGadget1}.
\end{proof}

\begin{lemma}
\label{lemma:VCgadget2}
Let $\mathcal{H}$ be the gadget shown in Figure~\ref{fig:gadget}. If $S$ is a minimum vertex cover of $\mathcal{H}$, then none of the border vertices $u_2$, $u_4$, and $u_6$ belong to the set $S$.
\end{lemma}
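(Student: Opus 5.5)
Fix a minimum vertex cover $S$ of $\mathcal{H}$, so $|S|=8$ by Lemma~\ref{lemma:VCgadget}. The plan is to exploit the tightness of the lower bound used in that lemma. The $6$-cycle $U$ induced by $\{u_1,\dots,u_6\}$ and the $9$-cycle $V$ induced by $\{v_1,\dots,v_9\}$ are vertex-disjoint, and they need at least $3$ and $5$ vertices respectively (Lemma~\ref{lemma:cyclesVC}). Since $|S|=8$, we must have $|S\cap U|=3$ and $|S\cap V|=5$ exactly.

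First consider the $6$-cycle. A vertex cover of $C_6$ with exactly $3$ vertices must be one of the two alternating classes. Indeed, the complement is an independent set of size $3$ in $C_6$, and the only such sets are $\{u_1,u_3,u_5\}$ and $\{u_2,u_4,u_6\}$. Hence $S\cap U$ is either $\{u_1,u_3,u_5\}$ or $\{u_2,u_4,u_6\}$. It therefore suffices to rule out the second option, i.e.\ to show that $S\cap U=\{u_2,u_4,u_6\}$ is impossible.

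Suppose $S\cap U=\{u_2,u_4,u_6\}$. Then $u_1,u_3,u_5\notin S$, so the edges $v_1u_1$, $v_4u_3$ and $v_7u_5$ force $v_1,v_4,v_7\in S$. We now need a contradiction with $|S\cap V|=5$. The subgraph induced by $V$ is the Petersen graph minus one vertex (compare with Figure~\ref{fig:petersen1}, the centre $c$ being removed). Its edges are the $9$-cycle together with the chords $v_2v_6$, $v_3v_8$, $v_5v_9$.

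The key step is to show that no vertex cover of this $9$-vertex graph of size $5$ contains all of $v_1,v_4,v_7$. Equivalently, no independent set of size $4$ avoids $\{v_1,v_4,v_7\}$. After deleting $v_1,v_4,v_7$, the remaining vertices $v_2,v_3,v_5,v_6,v_8,v_9$ carry the edges $v_2v_3$, $v_5v_6$, $v_8v_9$ (cycle edges) and the chords $v_2v_6$, $v_3v_8$, $v_5v_9$. These six edges form a $6$-cycle $v_2v_3v_8v_9v_5v_6v_2$, whose independence number is $3<4$. This is the contradiction.

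The main obstacle is this last verification. Reading the adjacencies correctly from the figure is essential, and I will double-check that the six remaining vertices really form a single $6$-cycle. If instead they formed a graph with a larger independent set, I would fall back to a direct case analysis on which two further vertices of $V$ lie in $S$.
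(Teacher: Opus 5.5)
Your proposal is correct and follows essentially the same argument as the paper. Both force $|S\cap U|=3$ and $|S\cap V|=5$, reduce to the case $S\cap U=\{u_2,u_4,u_6\}$, deduce $v_1,v_4,v_7\in S$, and get a contradiction from the residual $6$-cycle $v_2v_3v_8v_9v_5v_6$, whose adjacencies you read off correctly. The only difference is cosmetic: you classify $S\cap U$ directly as one of the two colour classes of the $6$-cycle, so all three border vertices are handled at once without the paper's appeal to the symmetry of $\mathcal{H}$.
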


\begin{proof}
Let $S$ be a minimum vertex cover for the graph $\mathcal{H}$. By Lemma~\ref{lemma:VCgadget}, we have that $|S|=8$. 
Suppose, for a contradiction, that $u_2 \in S$. We know that the 6-cycle induced by the vertices in $\{u_1,\ldots,u_6\}$ must be covered by at least 3 vertices and the 9-cycle induced by the vertices in $\{v_1,\ldots,v_9\}$ must be covered by at least 5 vertices. These facts imply that exactly two other vertices of the 6-cycle must belong to $S$, which implies that $u_4,u_6 \in S$ in order to cover all the edges of the outer 6-cycle. Note that the edges $u_1v_1$, $u_3v_4$ and $u_5v_7$ are not covered by none of the vertices $u_2,u_4,u_6$ and, then, must be covered by vertices of the inner 9-cycle, which implies that $v_1,v_4,v_7 \in S$. At this point, there exists in $\mathcal{H}$ a set of 6 edges inducing a 6-cycle, none of which are yet covered; the edges are $\{v_8v_9,v_9v_5,v_5v_6,v_6v_2,v_2v_3,v_3v_8\}$. By Lemma~\ref{lemma:cyclesVC}, this 6-cycle requires at least 3 more vertices to be covered, thus contradicting the fact that $|S| = 8$. Therefore, $u_2 \not\in S$. By the symmetry of the gadget $\mathcal{H}$, the same conclusion applies to the other border vertices $u_4$ and $u_6$.
\end{proof}

Now, we are ready to prove Theorem~\ref{thm:npcompleteVC}.

\begin{theorem}
\label{thm:npcompleteVC}
Let $G$ be a 3-connected cubic simple planar graph of girth greater than 3. Let $G_{\mathcal{H}}$ be the bridgeless cubic simple graph of girth greater than 3 obtained from $G$ by replacing every vertex $v \in V(G)$ for a copy $\mathcal{H}_v$ of  the gadget $\mathcal{H}$. Then, $\tau(G_{\mathcal{H}}) = 8|V(G)|+\tau(G)$.
\end{theorem}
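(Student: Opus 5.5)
The plan is to prove the two inequalities $\tau(G_{\mathcal{H}}) \leq 8|V(G)|+\tau(G)$ and $\tau(G_{\mathcal{H}}) \geq 8|V(G)|+\tau(G)$ separately. Both rest on Lemmas~\ref{lemma:VCgadget} and~\ref{lemma:VCgadget2} and on one further fact about $\mathcal{H}$. Throughout, recall that since $G$ is cubic, each copy $\mathcal{H}_w$ has exactly three border vertices. Each is the endpoint of exactly one connecting edge, corresponding to one of the three edges of $G$ at $w$.

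\textbf{Upper bound.} First I need a vertex cover of $\mathcal{H}$ of size $9$ that contains all three border vertices. The proof of Lemma~\ref{lemma:VCgadget2} essentially builds one. Take $u_2,u_4,u_6$, which cover the outer 6-cycle. Add $v_1,v_4,v_7$, which cover the spokes and the 9-cycle edges at those vertices. The remaining uncovered edges form the 6-cycle $v_8v_9v_5v_6v_2v_3v_8$. Finally add three alternate vertices of it, say $v_9,v_6,v_3$. This gives $9$ vertices in total; I will check that every edge is covered.

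Now let $C$ be a minimum vertex cover of $G$. In $\mathcal{H}_w$, take the $9$-vertex cover above if $w\in C$, and a minimum ($8$-vertex) cover otherwise. The edges inside every gadget are covered. Each connecting edge corresponds to an edge $uw$ of $G$ with, say, $u\in C$, so its endpoint in $\mathcal{H}_u$ is a border vertex and hence is chosen. The total size is $8|V(G)|+|C| = 8|V(G)|+\tau(G)$.

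\textbf{Lower bound.} Let $S$ be a minimum vertex cover of $G_{\mathcal{H}}$ and set $S_w = S\cap V(\mathcal{H}_w)$. Each $S_w$ covers all edges of $\mathcal{H}_w$, so $|S_w|\geq 8$ by Lemma~\ref{lemma:VCgadget}. Define $C=\{w\in V(G) : |S_w|\geq 9\}$, so that $|S|\geq 8|V(G)|+|C|$. It then suffices to show that $C$ is a vertex cover of $G$, since that gives $|C|\ge\tau(G)$.

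Suppose instead that some edge $uw$ of $G$ has $u,w\notin C$. Then $|S_u|=|S_w|=8$, so $S_u$ and $S_w$ are minimum vertex covers of their gadgets. By Lemma~\ref{lemma:VCgadget2}, neither contains a border vertex. But the connecting edge corresponding to $uw$ joins two border vertices, so it is uncovered by $S$, a contradiction.

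The only step requiring genuine care is the upper bound, namely confirming that a $9$-vertex cover of $\mathcal{H}$ containing all border vertices exists. I expect this to be the main (though mild) obstacle, and it reduces to checking the explicit set above against the edge list of $\mathcal{H}$. The lower bound is a direct application of Lemma~\ref{lemma:VCgadget2}.
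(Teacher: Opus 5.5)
Your proposal is correct and follows the paper's proof. Your 9-vertex cover $\{u_2,u_4,u_6,v_1,v_3,v_4,v_6,v_7,v_9\}$ is exactly the one the paper uses for gadgets of vertices in $C$, and your lower bound rests on Lemma~\ref{lemma:VCgadget2} in the same way; defining $C=\{w : |S_w|\geq 9\}$ is in fact a cleaner, fully rigorous formalization of the paper's somewhat informal claim that the extra $\tau(G)-k$ vertices are border vertices inducing a vertex cover of $G$.
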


\begin{proof}
Let $G$ and $G_{\mathcal{H}}$ be as declared in the statement. In order to show that $\tau(G_{\mathcal{H}}) \leq  8|V(G)|+\tau(G)$, we construct a vertex cover $S$ for $G_{\mathcal{H}}$ such that $|S| \leq 8|V(G)|+\tau(G)$. Let $C \subseteq V(G)$ be a minimum vertex cover of $G$, i.e., $|C| = \tau(G)$. Recall that each vertex $w$ of $G$ is replaced by a copy $\mathcal{H}_w$ of the gadget $\mathcal{H}$ so as to form the graph $G_{\mathcal{H}}$. If $w \not\in C$, then we choose the 8 vertices $\{u_1,u_3,u_5,v_2,v_3,v_5,v_7,v_9\} \subset V(\mathcal{H}_w)$ to compose the vertex cover $S$ (see Figure~\ref{fig:vcGadget1}). On the other hand, if $w \in C$, then we choose the 9 vertices $\{u_2,u_4,u_6,v_1,v_3,v_4,v_6,v_7,v_9\} \subset V(\mathcal{H}_w)$ to compose the vertex cover $S$ (see Figure~\ref{fig:vcGadget2}). From Figures~\ref{fig:vcGadget1} and~\ref{fig:vcGadget2}, it can easily be checked that these chosen vertices cover each of the copies $\mathcal{H}_w$. In order to prove that the set $S$ just defined is a vertex cover of $G_{\mathcal{H}}$, it remains to show that an arbitrary edge connecting two distinct gagdets  $\mathcal{H}_x$ and $\mathcal{H}_y$ is covered by $S$. 
Let $xy \in E(G)$ and let $\mathcal{H}_x$ and $\mathcal{H}_y$ be the gadgets of $G_{\mathcal{H}}$ corresponding to the vertices $x,y \in V(G)$. 
From the definition of the construction, the edge connecting  $\mathcal{H}_x$ and $\mathcal{H}_y$ links a border vertex $u_i \in V(\mathcal{H}_x)$ to a border vertex $u_j \in V(\mathcal{H}_y)$, for $i,j \in\{2,4,6\}$. 
Moreover, from the construction of $S$, at least one of the gadgets $\mathcal{H}_x$ and $\mathcal{H}_y$ came from a vertex of $G$ belonging to the vertex cover $C$. This implies that at least one of these gadgets is covered by the vertex cover illustrated in Figure~\ref{fig:vcGadget2} and, hence, at least one of the vertices $u_i$ and $u_j$ belongs to the vertex cover $S$. Therefore, the edge connecting $\mathcal{H}_x$ and $\mathcal{H}_y$ is covered by $S$. The order of the vertex cover $S$ is given by $|S| = 9|C| + 8(|V(G)|-|C|) = 8|V(G)|+|C|$. Therefore, $\tau(G_{\mathcal{H}}) \leq |S| =  8|V(G)|+\tau(G)$.

Next, we show that $\tau(G_{\mathcal{H}}) \geq   8|V(G)|+\tau(G)$. Let $S$ be a minimum vertex cover of $G_{\mathcal{H}}$. 
Suppose, for the sake of contradiction, that $|S| = 8|V(G)|+\tau(G)-k$, for some $k \geq 1$. 
By the construction, $G_{\mathcal{H}}$ contains $|V(G)|$ vertex-disjoint copies of the gagdet $\mathcal{H}$ and each one of these copies needs at least 8 vertices to be covered (see Lemma~\ref{lemma:VCgadget}).   
However, by Lemma~\ref{lemma:VCgadget2}, in each copy of the gadget $\mathcal{H}$ that is covered by exactly 8 vertices of $S$, the border vertices of the copy do not belong to the vertex cover $S$. So, it is not possible to have all the gadgets covered by exactly 8 vertices, since the edges connecting two distinct gadgets would not be covered. Therefore, the additional $\tau(G)-k$ vertices of $S$ are responsible to cover the edges that link the vertex-disjoint gadgets of $G_{\mathcal{H}}$ (these vertices are necessarily border vertices). However, from these $\tau(G)-k$ vertices, we obtain a vertex cover $W$ of $G$ with size $\tau(G)-k$ (each vertex $x\in V(G)$ whose corresponding gadget $\mathcal{H}_x$ has a border vertex in the set $S$ belongs to $W$), which is a contradiction. Therefore, $\tau(G_{\mathcal{H}}) \geq   8|V(G)|+\tau(G)$.

From the lower and upper bounds, we conclude that $\tau(G_{\mathcal{H}}) = 8|V(G)|+\tau(G)$.
\end{proof}

In order to conclude the proof of Theorem~\ref{thm:npCompleteSnarks}, it remains to show that the constructed graph $G_{\mathcal{H}}$ is a snark. In order to do this, we need an auxiliary definition.

Let $G$ and $H$ be two vertex-disjoint cubic graphs. Let $u \in V(G)$ and $v \in V(H)$ be arbitrary vertices, and let $N_{G}(u) = \{w_1,w_2,w_3\}$ and $N_{H}(v) = \{z_1,z_2,z_3\}$. 
As defined by Holton et al.~\cite{holton1993petersen}, a  \emph{star product}, denoted by $G \star H$, of the two cubic graphs $G$ and $H$ is a cubic graph obtained by deleting the vertices $u$ and $v$ from $G$ and $H$, respectively, and by adding the edges $w_1z_1, w_2z_2, w_3z_3$. A scheme of the star product operation is shown in Figure~\ref{fig:starProduct}.

\begin{figure}
\centering
\begin{subfigure}[b]{0.3\textwidth}
\centering
\begin{tikzpicture}[scale=.5]

\tikzset{
    vertex/.style={circle, fill=black, inner sep=1.5pt},
    edge/.style={dotted, thick}
}

\draw (-3,0) ellipse (1.2 and 2);

\node at (-3,-2.6) {$G$};

\node[vertex,label=left:{\scriptsize $w_1$}] (g1) at (-2.8,1) {};
\node[vertex,label=left:{\scriptsize $w_2$}] (g2) at (-2.8,0) {};
\node[vertex,label=left:{\scriptsize $w_3$}] (g3) at (-2.8,-1) {};

\node[vertex,label={$x$}] (x) at (-0.8,0) {};

\draw[edge] (g1) -- (x);
\draw[edge] (g2) -- (x);
\draw[edge] (g3) -- (x);

\draw (3,0) ellipse (1.2 and 2);

\node at (3,-2.6) {$H$};

\node[vertex,label=right:{\scriptsize $z_1$}] (h1) at (2.8,1) {};
\node[vertex,label=right:{\scriptsize $z_2$}] (h2) at (2.8,0) {};
\node[vertex,label=right:{\scriptsize $z_3$}] (h3) at (2.8,-1) {};

\node[vertex,label={$y$}] (y) at (0.8,0) {};

\draw[edge] (h1) -- (y);
\draw[edge] (h2) -- (y);
\draw[edge] (h3) -- (y);

\end{tikzpicture}

\caption{Delete a vertex of $G$ and $H$.}
\label{fig:starProduct1}
\end{subfigure}
\begin{subfigure}[b]{0.6\textwidth}
\centering
\begin{tikzpicture}[scale=.5]

\tikzset{
    vertex/.style={circle, fill=black, inner sep=1.5pt},
    edge/.style={thick}
}

\draw (-3,0) ellipse (1.2 and 2);

\node at (-3,-2.6) {$G$};

\node[vertex,label=left:{\scriptsize $w_1$}] (g1) at (-2.8,1) {};
\node[vertex,label=left:{\scriptsize $w_2$}] (g2) at (-2.8,0) {};
\node[vertex,label=left:{\scriptsize $w_3$}] (g3) at (-2.8,-1) {};

\draw (3,0) ellipse (1.2 and 2);

\node at (3,-2.6) {$H$};

\node[vertex,label=right:{\scriptsize $z_1$}] (h1) at (2.8,1) {};
\node[vertex,label=right:{\scriptsize $z_2$}] (h2) at (2.8,0) {};
\node[vertex,label=right:{\scriptsize $z_3$}] (h3) at (2.8,-1) {};

\draw[edge] (h1) to (g1);
\draw[edge] (h2) to (g2);
\draw[edge] (h3) to (g3);

\end{tikzpicture}

\caption{Add edges so as to turn the resulting graph cubic again.}
\label{fig:starProduct2}
\end{subfigure}
\caption{Schematization of the star product operation $G\star H$ between two cubic graphs $G$ and $H$.}
\label{fig:starProduct}
\end{figure}

The next lemma is a useful result about star products.

\begin{lemma}[Issacs~\cite{Isaacs1975}, Holton et al.~\cite{holton1993petersen}]
\label{lemma:starProduct1}
If a cubic graph $F$ is obtained by the star product of two bridgeless cubic simple graphs $G$ and $H$, such that at least one of $G$ or $H$ is a snark, then $F$ itself is also a snark.\qed 
\end{lemma}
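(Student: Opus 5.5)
The plan is to check the defining properties of a snark one at a time for $F = G \star H$: cubic, simple, connected, bridgeless, and $\chi'(F)=4$. Throughout, $u\in V(G)$ and $v\in V(H)$ are the deleted vertices and $w_1z_1,w_2z_2,w_3z_3$ are the three new edges. I will call these the \emph{connecting edges}.

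\textbf{Cubic and simple.} Each $w_i$ loses its edge to $u$ and gains exactly one edge $w_iz_i$, and symmetrically for each $z_i$, so $F$ is cubic. The $w_i$ are pairwise distinct, as are the $z_i$, and the new edges join $V(G)-u$ to $V(H)-v$. Hence no loops or parallel edges arise.

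\textbf{Connected and bridgeless.} A bridgeless cubic graph is $2$-connected: in a cubic graph a cut vertex forces a bridge. So $G-u$ and $H-v$ are connected, and since the connecting edges join them, $F$ is connected. Now suppose $e$ is a bridge of $F$, splitting it into sides $A$ and $B$.
\begin{itemize}
\item If $e$ is a connecting edge, say $w_1z_1$, then $F-e$ still contains the connected graphs $G-u$ and $H-v$ joined by $w_2z_2$. So $F-e$ is connected, a contradiction.
\item If $e$ lies in $G-u$, then $H-v$ together with its three connecting edges is connected and avoids $e$, so it lies entirely in one side, say $B$. Contracting $V(H)-v$ back to a single vertex turns $F$ into $G$, with that vertex playing the role of $u$. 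The partition $A$, $B$ then shows that $e$ is a bridge of $G$, a contradiction.
\item The case $e$ in $H-v$ is symmetric.
\end{itemize}

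\textbf{Non-3-edge-colourability.} This is the main step, and it rests on the standard parity lemma. Take a proper edge $3$-colouring of a cubic graph and a vertex set $X$. Each colour class is a perfect matching, so counting the vertices of $X$ covered by colour $c$ gives $|X| \equiv |\delta(X)\cap c| \pmod 2$ for every colour $c$. In particular, all three colours have the same parity of occurrences in $\delta(X)$.

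Assume without loss of generality that $G$ is the snark, and suppose $F$ has a proper $3$-edge-colouring. Apply the parity lemma with $X=V(G)-u$. The cut $\delta(X)$ consists of exactly the three connecting edges. The three colour counts have equal parity and sum to $3$, so each colour appears an odd number of times, hence exactly once. Therefore the connecting edges receive three distinct colours.

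Now keep the colouring on $G-u$ and give each edge $uw_i$ the colour of $w_iz_i$. The three edges at $u$ then have distinct colours. At each $w_i$, the edge $uw_i$ has the colour previously carried by $w_iz_i$, so the colouring stays proper there. This yields a proper $3$-edge-colouring of $G$, contradicting $\chi'(G)=4$. Hence $\chi'(F)\ge 4$, and Vizing's theorem gives $\chi'(F)=4$.

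I expect the only delicate points to be the parity argument, which forces distinct colours on the $3$-edge cut, and the contraction argument for bridgelessness. Everything else is bookkeeping.
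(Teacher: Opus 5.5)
Your argument is correct. The paper gives no proof of this lemma; it simply cites Isaacs and Holton et al. Your argument is essentially the standard one from those sources. The parity lemma forces the three edges of the $3$-edge-cut $\delta(V(G)-u)$ to receive three distinct colours. That lets you transplant the colouring onto the star at $u$ and contradict $\chi'(G)=4$, while the cubic, simple and bridgeless checks are routine. Writing it out makes the section self-contained. It also shows exactly which hypotheses do what: bridgelessness of both factors is used only for bridgelessness of $F$, and the colouring step needs only that the snark factor is not $3$-edge-colourable.

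One point needs to be made explicit. Your step ``a bridgeless cubic graph is $2$-connected'' silently assumes connectedness, and the lemma as literally stated needs that assumption too. Suppose the non-snark factor $H$ were disconnected, e.g.\ two disjoint copies of $K_4$, which is bridgeless, cubic and simple. Then $F$ would still contain an entire component of $H$, so it would be disconnected and hence not a snark. You should therefore add the hypothesis that $G$ and $H$ are connected. This costs nothing in the paper's application, since both the Petersen graph and $G_{\mathcal{H}}^-$ are connected.
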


Let $\mathcal{H}$ be the gadget shown in Figure~\ref{fig:gadget}. Note that the subgraph of $\mathcal{H}$ induced by the vertices from the set $W = \{v_1,v_2,\ldots,v_9\}$ is the Petersen graph $P$, shown in Figure~\ref{fig:petersen1}, with the vertex $c$ deleted. That is, $P-c = G[W]$. 

Let $G_{\mathcal{H}}^-$ be the cubic graph obtained from $G_{\mathcal{H}}$ by replacing each subgraph $G[W]$ by a new vertex in all copies of $\mathcal{H}$. The graph $G_{\mathcal{H}}$ is obtained
by $|V(G)|$ applications of the star product of the Petersen graph $P$ and $G_{\mathcal{H}}^-$. By Lemma~\ref{lemma:starProduct1}, since the Petersen graph is a snark, the graph $G_{\mathcal{H}}$ is a snark. This concludes the proof of Theorem~\ref{thm:npCompleteSnarks}.

\begin{figure}
\centering
\begin{subfigure}[b]{0.4\textwidth}
\centering
\begin{tikzpicture}[
    vertexB/.style={circle, draw=black, fill=black, inner sep=2pt},
    vertexW/.style={circle, draw=black, fill=white, inner sep=2pt},
    edge/.style={thick}
]

\def\R{1.3}

\def\Rbig{2.5}

\foreach \i in {1,...,6} {

    \ifodd\i
        \def\ustyle{vertexB}
    \else
        \def\ustyle{vertexW}
    \fi

    \node[
        \ustyle,
        label={[font=\small] {90-60*(\i-1)}:$u_{\i}$}
    ] (u\i) at ({90-60*(\i-1)}:\Rbig) {};
}

\foreach \i in {1,...,6} {
    \pgfmathtruncatemacro{\next}{mod(\i,6)+1}
    \draw[edge] (u\i) -- (u\next);
}


\foreach \i in {1,...,9} {

    \ifnum\i=2 \def\vstyle{vertexB}
    \else\ifnum\i=3 \def\vstyle{vertexB}
    \else\ifnum\i=5 \def\vstyle{vertexB}
    \else\ifnum\i=7 \def\vstyle{vertexB}
    \else\ifnum\i=9 \def\vstyle{vertexB}
    \else \def\vstyle{vertexW}
    \fi\fi\fi\fi\fi

    \ifnum\i=1
        \node[\vstyle,
        label={[font=\small] 140:$v_{\i}$}]
        (v\i) at ({90-40*(\i-1)}:\R) {};
    \else\ifnum\i=4
        \node[\vstyle,
        label={[font=\small] 7:$v_{\i}$}]
        (v\i) at ({90-40*(\i-1)}:\R) {};
    \else\ifnum\i=7
        \node[\vstyle,
        label={[font=\small] 160:$v_{\i}$}]
        (v\i) at ({90-40*(\i-1)}:\R) {};
    \else
        \node[\vstyle,
        label={[font=\small] {90-40*(\i-1)}:$v_{\i}$}]
        (v\i) at ({90-40*(\i-1)}:\R) {};
    \fi\fi\fi
}

\foreach \i in {1,...,9} {
    \pgfmathtruncatemacro{\next}{mod(\i,9)+1}
    \draw[edge] (v\i) to[bend left=8] (v\next);
}

\draw[edge] (v2) to[out=270,in=30] (v6);
\draw[edge] (v3) to[out=150,in=30] (v8);
\draw[edge] (v5) to[out=150,in=-90] (v9);

\draw[edge] (v1) -- (u1);
\draw[edge] (v4) -- (u3);
\draw[edge] (v7) -- (u5);

\end{tikzpicture}

\caption{Vertex cover of size 8.}
\label{fig:vcGadget1}
\end{subfigure}
\begin{subfigure}[b]{0.4\textwidth}
\centering
\begin{tikzpicture}[
    vertexB/.style={circle, draw=black, fill=black, inner sep=2pt},
    vertexW/.style={circle, draw=black, fill=white, inner sep=2pt},
    edge/.style={thick}
]

\def\R{1.3}

\def\Rbig{2.5}

\foreach \i in {1,...,6} {

    \ifnum\i=2 \def\ustyle{vertexB}
    \else\ifnum\i=4 \def\ustyle{vertexB}
    \else\ifnum\i=6 \def\ustyle{vertexB}
    \else \def\ustyle{vertexW}
    \fi\fi\fi

    \node[
        \ustyle,
        label={[font=\small] {90-60*(\i-1)}:$u_{\i}$}
    ] (u\i) at ({90-60*(\i-1)}:\Rbig) {};
}

\foreach \i in {1,...,6} {
    \pgfmathtruncatemacro{\next}{mod(\i,6)+1}
    \draw[edge] (u\i) -- (u\next);
}


\foreach \i in {1,...,9} {

    \ifnum\i=1 \def\vstyle{vertexB}
    \else\ifnum\i=3 \def\vstyle{vertexB}
    \else\ifnum\i=4 \def\vstyle{vertexB}
    \else\ifnum\i=6 \def\vstyle{vertexB}
    \else\ifnum\i=7 \def\vstyle{vertexB}
    \else\ifnum\i=9 \def\vstyle{vertexB}
    \else \def\vstyle{vertexW}
    \fi\fi\fi\fi\fi\fi

    \ifnum\i=1
        \node[\vstyle,
        label={[font=\small] 140:$v_{\i}$}]
        (v\i) at ({90-40*(\i-1)}:\R) {};
    \else\ifnum\i=4
        \node[\vstyle,
        label={[font=\small] 7:$v_{\i}$}]
        (v\i) at ({90-40*(\i-1)}:\R) {};
    \else\ifnum\i=7
        \node[\vstyle,
        label={[font=\small] 160:$v_{\i}$}]
        (v\i) at ({90-40*(\i-1)}:\R) {};
    \else
        \node[\vstyle,
        label={[font=\small] {90-40*(\i-1)}:$v_{\i}$}]
        (v\i) at ({90-40*(\i-1)}:\R) {};
    \fi\fi\fi
}

\foreach \i in {1,...,9} {
    \pgfmathtruncatemacro{\next}{mod(\i,9)+1}
    \draw[edge] (v\i) to[bend left=8] (v\next);
}

\draw[edge] (v2) to[out=270,in=30] (v6);
\draw[edge] (v3) to[out=150,in=30] (v8);
\draw[edge] (v5) to[out=150,in=-90] (v9);

\draw[edge] (v1) -- (u1);
\draw[edge] (v4) -- (u3);
\draw[edge] (v7) -- (u5);

\end{tikzpicture}

\caption{Vertex cover of size 9.}
\label{fig:vcGadget2}
\end{subfigure}
    
\caption{Two distinct vertex covers of the same graph. The vertices of each  vertex cover are shown in black.}
\label{fig:vcgadgets}
\end{figure}

\section{Flower Snarks}

Flower snarks constitute an infinite family of snarks introduced by Rufus Isaacs in 1975~\cite{Isaacs1975}. For any odd integer $n \geq 3$, the \emph{flower snark} $F_n$ is the graph constructed from the disjoint union of $n$ subgraphs $B_1,B_2,\ldots,B_n$ called building blocks. Each \emph{building block} $B_j$, for $j \in \{1, 2, \ldots, n\}$, has vertex set $V(B_j) = \{x_j, y_j, v_j, u_j\}$ and edge set $E(B_j) = \{x_jv_j, y_jv_j, u_jv_j\}$, as shown in Figure~\ref{fig:bloco_basico}. Any two consecutive blocks $B_k$ and $B_{k+1}$ are joined by \emph{link-edges} from the set $E_{k,k+1} = \{u_ku_{k+1}, x_kx_{k+1}, y_ky_{k+1}\}$, as illustrated in Figure~\ref{fig:grafo_ligacao}. We say that $B_j$ is \emph{odd} whenever j is odd, and \emph{even} otherwise. For odd $n\geq 3$, the vertex set of the flower snark $F_n$ is given by $V(F_n) = \bigcup_{i=1}^{n} V(B_i)$ and its edge set is given by $E(F_n) = \left(\bigcup_{i=1}^{n} E(B_i)\right) \cup \left(\bigcup_{k=1}^{n-1} E_{k,k+1}\right) \cup \{y_1x_n, x_1y_n, u_1u_n\}$. The flower snark $F_3$ is exhibited in Figure~\ref{fig:snark_f3}. By the definition of $F_n$, we have that $|V(F_n)|=4n$ and $|E(F_n)|=6n$.

\begin{figure}
\centering
\begin{subfigure}[b]{0.15\textwidth}
\centering
\begin{tikzpicture}[cp/.style={circle,fill=white,draw,minimum size=.5em,inner sep=0pt},scale=.7]
\node[cp,label=south:$u_j$] (ui) at (0,0) {};
\node[cp,label=east:$v_j$] (vi) at (0,1.2) {};
\node[cp,label=north:$x_j$] (xi) at (-.8,2) {};
\node[cp,label=north:$y_j$] (yi) at (.7,2.8) {};
\draw (ui) -- (vi) -- (yi)  (vi) -- (xi);
\end{tikzpicture}
\caption{Block \(B_j\).}
\label{fig:bloco_basico}
\end{subfigure}
\begin{subfigure}[b]{0.35\textwidth}
\centering
\begin{tikzpicture}[cp/.style={circle,fill=white,draw,minimum size=.5em,inner sep=0pt},scale=.7]
            \node[cp,label=south:$u_k$] (ui) at (0,0) {};
            \node[cp,label=east:$v_k$] (vi) at (0,1.2) {};
            \node[cp,label=north:$x_k$] (xi) at (-.8,2) {};
            \node[cp,label=north:$y_k$] (yi) at (.7,2.8) {};
            
            \node[cp,label=south:$u_{k+1}$] (ui1) at (2.2,0) {};
            \node[cp,label=east:$v_{k+1}$] (vi1) at (2.2,1.2) {};
            \node[cp,label=north:$x_{k+1}$] (xi1) at (1.4,2) {};
            \node[cp,label=north:$y_{k+1}$] (yi1) at (2.9,2.8) {};
            
            \draw (ui) -- (vi) -- (yi)  (vi) -- (xi);
            \draw (ui1) -- (vi1) -- (yi1)  (vi1) -- (xi1);
            \draw (ui) -- (ui1)   (yi)--(yi1)  (xi)--(xi1);
        \end{tikzpicture}
\caption{Two connected blocks $B_k$ and $B_{k+1}$.}
\label{fig:grafo_ligacao}
\end{subfigure}
\begin{subfigure}[b]{0.35\textwidth}
\centering
\begin{tikzpicture}[cp/.style={circle,fill=white,draw,minimum size=.5em,inner sep=0pt},scale=.7]
            \node[cp,label=west:$u_1$] (ui) at (0,0) {};
            \node[cp,label=east:$v_1$] (vi) at (0,1.2) {};
            \node[cp,label=west:$x_1$] (xi) at (-.8,2) {};
            \node[cp,label=west:$y_1$] (yi) at (.7,2.8) {};
            
            \node[cp,label=south:$u_2$] (ui1) at (2.2,0) {};
            \node[cp,label=east:$v_2$] (vi1) at (2.2,1.2) {};
            \node[cp,label=north:$x_2$] (xi1) at (1.4,2) {};
            \node[cp,label=north:$y_2$] (yi1) at (2.9,2.8) {};
            
            \node[cp,label=east:$u_3$] (ui2) at (4.4,0) {};
            \node[cp,label=east:$v_3$] (vi2) at (4.4,1.2) {};
            \node[cp,label=north:$x_3$] (xi2) at (3.6,2) {};
            \node[cp,label=north west:$y_3$] (yi2) at (5.1,2.8) {};
            
            \draw (ui) -- node[right] {} (vi);
            \draw (vi) -- node[right, pos=.2] {} (yi);
            \draw (vi) -- node[left] {} (xi);
            \draw (ui1) -- node[right] {} (vi1); 
            \draw (vi1) -- node[right, pos=.2] {} (yi1);
            \draw (vi1) -- node[left] {} (xi1);
            \draw (ui2) -- node[right] {} (vi2); 
            \draw (vi2) -- node[right, pos=.2] {} (yi2);
            \draw (vi2) -- node[left] {} (xi2);
            \draw (ui) -- node[below] {} (ui1);
            \draw (yi)-- node[above] {} (yi1);
            \draw (xi)-- node[above, pos=.8] {} (xi1);
            \draw (ui2) -- node[below] {} (ui1);
            \draw (yi2) -- node[above] {} (yi1);
            \draw (xi2) -- node[above, pos=.2] {} (xi1);
            
            \draw (ui) .. controls (0,-1) ..  (2.2,-1);
            \draw (2.2,-1) .. controls (4.4,-1) .. (ui2);
            
            \draw (xi) .. controls (-.8,4.3) .. (3,4.3)
                  (3,4.3) .. controls (5.1,4.3) .. (yi2);
            \draw (yi) .. controls (.7,3.5) .. (5.1,3.5);
            \draw (5.1,3.5) .. controls (5.8,3.5) .. (5.8,2.8);
            \draw (5.8,2.8) .. controls (5.8,2) .. (xi2);
        \end{tikzpicture}
\caption{The flower snark \(F_3\).}
\label{fig:snark_f3}
\end{subfigure}
\caption{The process of constructing the flower snark $F_3$ from its building blocks.}
\label{fig:elementos_snark}
\end{figure}

Theorem~\ref{the:flower-snark-proof} determines the vertex cover number of flower snarks, relying on classical results in graph theory relating minimum vertex covers to maximum independent sets and also in the following characterization of bipartite graphs.

\begin{lemma}[K\"{o}nig~\cite{konig2001theorie}]
\label{lemma:bipartiteSse}
A graph is bipartite if and only if it contains no odd cycles.
\end{lemma}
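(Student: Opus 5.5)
The two directions are of very different difficulty. I would prove the easy implication (bipartite $\Rightarrow$ no odd cycles) directly and the converse by an explicit parity-of-distance bipartition.

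\emph{Bipartite implies no odd cycles.} Let $G$ be bipartite with parts $X$ and $Y$, and let $v_1v_2\cdots v_nv_1$ be a cycle. Every edge joins $X$ to $Y$, so the vertices along the cycle alternate sides. Hence $v_i$ lies on the same side as $v_1$ exactly when $i$ is odd. The closing edge $v_nv_1$ forces $v_n$ to lie on the side opposite to $v_1$, so $n$ is even.

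\emph{No odd cycles implies bipartite.} It suffices to treat each connected component separately, since a disjoint union of bipartite graphs is bipartite. So assume $G$ is connected and fix a vertex $r$. Let $d(v)$ denote the distance from $r$ to $v$. Put $X=\{v : d(v) \text{ even}\}$ and $Y=\{v : d(v)\text{ odd}\}$. I claim that no edge has both ends in the same part. Suppose $uv\in E(G)$ with $d(u)\equiv d(v) \pmod 2$. Take a shortest $r$--$u$ path $P$ and a shortest $r$--$v$ path $Q$. Traversing $P$, then the edge $uv$, then $Q$ backwards gives a closed walk of length $d(u)+d(v)+1$, which is odd.

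The remaining step, which I expect to be the only real obstacle, is the auxiliary fact that \emph{every closed walk of odd length contains an odd cycle}. I would prove it by strong induction on the length $\ell$ of the walk. If no vertex repeats except the start and end, then the walk is itself a cycle. Here $\ell\ge 3$, because a closed walk of length $1$ is impossible in a simple graph; so this cycle is odd. Otherwise some vertex $w$ occurs twice at an intermediate position. Splitting the walk at these two occurrences of $w$ yields two shorter closed walks whose lengths sum to $\ell$. Since $\ell$ is odd, one of the two is odd, and by induction it contains an odd cycle.

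Applying this fact to the closed walk built from $P$, $uv$ and $Q$ produces an odd cycle in $G$, a contradiction. Hence every edge joins $X$ to $Y$, and $G$ is bipartite.
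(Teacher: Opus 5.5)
Your proof is correct, and it is the standard argument. The easy direction uses alternation along a cycle. The converse uses a distance-parity bipartition of each component, plus the fact that an odd closed walk contains an odd cycle.

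The paper gives no proof of this lemma; it simply cites K\"{o}nig. It also uses only the easy direction. In the proof of Theorem~\ref{the:flower-snark-proof}, the bipartition $I(F_n)$, $C(F_n)$ is contradicted by the odd cycle $u_1u_2\cdots u_n$, which is exactly your first paragraph.

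One small wording point concerns the auxiliary fact. The repetition you split at may involve the starting vertex itself. Writing the walk as $w_0w_1\cdots w_\ell$ with $w_\ell=w_0$, it can happen that the only repetition is $w_j=w_0$ for some $0<j<\ell$. This case is not literally ``a vertex occurring twice at intermediate positions.'' It is cleaner to say that there are indices $0\le i<j\le \ell-1$ with $w_i=w_j$. Splitting there gives closed walks of lengths $j-i$ and $\ell-(j-i)$, both between $1$ and $\ell-1$, and your induction then goes through verbatim.
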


\begin{theorem}
\label{the:flower-snark-proof}
Let $n\geq 3$ be an odd integer. If $F_n$ is a flower snark, then, $\tau(F_n) = 2n + 1$.
\end{theorem}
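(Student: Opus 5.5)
We prove the equivalent statement $\alpha(F_n)=2n-1$. By Lemma~\ref{lemma:gallai1959}, $\tau(F_n)=|V(F_n)|-\alpha(F_n)=4n-\alpha(F_n)$, so $\tau(F_n)=2n+1$ amounts to showing $\alpha(F_n)\le 2n-1$ and exhibiting an independent set of size $2n-1$.

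\textbf{Upper bound $\alpha(F_n)\le 2n-1$.} Suppose, for a contradiction, that $I$ is an independent set of $F_n$ with $|I|\ge 2n$. The graph $F_n$ is cubic and $I$ is independent, so the $3|I|\ge 6n$ edges incident to vertices of $I$ are pairwise distinct. Since $|E(F_n)|=6n$, we get $|I|=2n$, and every edge of $F_n$ has exactly one endpoint in $I$. Hence $V(F_n)\setminus I$ is also independent, and $(I, V(F_n)\setminus I)$ is a bipartition of $F_n$. However, $u_1u_2\cdots u_nu_1$ is a cycle of odd length $n$, so Lemma~\ref{lemma:bipartiteSse} says $F_n$ is not bipartite. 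This contradiction gives $\alpha(F_n)\le 2n-1$, hence $\tau(F_n)\ge 2n+1$.

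\textbf{Construction of an independent set of size $2n-1$.} Let
\[
I=\{x_j,y_j,u_j : j \text{ odd},\ j<n\}\cup\{v_j : j \text{ even}\}\cup\{v_n\}.
\]
Its size is $3\cdot\frac{n-1}{2}+\frac{n-1}{2}+1=2n-1$. We check independence by edge type, using that $n$ is odd.
\begin{enumerate}[(i)]
\item \emph{Block edges.} Each block $B_j$ contributes either $\{x_j,y_j,u_j\}$ (odd $j<n$) or the single vertex $v_j$ (even $j$ and $j=n$). These are never adjacent within the star $B_j$.
\item \emph{Link-edges.} The chosen $x$'s, $y$'s and $u$'s all have odd indices $j\le n-2$, so no two of them lie in consecutive blocks. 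The chosen $v$'s have no link-edges at all.
\item \emph{Closing edges $y_1x_n$, $x_1y_n$, $u_1u_n$.} None of $x_n,y_n,u_n$ is in $I$, so these edges are not inside $I$.
\end{enumerate}
Therefore $I$ is independent, so $\alpha(F_n)\ge 2n-1$ and $\tau(F_n)\le 2n+1$.

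Combining the two bounds gives $\tau(F_n)=2n+1$. The only non-routine step is the upper bound on $\alpha(F_n)$, and the edge-counting argument handles it: an independent set of size $2n$ in a cubic graph on $4n$ vertices forces a bipartition, which the odd cycle on the $u_j$ rules out.
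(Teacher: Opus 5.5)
Your proof is correct and follows the paper's argument. Gallai's identity reduces the claim to $\alpha(F_n)=2n-1$, the upper bound uses the same degree count forcing a bipartition that the odd cycle $u_1u_2\cdots u_n$ contradicts, and the lower bound is an explicit independent set that is just a cyclic shift of the paper's (the paper takes $x_j,y_j,u_j$ in the even blocks and $v_j$ in the odd blocks); your direct observation $3|I|\le 6n$ is only a slightly cleaner phrasing of the paper's detour through the complementary vertex cover.
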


\begin{proof}
Let $F_n$ be a flower snark with odd $n \geq 3$. Since $|V(F_n)| = 4n$, Lemma~\ref{lemma:gallai1959} implies that $\tau(F_n) = 4n - \alpha(F_n)$. Therefore, to conclude that $\tau(F_n) = 2n + 1$, it suffices to prove that $\alpha(F_n) = 2n - 1$. We begin by establishing the lower bound $\alpha(F_n) \geq 2n - 1$. In order to do this, we define an independent set $I(F_n)$ for the flower snark  $F_n$ with $2n-1$ vertices as follows: 
\[
I(F_n)=\left\{v_{2i+1}\mid 0\leq i\leq \frac{n-1}           {2}\right\}\bigcup\left\{y_{2j},x_{2j},u_{2j}\mid 1\leq j\leq \frac{n-1}{2}\right\}.
\]

As an example, Figure~\ref{fig:snark_f3_independent_set} illustrates $F_3$ with the independent set $I(F_3) = \{v_1, y_2, x_2, u_2, v_3\}$.

\begin{figure}
\centering
    \begin{tikzpicture}[cp/.style={circle,fill=white,draw,minimum size=.5em,inner sep=0pt},scale=0.7]
        \node[cp,label=west:$u_1$] (ui) at (0,0) {};
        \node[cp,fill=black!80,label=east:$v_1$] (vi) at (0,1.2) {};
        \node[cp,label=west:$x_1$] (xi) at (-.8,2) {};
        \node[cp,label=west:$y_1$] (yi) at (.7,2.8) {};
        
        \node[cp,fill=black!80,label=south:$u_2$] (ui1) at (2.2,0) {};
        \node[cp,label=east:$v_2$] (vi1) at (2.2,1.2) {};
        \node[cp,fill=black!80,label=north:$x_2$] (xi1) at (1.4,2) {};
        \node[cp,fill=black!80,label=north:$y_2$] (yi1) at (2.9,2.8) {};
        
        \node[cp,label=east:$u_3$] (ui2) at (4.4,0) {};
        \node[cp,fill=black!80,label=east:$v_3$] (vi2) at (4.4,1.2) {};
        \node[cp,label=north:$x_3$] (xi2) at (3.6,2) {};
        \node[cp,label=north west:$y_3$] (yi2) at (5.1,2.8) {};
        
        \draw (ui) -- node[right] {} (vi);
        \draw (vi) -- node[right, pos=.2] {} (yi);
        \draw (vi) -- node[left] {} (xi);
        \draw (ui1) -- node[right] {} (vi1); 
        \draw (vi1) -- node[right, pos=.2] {} (yi1);
        \draw (vi1) -- node[left] {} (xi1);
        \draw (ui2) -- node[right] {} (vi2); 
        \draw (vi2) -- node[right, pos=.2] {} (yi2);
        \draw (vi2) -- node[left] {} (xi2);
        \draw (ui) -- node[below] {} (ui1);
        \draw (yi)-- node[above] {} (yi1);
        \draw (xi)-- node[above, pos=.8] {} (xi1);
        \draw (ui2) -- node[below] {} (ui1);
        \draw (yi2) -- node[above] {} (yi1);
        \draw (xi2) -- node[above, pos=.2] {} (xi1);
        
        \draw (ui) .. controls (0,-1) ..  (2.2,-1);
        \draw (2.2,-1) .. controls (4.4,-1) .. (ui2);
        
        \draw (xi) .. controls (-.8,4.3) .. (3,4.3)
              (3,4.3) .. controls (5.1,4.3) .. (yi2);
        \draw (yi) .. controls (.7,3.5) .. (5.1,3.5);
        \draw (5.1,3.5) .. controls (5.8,3.5) .. (5.8,2.8);
        \draw (5.8,2.8) .. controls (5.8,2) .. (xi2);
    \end{tikzpicture}
\caption{An independent set in the flower snark \(F_3\) indicated by the vertices in black color.}
\label{fig:snark_f3_independent_set}
\end{figure}

\vspace{0.5cm}

In order to see that $|I(F_n)|=2n-1$, note that $I(F_n)$ contains one vertex from each odd block and three vertices from each even block. Since $n$ is an odd number, there are $\frac{n+1}{2}$ odd blocks and $\frac{n-1}{2}$ even blocks. Summing up these contributions, we obtain:$$|I(F_n)| = 1 \cdot \left(\frac{n+1}{2}\right) + 3 \cdot \left(\frac{n-1}{2}\right) = \frac{n+1 + 3n - 3}{2} = \frac{4n - 2}{2} = 2n - 1.$$

Now, we prove that $I(F_n)$ is an independent set. Note that each even block is connected by link-edges only to odd blocks, and vice versa. Moreover, $v_i$ is the only vertex of an odd block $B_i$ that belongs to $I(F_n)$ and this vertex is not adjacent to any vertex of an even block. Furthermore, the three vertices of each even block $B_i$ that are chosen to form $I(F_n)$, are the vertices $u_i$, $x_i$ and $y_i$, that are pairwise non-adjacent. Therefore, $I(F_n)$ is an independent set with $|I(F_n)|= 2n-1$.

Next, we establish the upper bound $\alpha(F_n) \leq 2n-1$. Suppose, for the sake of contradiction, that $F_n$ has an independent set $I(F_n)$ of size $|I(F_n)| \geq 2n$. Partition $V(F_n)$ into two sets $I(F_n)$ and $C(F_n)$ such that $C(F_n) = V(F_n) \setminus I(F_n)$. By Lemma~\ref{lemma:gallai1959}, $C(F_n)$ is a vertex cover of $F_n$ (every edge in $F_n$ must have at least one endpoint in $C(F_n)$). Since $|V(F_n)| = 4n$, then $|C(F_n)| = |V(F_n)|-|I(F_n)| \leq 2n$.  

Since $F_n$ is a cubic graph, we can calculate the sum of the degrees of the vertices in the sets $I(F_n)$ and $C(F_n)$ as follows:$$\sum_{u \in I(F_n)} d(u) = 3\cdot |I(F_n)| \geq 6n \quad \text{and} \quad \sum_{v \in C(F_n)} d(v) = 3\cdot |C(F_n)| \leq 6n.$$

Since $|E(F_n)| = 6n$ and the sum of degrees of the vertices in the independent set $I(F_n)$ is at least $6n$, then all the edges of $F_n$ have an endpoint in $I(F_n)$ and the other endpoint in the vertex cover $C(F_n)$. This implies that  $C(F_n)$ is an independent set, turning $F_n$ into a bipartite graph. This contradicts Lemma~\ref{lemma:bipartiteSse}, as $F_n$ contains an induced odd cycle of length $n$ formed by the vertices $\{u_1, u_2, \ldots, u_n\}$.

From this contradiction, we obtain that our initial assumption is false. Hence, $F_n$ cannot contain an independent set of size $2n$ or larger. Combining the previously established lower bound $\alpha(F_n) \geq 2n-1$ with this upper bound $\alpha(F_n) \leq 2n-1$, we conclude that $\alpha(F_n) = 2n-1$. This, in turn, yields $\tau(F_n) = 2n+1$, completing the proof.
\end{proof}

\section{Goldberg Snarks}

The family of Goldberg snarks was built around 1981 by Mark K.~Goldberg~\cite{GOLDBERG1981282} and consists of the graphs $G_n$ for odd $n \geq 3$. Similar to flower snarks, they are constructed from the disjoint union of subgraphs called building blocks. For each $i \in \{1,\ldots,n\}$, a \emph{building block} $B_i$ in this context is a graph with vertex set $V(B_i) = \{a_i, b_i, c_i, d_i, e_i, f_i, g_i, h_i\}$ and edge set $E(B_i) = \{a_ib_i, a_ie_i, b_ic_i, c_id_i, c_if_i, d_ie_i, d_ih_i, e_ig_i, f_ig_i\}$, as shown in Figure~\ref{fig:bloco-construcaoGoldBi}. Two blocks $B_j$ and $B_{\ell}$ are joined by a set of \emph{link-edges} defined as $E_{j,\ell} = \{b_ja_{\ell}, g_jf_{\ell}, h_jh_{\ell}\}$, which is illustrated in Figure~\ref{fig:link-graphGold}.

For an odd integer $n \geq 3$, the Goldberg snark $G_n$ is formally defined by $V(G_n) = \bigcup_{i=1}^n V(B_i)$ and $E(G_n) = \left(\bigcup_{i=1}^{n} E(B_i)\right) \cup \left(\bigcup_{j=1}^{n-1} E_{j,j+1}\right) \cup E_{n,1}$. Figure~\ref{fig:snarkG3} shows $G_3$, the first member of this family.

\begin{figure}
     \centering
     \begin{subfigure}[b]{0.15\textwidth}
         \centering
         \begin{tikzpicture}
      [cp/.style={circle,fill=white,draw,minimum size=1pt,inner sep=1.5pt,font=\scriptsize},scale=.7]
      \node[cp,label=above:{$a_i$}] (v15) at (15,0) {};
      \node[cp,label=above:{$b_i$}] (v25) at (17,0) {};
      \node[cp,label=above:{$c_i$}] (v35) at (15,-1) {};
      \node[cp,label={[label distance=-.1cm]330:{$d_i$}}] (v45) at (16,-1) {};
      \node[cp,label=above:{$e_i$}] (v55) at (17,-1) {};
      \node[cp,label=below:{$f_i$}] (v65) at (15,-2) {};
      \node[cp,label=below:{$g_i$}] (v75) at (17,-2) {};
      \node[cp,label=below:{$h_i$}] (v85) at (16,-3) {};
      
      \draw (v15) -- (v25) -- (v35) -- (v45) -- (v55) -- (v75) -- (v65) -- (v35)  (v15) -- (v55) (v45) -- (v85);

    \end{tikzpicture}
    \caption{Block $B_i$.}
    \label{fig:bloco-construcaoGoldBi}
     \end{subfigure}
     \begin{subfigure}[b]{0.35\textwidth}
         \centering
         \begin{tikzpicture}
      [cp/.style={circle,fill=white,draw,minimum size=1pt,inner sep=1.5pt,font=\scriptsize},scale=.7]
      \node[cp,label=above:{$a_{j}$}] (v1) at (0,0) {};
      \node[cp,label=above:{$b_{j}$}] (v2) at (2,0) {};
      \node[cp,label=above:{$c_{j}$}] (v3) at (0,-1) {};
      \node[cp,label={[label distance=-.1cm]330:{$d_{j}$}}] (v4) at (1,-1) {};
      \node[cp,label=above:{$e_{j}$}] (v5) at (2,-1) {};
      \node[cp,label=below:{$f_{j}$}] (v6) at (0,-2) {};
      \node[cp,label=below:{$g_{j}$}] (v7) at (2,-2) {};
      \node[cp,label=below:{$h_{j}$}] (v8) at (1,-3) {};

      \node[cp,label=above:{$a_{j+1}$}] (v11) at (3,0) {};
      \node[cp,label=above:{$b_{j+1}$}] (v21) at (5,0) {};
      \node[cp,label=above:{$c_{j+1}$}] (v31) at (3,-1) {};
      \node[cp,label={[label distance=-.1cm]330:{$d_{j+1}$}}] (v41) at (4,-1) {};
      \node[cp,label=above:{$e_{j+1}$}] (v51) at (5,-1) {};
      \node[cp,label=below:{$f_{j+1}$}] (v61) at (3,-2) {};
      \node[cp,label=below:{$g_{j+1}$}] (v71) at (5,-2) {};
      \node[cp,label=below:{$h_{j+1}$}] (v81) at (4,-3) {};
      
      \draw (v1) -- (v2) -- (v3) -- (v4) -- (v5) -- (v7) -- (v6) -- (v3)  (v1) -- (v5) (v4) -- (v8);
      \draw (v11) -- (v21) -- (v31) -- (v41) -- (v51) -- (v71) -- (v61) -- (v31)  (v11) -- (v51) (v41) -- (v81);
      \draw (v2)--(v11) (v7)--(v61) (v8)--(v81);

    \end{tikzpicture}
    \caption{Blocks $B_j$ and $B_{j+1}$ connected.}
    \label{fig:link-graphGold}
     \end{subfigure}
     \begin{subfigure}[b]{0.4\textwidth}
    \centering
    \begin{tikzpicture}
      [cp/.style={circle,fill=white,draw,minimum size=1pt,inner sep=1.5pt,font=\scriptsize},scale=.7]
      \node[cp,label=above:{$a_1$}] (v1) at (0,0) {};
      \node[cp,label=above:{$b_1$}] (v2) at (2,0) {};
      \node[cp,label=above:{$c_1$}] (v3) at (0,-1) {};
      \node[cp,label={[label distance=-.1cm]330:{$d_1$}}] (v4) at (1,-1) {};
      \node[cp,label=above:{$e_1$}] (v5) at (2,-1) {};
      \node[cp,label=below:{$f_1$}] (v6) at (0,-2) {};
      \node[cp,label=below:{$g_1$}] (v7) at (2,-2) {};
      \node[cp,label=below:{$h_1$}] (v8) at (1,-3) {};

      \node[cp,label=above:{$a_2$}] (v11) at (3,0) {};
      \node[cp,label=above:{$b_2$}] (v21) at (5,0) {};
      \node[cp,label=above:{$c_2$}] (v31) at (3,-1) {};
      \node[cp,label={[label distance=-.1cm]330:{$d_2$}}] (v41) at (4,-1) {};
      \node[cp,label=above:{$e_2$}] (v51) at (5,-1) {};
      \node[cp,label=below:{$f_2$}] (v61) at (3,-2) {};
      \node[cp,label=below:{$g_2$}] (v71) at (5,-2) {};
      \node[cp,label=below:{$h_2$}] (v81) at (4,-3) {};

      \node[cp,label=above:{$a_3$}] (v12) at (6,0) {};
      \node[cp,label=above:{$b_3$}] (v22) at (8,0) {};
      \node[cp,label=above:{$c_3$}] (v32) at (6,-1) {};
      \node[cp,label={[label distance=-.1cm]330:{$d_3$}}] (v42) at (7,-1) {};
      \node[cp,label=above:{$e_3$}] (v52) at (8,-1) {};
      \node[cp,label=below:{$f_3$}] (v62) at (6,-2) {};
      \node[cp,label=below:{$g_3$}] (v72) at (8,-2) {};
      \node[cp,label=below:{$h_3$}] (v82) at (7,-3) {};

      \draw (v1) -- (v2) -- (v3) -- (v4) -- (v5) -- (v7) -- (v6) -- (v3)  (v1) -- (v5) (v4) -- (v8);
      \draw (v11) -- (v21) -- (v31) -- (v41) -- (v51) -- (v71) -- (v61) -- (v31)  (v11) -- (v51) (v41) -- (v81);
      \draw (v12) -- (v22) -- (v32) -- (v42) -- (v52) -- (v72) -- (v62) -- (v32)  (v12) -- (v52) (v42) -- (v82);
      \draw (v2)--(v11) (v7)--(v61) (v8)--(v81) (v21)--(v12) (v71)--(v62) (v81)--(v82);

      \draw (v8) .. controls (.5,-3) .. (.5,-3.5);
      \draw (.5,-3.5) .. controls (.5,-4) .. (1,-4);
      \draw (v82) .. controls (7.5,-3) .. (7.5,-3.5);
      \draw (7.5,-3.5) .. controls (7.5,-4) .. (7,-4);
      \draw (1,-4) -- (7,-4);

      \draw (v6) .. controls (-0.5,-2) .. (-0.5,-3.5);
      \draw (-0.5,-3.5) .. controls (-0.5,-4.5) .. (1,-4.5);
      \draw (v72) .. controls (8.5,-2) .. (8.5,-3.5);
      \draw (8.5,-3.5) .. controls (8.5,-4.5) .. (7,-4.5);
      \draw (1,-4.5) -- (7,-4.5);

      \draw (v1) .. controls (-0.5,0) .. (-0.5,.5);
      \draw (-0.5,.5) .. controls (-0.5,1) .. (0,1);
      \draw (v22) .. controls (8.5,0) .. (8.5,.5);
      \draw (8.5,.5) .. controls (8.5,1) .. (8,1);
      \draw (0,1) -- (8,1);
    \end{tikzpicture}
    \caption{Snark $G_3$.}
    \label{fig:snarkG3}
     \end{subfigure}
        \caption{Stages of construction of a Goldberg snark.}
        \label{fig:GolbergBlocks}
\end{figure}

Theorem~\ref{the:goldberg-snark-proof} determines the exact value of the vertex cover number of Goldberg snarks. Its proof relies on the following auxiliary lemma.

\begin{lemma}
\label{lemma:goldberg_cycle}
Let \(G\) be a graph, \(C\) an induced cycle in \(G\) of length \(n\), and \(S\) a minimum vertex cover of \(G\). Then
$|V(C) \cap S| \geq \lceil n/2 \rceil$.
\end{lemma}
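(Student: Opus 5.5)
The plan is to reduce the statement to Lemma~\ref{lemma:cyclesVC} by restricting $S$ to the cycle. Write $V(C)=\{w_1,\ldots,w_n\}$ with edges $w_iw_{i+1}$ (indices modulo $n$). Since $S$ is a vertex cover of $G$, every edge of $G$ has an endpoint in $S$. In particular, every edge $w_iw_{i+1}$ of $C$ has an endpoint in $S$, and both endpoints of such an edge lie in $V(C)$. Hence that endpoint lies in $V(C)\cap S$, so $V(C)\cap S$ is a vertex cover of the cycle $C$, viewed as a subgraph of $G$ isomorphic to $C_n$.

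Applying Lemma~\ref{lemma:cyclesVC}, any vertex cover of $C_n$ has at least $\tau(C_n)=\lceil n/2\rceil$ vertices. Therefore $|V(C)\cap S|\geq \lceil n/2\rceil$.

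Two remarks. First, neither minimality of $S$ nor inducedness of $C$ is actually used; the argument works for any vertex cover and any cycle subgraph. The hypotheses are harmless, and I would note this in passing. Second, if one wants the underlying bound to be self-contained instead of quoting Lemma~\ref{lemma:cyclesVC}, take $\lfloor n/2\rfloor$ pairwise disjoint edges $w_1w_2, w_3w_4,\ldots$ along $C$. By Lemma~\ref{lemma:minMaxCover}, this matching forces $\lfloor n/2\rfloor$ vertices into the cover. When $n$ is odd, a cover of size exactly $(n-1)/2$ would have to contain exactly one endpoint of each matching edge and still cover the remaining edge $w_nw_1$; this is impossible by a parity (alternation) argument around the odd cycle.

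There is no real obstacle here. The only point needing care is the observation that edges of $C$ cannot be covered by vertices outside $V(C)$, which is immediate since an edge is covered only by its own endpoints.
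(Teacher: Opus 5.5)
Your proof is correct, and its first step matches the paper's: an edge of $C$ can only be covered by one of its own endpoints, so $V(C)\cap S$ must cover all $n$ edges of $C$.

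From there the two routes differ. You cite Lemma~\ref{lemma:cyclesVC}, whereas the paper argues directly by counting. Each vertex of $V(C)\cap S$ covers exactly two edges of $C$, so $2|V(C)\cap S|\geq n$, and integrality gives $|V(C)\cap S|\geq\lceil n/2\rceil$. The paper phrases this as a contradiction with $2(\lceil n/2\rceil-1)<n$.

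Citing the lemma is shorter. However, the paper states Lemma~\ref{lemma:cyclesVC} without proof, so its counting argument in effect supplies the half of that lemma you need. As a self-contained route, counting is also simpler than your matching-plus-parity alternative: integrality of $k\geq n/2$ handles odd $n$ automatically, with no alternation argument around the cycle.

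Your remark that minimality of $S$ and inducedness of $C$ are never used is right. The paper invokes inducedness only to justify that edges of $C$ are covered by vertices of $V(C)$, and that holds for any cycle subgraph.
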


\begin{proof}
Suppose, for the sake of contradiction, that $|V(C) \cap S| \leq \lceil n/2 \rceil - 1$. Since $C$ is an induced $n$-cycle, its $n$ edges can only be covered by vertices in $V(C)$. Each vertex in $V(C) \cap S$ covers exactly two edges of $C$. Thus, the number of edges of $C$ covered by $S$ is at most $2 ( \lceil n/2 \rceil - 1 ) < n$. This leaves at least one edge of $C$ uncovered, contradicting the fact that $S$ is a vertex cover. Therefore, $|V(C) \cap S| \geq \lceil n/2 \rceil$.
\end{proof}

\begin{theorem}
\label{the:goldberg-snark-proof}
Let $G_n$ be a Goldberg snark. Then, $\tau(G_n) = \lceil 9n/2 \rceil$.
\end{theorem}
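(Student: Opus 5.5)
The plan is to prove matching lower and upper bounds. The lower bound comes from Lemma~\ref{lemma:VCsubgraph} applied to suitable vertex-disjoint subgraphs, and the upper bound from an explicit cover. Since $n$ is odd, $\lceil 9n/2\rceil = 4n + \frac{n+1}{2}$, so the target is $4n + \frac{n+1}{2}$.

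\emph{Lower bound.} In each block $B_i$ I take two subgraphs.
\begin{itemize}
\item The $5$-cycle $a_ib_ic_id_ie_ia_i$, which uses the edges $a_ib_i, b_ic_i, c_id_i, d_ie_i, e_ia_i$.
\item The single edge $f_ig_i$, which is vertex-disjoint from that cycle.
\end{itemize}
By Lemma~\ref{lemma:cyclesVC}, the $5$-cycle needs $3$ vertices and the edge needs $1$, so this part of each block contributes $4$.

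Outside the blocks' first seven vertices, the vertices $h_1,\dots,h_n$ induce a cycle through the link-edges $h_jh_{j+1}$ and $h_nh_1$. This cycle has odd length $n$, so it needs $\frac{n+1}{2}$ vertices by Lemma~\ref{lemma:cyclesVC}.

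All of these subgraphs are pairwise vertex-disjoint: $2n$ subgraphs from the blocks, plus the $h$-cycle. Lemma~\ref{lemma:VCsubgraph} therefore gives
\[
\tau(G_n)\ge 4n+\tfrac{n+1}{2}.
\]

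\emph{Upper bound.} Define $S=\bigcup_{i=1}^n\{a_i,c_i,d_i,g_i\}\cup H$. Here $H=\{h_1,h_3,\dots,h_n\}$, the odd-indexed $h$'s, has size $\frac{n+1}{2}$ and covers the $n$-cycle on $h_1,\dots,h_n$. Every edge $h_jh_{j+1}$ is covered, including $h_nh_1$ since $h_1\in H$.

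The block edges are covered as follows.
\begin{itemize}
\item $a_i$ covers $a_ib_i$ and $a_ie_i$.
\item $c_i$ covers $b_ic_i$, $c_id_i$ and $c_if_i$.
\item $d_i$ covers $d_ie_i$ and $d_ih_i$.
\item $g_i$ covers $e_ig_i$ and $f_ig_i$.
\end{itemize}

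The remaining link-edges are covered as follows.
\begin{itemize}
\item Each $b_ja_{\ell}$ is covered by $a_\ell$.
\item Each $g_jf_\ell$ is covered by $g_j$.
\end{itemize}
This includes the wrap-around set $E_{n,1}$. Hence $S$ is a vertex cover with $|S|=4n+\frac{n+1}{2}$.

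Combining the two bounds gives $\tau(G_n)=\lceil 9n/2\rceil$. The only real care needed is in the lower bound: one must choose subgraphs that are genuinely vertex-disjoint and correctly leave every $h_i$ to the $h$-cycle. Choosing the $5$-cycle through $d_i$, rather than the $6$-cycle $a_ib_ic_if_ig_ie_i$, is exactly what allows the extra edge $f_ig_i$ to be used.
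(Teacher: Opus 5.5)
Your proof is correct and follows the paper's argument. The lower bound $4n+\frac{n+1}{2}$ comes from Lemma~\ref{lemma:VCsubgraph} applied to the same vertex-disjoint family: in each block the $5$-cycle $a_ib_ic_id_ie_i$ and the edge $f_ig_i$, together with the odd $n$-cycle on $h_1,\dots,h_n$. The upper bound comes from an explicit cover, and the only difference is cosmetic: you take $\{a_i,c_i,d_i,g_i\}$ in each block where the paper takes $\{b_i,d_i,e_i,f_i\}$, and both choices cover every block edge and every link-edge $b_ja_\ell$, $g_jf_\ell$.
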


\begin{proof}
    We first establish the upper bound $\tau(G_n) \leq \lceil 9n/2 \rceil$. Define a vertex subset $S \subseteq V(G_n)$ as:
    $$S = \left\{b_{j},d_{j},e_{j},f_{j} \mid 1 \leq j \leq n\right\} \cup \left\{h_{2i+1} \mid 0 \leq i \leq \frac{n-1}{2}\right\}$$
    
    The cardinality of $S$ is exactly $4n + \lceil n/2 \rceil = \lceil 9n/2 \rceil$. Figure~\ref{fig:G9} shows the snark $G_9$ with such a cover $S$.
    
    To verify that $S$ is a valid vertex cover, we analyze the edges of $G_n$. In each block $B_i$, the subset of vertices $\{b_i, d_i, e_i, f_i\}$ covers all edges in the set $E(B_i)$, since the remaining vertices $a_i$, $c_i$, and $g_i$ only have neighbors within this subset, and the pendant edge $d_ih_i$ is covered by $d_i$. The link-edges connecting consecutive blocks, $b_ia_{i+1}$ and $g_if_{i+1}$, are covered because all $b_j$ and $f_j$ vertices belong to $S$. Finally, the edges $h_ih_{i+1}$, are covered by the selection of odd-indexed $h_j$ vertices. Thus, $S$ is a vertex cover of $G_n$.

\begin{figure}
    \centering
    \begin{tikzpicture}
[cp/.style={circle,fill=white,draw,minimum size=1pt,inner sep=1.5pt,font=\scriptsize},
 cpblack/.style={circle,fill=black,draw,minimum size=1pt,inner sep=1.5pt,font=\scriptsize},
 scale=.6]

\foreach \i\j in {0/1,1/2,2/3,3/4,4/5,5/6,6/7,7/8,8/9}{
    
    \node[cp,label=above:{$a_\j$}] (v1\i) at (0+3*\i,0) {};
    
    \node[cpblack,label=above:{$b_\j$}] (v2\i) at (2+3*\i,0) {};
    
    \node[cp,label=above:{$c_\j$}] (v3\i) at (0+3*\i,-1) {};
    
    \node[cpblack,label={[label distance=-.15cm]330:{$d_\j$}}] (v4\i) at (1+3*\i,-1) {};
    
    \node[cpblack,label=above:{$e_\j$}] (v5\i) at (2+3*\i,-1) {};
    
    \node[cpblack,label=below:{$f_\j$}] (v6\i) at (0+3*\i,-2) {};
    
    \node[cp,label=below:{$g_\j$}] (v7\i) at (2+3*\i,-2) {};
    
    \pgfmathparse{mod(\j,2)==1 ? 1 : 0}
    \ifnum\pgfmathresult=1
        \node[cpblack,label=below:{$h_\j$}] (v8\i) at (1+3*\i,-3) {};
    \else
        \node[cp,label=below:{$h_\j$}] (v8\i) at (1+3*\i,-3) {};
    \fi
}

\foreach \i in {0,1,2,3,4,5,6,7,8}{
  \draw (v1\i) -- (v2\i) -- (v3\i) -- (v4\i) -- (v5\i) -- (v7\i) -- (v6\i) -- (v3\i)
        (v1\i) -- (v5\i) 
        (v4\i) -- (v8\i);
}

\foreach \i\j in {0/1,1/2,3/4,4/5,5/6,6/7,7/8} 
{
  \draw (v2\i) -- (v1\j);
  \draw (v7\i) -- (v6\j);
  \draw (v8\i) -- (v8\j);
}
\draw (v22) -- (v13);
\draw (v72) -- (v63);
\draw (v82) -- (v83);

\draw (v80) .. controls (.5,-3) .. (.5,-3.5);
\draw (.5,-3.5) .. controls (.5,-4) .. (1,-4);
\draw (v88) .. controls (25.5,-3) .. (25.5,-3.5);
\draw (25.5,-3.5) .. controls (25.5,-4) .. (25,-4);
\draw (1,-4) -- (25,-4);

\draw (v60) .. controls (-0.5,-2) .. (-0.5,-3.5);
\draw (-0.5,-3.5) .. controls (-0.5,-4.5) .. (1,-4.5);
\draw (v78) .. controls (26.5,-2) .. (26.5,-3.5);
\draw (26.5,-3.5) .. controls (26.5,-4.5) .. (26,-4.5);
\draw (1,-4.5) -- (26,-4.5);

\draw (v10) .. controls (-0.5,0) .. (-0.5,.5);
\draw (-0.5,.5) .. controls (-0.5,1) .. (0,1);
\draw (v28) .. controls (26.5,0) .. (26.5,.5);
\draw (26.5,.5) .. controls (26.5,1) .. (26,1);
\draw (0,1) -- (26,1);

\end{tikzpicture}
    \caption{Snark $G_9$ with a minimum vertex cover indicated by the vertices with black color.}
    \label{fig:G9}
\end{figure}

    Next, we prove the lower bound $\tau(G_n) \geq \lceil 9n/2 \rceil$. Let $S$ be a minimum vertex cover of $G_n$. Also, let $C_n \subset G_n$ be the cycle of length $n$ induced by the vertices $\{h_1, h_2, \ldots, h_n\} \subset E(G_n)$. By Lemma~\ref{lemma:goldberg_cycle}, we have $|V(C_n) \cap S| \geq \lceil n/2 \rceil$.
    
    For each $1 \leq i \leq n$, let $H_i \subset B_i$ be the subgraph induced by $\{a_i, b_i, c_i, d_i, e_i, f_i, g_i\}$, as illustrated in Figure~\ref{fig:subgraph_b1}. Note that each subgraph $H_i$ contains an induced 5-cycle $C_5^i = (a_i,b_i,c_i,d_i,e_i)$. By Lemma~\ref{lemma:goldberg_cycle}, $|V(C_5^i)\cap S| \geq 3$. Furthermore, the edge $f_ig_i \in E(H_i)$ does not incide in any vertex of $C_5^i$. Covering this edge requires at least one additional vertex. Therefore, $|V(H_i) \cap S| \geq 3 + 1 = 4$. 
    Since $C_n$ and all subgraphs $H_1, H_2, \ldots, H_n$ are mutually vertex-disjoint, any vertex cover $S$ must contain at least $\lceil n/2 \rceil$ vertices from $C_n$ and $4$ vertices from each $H_i$. Consequently, 
    $|S| \geq 4n + \lceil n/2 \rceil = \lceil 9n/2 \rceil$.
    Since the lower and upper bounds match, we conclude that $\tau(G_n) = \lceil 9n/2 \rceil$.
\end{proof}

\begin{figure}
\centering
\begin{tikzpicture}
      [cp/.style={circle,fill=white,draw,minimum size=1pt,inner sep=1.5pt,font=\scriptsize},scale=.7]
      \node[cp,label=above:{$a_i$}] (v15) at (15,0) {};
      \node[cp,label=above:{$b_i$}] (v25) at (17,0) {};
      \node[cp,label=above:{$c_i$}] (v35) at (15,-1) {};
      \node[cp,label={[label distance=-.1cm]330:{$d_i$}}] (v45) at (16,-1) {};
      \node[cp,label=above:{$e_i$}] (v55) at (17,-1) {};
      \node[cp,label=below:{$f_i$}] (v65) at (15,-2) {};
      \node[cp,label=below:{$g_i$}] (v75) at (17,-2) {};
            
      \draw (v15) -- (v25) -- (v35) -- (v45) -- (v55) -- (v75) -- (v65) -- (v35)  (v15) -- (v55) (v45);
    \end{tikzpicture}
\caption{Subgraph \(H_i\).}
\label{fig:subgraph_b1}
\end{figure}

\section{The first family of generalized Blanu\v{s}a snarks}

In 1983, John J.~Watkins~\cite{watkins1983snarks} constructed two infinite families of snarks that are known as the first and the second families of generalized Blanu\v{s}a snarks. In this paper, we focus on the first family. The members of the \emph{first family of generalized Blanu\v{s}a snarks} are constructed from subgraphs called \emph{building blocks}. There are two distinct building blocks in this class, namely the subgraphs $A_1$ and $L_j$, where $j \geq 1$, and they are defined in Figure~\ref{fig:blanusa_blocks}. The degree-2 vertices $a$, $b$, $c$, and $d$ in $A_1$, as well as the degree-2 vertices $x_j$, $y_j$, $w_j$, and $z_j$ in $L_j$, are called \emph{link vertices}.

A generalized Blanuša snark of the first family is denoted by $B_j^1$ and is formed by $j$ blocks $L_1, L_2, \ldots, L_j$ and one single block $A_1$. Thus, $\mathfrak{B}^1 = \{B_1^1, B_2^1, B_3^1, \ldots\}$ denotes the first family of generalized Blanuša snarks.

\begin{figure}
\centering
\begin{subfigure}[b]{0.45\textwidth}
\centering
\begin{tikzpicture}[
    vertex/.style={circle, draw, fill=white, inner sep=2pt, thick},
    edge/.style={thick},
    scale=1.2
]

\node[vertex, label=above:$a$] (a) at (0, 2) {};
\node[vertex, label=left:$e$] (v01) at (0, 1) {};
\node[vertex, label=below:$b$] (b) at (0, 0) {};

\node[vertex, label=above:$f$] (v12) at (1, 2) {};
\node[vertex, label=below:$g$] (v10) at (1, 0) {};

\node[vertex, label=above:$h$] (v22) at (2, 2) {};
\node[vertex, label=right:$i$] (v21) at (2, 1) {};
\node[vertex, label=below:$j$] (v20) at (2, 0) {};

\node[vertex, label=above:$c$] (c) at (3, 2) {};
\node[vertex, label=below:$d$] (d) at (3, 0) {};

\draw[edge] (a) -- (v12) -- (v22) -- (c); 
\draw[edge] (v01) -- (v21);     
\draw[edge] (b) -- (v10) -- (v20) -- (d); 

\draw[edge] (a) -- (v01) -- (b);         
\draw[edge] (v12) -- (v10);    
\draw[edge] (v22) -- (v21) -- (v20);    
\draw[edge] (c) -- (d);                 
\end{tikzpicture}
\caption{Block $A_1$.}
\label{fig:bloco-construcaoGoldA1}
\end{subfigure}
\begin{subfigure}[b]{0.45\textwidth}
\centering
\begin{tikzpicture}[
    vertex/.style={circle, draw, fill=white, inner sep=2pt, thick},
    edge/.style={thick},
    scale=1.2
]

\node[vertex, label=above:{$x_j$}] (x) at (0, 2) {}; 
\node[vertex, label=left:{$u_j$}] (v01) at (0, 1) {};
\node[vertex, label=below:{$y_j$}] (y) at (0, 0) {};

\node[vertex, label=above:{$v_j$}] (v12) at (1, 2) {};
\node[vertex, label=below:{$s_j$}] (v10) at (1, 0) {};

\node[vertex, label=above:{$w_j$}] (w) at (2, 2) {};
\node[vertex, label=right:{$t_j$}] (v21) at (2, 1) {};
\node[vertex, label=below:{$z_j$}] (z) at (2, 0) {};

\draw[edge] (x) -- (v12) -- (w); 
\draw[edge] (v01) -- (v21);        
\draw[edge] (y) -- (v10) -- (z); 

\draw[edge] (x) -- (v01) -- (y);            
\draw[edge] (v12) -- (v10);        
\draw[edge] (w) -- (v21) -- (z);        
\end{tikzpicture}
\caption{Block $L_j$.}
\label{fig:bloco-construcaoGoldLj}
\end{subfigure}
\caption{Building blocks of generalized Blanu\v{s}a snarks.}
\label{fig:blanusa_blocks}
\end{figure}

The generalized Blanu\v{s}a snark $B_i^1$ is the graph with vertex set $V(B_i^1) = V(A_1)\cup \left(\bigcup_{k=1}^i V(L_k) \right)$ and edge set $E(B_i^1) = E(A_1)\cup \left(\bigcup_{k=1}^i E(L_k)\right) \cup \left\{ z_jx_{j+1}, w_jy_{j+1} \mid 1\leq j \leq i-1 \right\} \cup \left\{ y_1c, x_1d, z_ia,w_2b \right\}$. As an example, Figure~\ref{fig:blanusa_1_2} illustrates the snark $B_2^1$. The edges whose both endpoints are link-vertices are called \emph{link-edges}.

Lemma~\ref{lemma:blanusa-snark-proof} establishes a lower bound on the vertex cover number of the graphs in the first family of generalized Blanu\v{s}a snarks. To prove this result, we first present the following sequence of lemmas, which will be used in its proof.

\begin{figure}
\centering
\begin{tikzpicture}[
    vertex/.style={circle, draw, fill=white, inner sep=2pt, font=\large, thick},
    edge/.style={thick},
    scale=0.5
]

    \node[vertex, label=left:$a$] (a) at (0, 3) {};
    \node[vertex] (l_mid) at (0, 1.5) {};
    \node[vertex, label=left:$b$] (b) at (0, 0) {};
    
    \node[vertex] (c_top_l) at (2, 3) {};
    \node[vertex] (c_mid_l) at (2, 1.5) {};
    \node[vertex] (c_bot_l) at (2, 0) {};
    
    \node[vertex, label=above:$c$] (c) at (4, 3) {};
    \node[vertex, label=below:$d$] (d) at (4, 0) {};

    \node[vertex] (cross_top) at (1, 3) {}; 
    \node[vertex] (cross_bot) at (1, 0) {}; 
    \draw[edge] (cross_top) -- (cross_bot);

    \draw[edge] (a) -- (cross_top) -- (c_top_l) -- (c);
    \draw[edge] (l_mid) -- (c_mid_l);
    \draw[edge] (b) -- (cross_bot) -- (c_bot_l) -- (d);
    
    \draw[edge] (a) -- (l_mid) -- (b);
    \draw[edge] (c_top_l) -- (c_mid_l) -- (c_bot_l);
    \draw[edge] (c) -- (d);
    
    \begin{scope}[xshift=6cm]
        \node[vertex, label=above:$x_1$] (x1) at (0, 3) {};
        \node[vertex] (m1_l_mid) at (0, 1.5) {};
        \node[vertex, label=below:$y_1$] (y1) at (0, 0) {};
        
        \node[vertex] (m1_c_top) at (2, 3) {};
        \node[vertex] (m1_c_bot) at (2, 0) {};
        
        \node[vertex, label=above:$w_1$] (w1) at (4, 3) {};
        \node[vertex] (m1_r_mid) at (4, 1.5) {}; 
        \node[vertex, label=below:$z_1$] (z1) at (4, 0) {};
        
        \draw[edge] (x1) -- (m1_c_top) -- (w1);
        \draw[edge] (m1_l_mid) -- (m1_r_mid); 
        \draw[edge] (y1) -- (m1_c_bot) -- (z1);
        
        \draw[edge] (x1) -- (m1_l_mid) -- (y1);
        \draw[edge] (m1_c_top) -- (m1_c_bot);
        \draw[edge] (w1) -- (m1_r_mid) -- (z1);
    \end{scope}

    \begin{scope}[xshift=12cm]
        \node[vertex, label=above:$x_2$] (x2) at (0, 3) {};
        \node[vertex] (m2_l_mid) at (0, 1.5) {};
        \node[vertex, label=below:$y_2$] (y2) at (0, 0) {};
        
        \node[vertex] (m2_c_top) at (2, 3) {};
        \node[vertex] (m2_c_bot) at (2, 0) {};
        
        \node[vertex, label=above:$w_2$] (w2) at (4, 3) {};
        \node[vertex] (m2_r_mid) at (4, 1.5) {}; 
        \node[vertex, label=below:$z_2$] (z2) at (4, 0) {};
        
        \draw[edge] (x2) -- (m2_c_top) -- (w2);
        \draw[edge] (m2_l_mid) -- (m2_r_mid); 
        \draw[edge] (y2) -- (m2_c_bot) -- (z2);
        
        \draw[edge] (x2) -- (m2_l_mid) -- (y2);
        \draw[edge] (m2_c_top) -- (m2_c_bot);
        \draw[edge] (w2) -- (m2_r_mid) -- (z2);
    \end{scope}

    \draw[edge] (c) -- (y1);
    \draw[edge] (d) -- (x1);
    
    \draw[edge] (w1) -- (y2);
    \draw[edge] (z1) -- (x2);
    
    \draw[edge, rounded corners=15pt] (a) -- ++(0, 1.5) -- (17.5, 4.5) |- (z2);
    
    \draw[edge, rounded corners=15pt] (b) -- ++(0, -1.5) -- (17.0, -1.5) |- (w2);

\end{tikzpicture}
\caption{The generalized Blanu\v{s}a snarks $B_2^1$.}
\label{fig:blanusa_1_2}
\end{figure}

\begin{lemma}
\label{prop:prop_18}
A building block $L_j$ has $\tau(L_j) = 4$. Moreover, it admits a unique vertex cover $S=\{s_j,t_j,u_j,v_j\}$ of cardinality 4. 
\end{lemma}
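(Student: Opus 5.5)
The plan is to prove the lower bound with a matching, exhibit the cover $\{s_j,t_j,u_j,v_j\}$ for the upper bound, and then show uniqueness by ruling out every degree-2 vertex of $L_j$ from a cover of size 4.

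First I would list the ten edges of $L_j$ read off from Figure~\ref{fig:bloco-construcaoGoldLj}:
$$x_jv_j,\ v_jw_j,\ u_jt_j,\ y_js_j,\ s_jz_j,\ x_ju_j,\ u_jy_j,\ v_js_j,\ w_jt_j,\ t_jz_j .$$
From this list, $x_j,y_j,w_j,z_j$ have degree 2 and $u_j,v_j,s_j,t_j$ have degree 3.

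For the lower bound I would use the perfect matching $M=\{x_ju_j,\ v_jw_j,\ y_js_j,\ t_jz_j\}$. Lemma~\ref{lemma:minMaxCover} then gives $\tau(L_j)\ge 4$. For the upper bound I would check edge by edge that every edge in the list has an endpoint in $S=\{s_j,t_j,u_j,v_j\}$. Together these give $\tau(L_j)=4$.

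For uniqueness, let $S'$ be any vertex cover of $L_j$ with $|S'|=4$. I will show that $S'$ contains no degree-2 vertex. Then $S'\subseteq\{u_j,v_j,s_j,t_j\}$, and since both sets have four elements, $S'=S$.

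To rule out a degree-2 vertex, I would first use the symmetries of $L_j$. The map fixing $u_j,t_j$ and swapping $x_j\leftrightarrow y_j$, $v_j\leftrightarrow s_j$, $w_j\leftrightarrow z_j$ is an automorphism. So is the map fixing $v_j,s_j$ and swapping $x_j\leftrightarrow w_j$, $y_j\leftrightarrow z_j$, $u_j\leftrightarrow t_j$. Checking both takes only a glance at the edge list. These two automorphisms act transitively on $\{x_j,y_j,w_j,z_j\}$, so it suffices to rule out $x_j\in S'$.

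Suppose $x_j\in S'$. Then $S'\setminus\{x_j\}$ has three vertices and must cover all edges not incident to $x_j$. Those edges include the 5-cycle $(t_j,z_j,s_j,v_j,w_j)$, which needs at least 3 vertices by Lemma~\ref{lemma:cyclesVC}. They also include the edge $u_jy_j$, which is vertex-disjoint from that cycle and needs at least one more vertex. By Lemma~\ref{lemma:VCsubgraph} this requires at least 4 vertices, a contradiction.

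The only delicate step is finding this "5-cycle plus a disjoint edge" structure that avoids $x_j$, together with verifying the two automorphisms. Everything else is routine checking.
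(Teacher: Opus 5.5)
Your proof is correct, and the bounds are obtained essentially as in the paper: a size-$4$ matching with Lemma~\ref{lemma:minMaxCover}, and the explicit cover $\{s_j,t_j,u_j,v_j\}$. Your explicit perfect matching $\{x_ju_j,v_jw_j,y_js_j,t_jz_j\}$ also supplies something the paper's text leaves out. The paper only says that $|V(L_j)|=8$ caps every matching at $4$ edges, which is an upper bound on the matching number and by itself gives no lower bound on $\tau(L_j)$.

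For uniqueness you take a genuinely different route. The paper notes that the disjoint edges $v_js_j$ and $u_jt_j$ each need an endpoint in $S$. It then uses the rotational symmetry of the square drawing to assume $u_j,v_j\in S$. The still-uncovered edges form the path $w_jt_jz_js_jy_j$, whose only two-vertex cover is $\{t_j,s_j\}$.

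You instead show that no link vertex can lie in a $4$-vertex cover. After reducing to $x_j$ by your two reflections, $L_j-x_j$ contains the $5$-cycle $(t_j,z_j,s_j,v_j,w_j)$ and the disjoint edge $u_jy_j$, which force at least $4$ further vertices. The paper's version is a short forced-choice computation that builds the cover directly. Yours is a pure counting argument (odd cycle plus disjoint edge) with no case split on which endpoints of the cross edges are chosen.

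Your argument also proves the lemma directly in the form the later proofs use: a cover containing a link vertex of $L_j$ uses at least $5$ vertices of $L_j$. This is what Lemma~\ref{prop:prop_19} and Case~2 of Lemma~\ref{lemma:blanusa-snark-proof} rely on. Your $5$-cycle is in fact the same one the paper uses inside the proof of Lemma~\ref{prop:prop_19}.
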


\begin{proof}
Let $X \subseteq V(L_j)$ be a vertex cover of $L_j$, where $X = \{u_j, v_j, s_j, t_j\}$, as illustrated in Figure~\ref{fig:blanusa_block_cover}. Since $|X| = 4$, we have that $\tau(L_j) \leq |X| = 4$. Since $|V(L_j)|=8$, the maximum size of any matching in $L_j$ is 4. By Proposition~\ref{lemma:minMaxCover}, the cardinality of a maximum matching serves as a lower bound for the minimum vertex cover. Therefore, it follows that $\tau(L_j) \geq 4$. From the established inequalities, we conclude that $\tau(L_j) = 4$.

Let $S$ be a minimum vertex cover of $L_j$. Note that the edges $v_js_j$ and $u_jt_j$ of $L_j$ must be covered by some vertex in $S$, which implies that at least one endpoint of each of these edges must belong to $S$. Consider the drawing of $L_j$, where $x_j, y_j, w_j, z_j$ are positioned at the vertices of a square (see Figure~\ref{fig:blanusa_block_cover}). By the rotational symmetry of the drawing of $L_j$, we may assume, without loss of generality, that $u_j, v_j \in S$, so that the edges $v_js_j$ and $u_jt_j$ are covered. 
However, with only these two vertices in the set $S$, there remain 4 uncovered edges, namely $t_jw_j$, $t_jz_j$, $s_jz_j$, and $s_jy_j$. These edges induce a path $P_5$ with 5 vertices in the graph $L_j$. By Lemma~\ref{lemma:pathsVC}, we have that $\tau(P_5) = 2$. Hence, two vertices of this $P_5$ must be chosen to cover all these remaining edges, and the only way to cover these 4 edges with two vertices is to select the vertices $s_j$ and $t_j$ as part of the cover.
Therefore, $S = \{u_j, v_j, s_j, t_j\}$, and the result follows.
\end{proof}

\begin{figure}
\centering
\begin{tikzpicture}[
    vertex/.style={circle, draw, fill=white, inner sep=2pt, thick},
    edge/.style={thick},
    scale=.9
]

\node[vertex, label=above:{$x_j$}] (x) at (0, 2) {};
\node[vertex, fill=black, label=left:{$u_j$}] (v01) at (0, 1) {};
\node[vertex, label=below:{$y_j$}] (y) at (0, 0) {};

\node[vertex, fill=black, label=above:{$v_j$}] (v12) at (1, 2) {};
\node[vertex, fill=black, label=below:{$s_j$}] (v10) at (1, 0) {};

\node[vertex, label=above:{$w_j$}] (w) at (2, 2) {};
\node[vertex, fill=black, label=right:{$t_j$}] (v21) at (2, 1) {};
\node[vertex, label=below:{$z_j$}] (z) at (2, 0) {};

\draw[edge] (x) -- (v12) -- (w);
\draw[edge] (v01) -- (v21);
\draw[edge] (y) -- (v10) -- (z);

\draw[edge] (x) -- (v01) -- (y);
\draw[edge] (v12) -- (v10);
\draw[edge] (w) -- (v21) -- (z);

\end{tikzpicture}
\caption{A building block $L_j$ with its minimum vertex cover.}
\label{fig:blanusa_block_cover}
\end{figure}

\begin{lemma}
\label{lemma:5VC}
If $S$ is a vertex cover of the building block $L_j$ with $|S|=5$, then at least one of the link-vertices of $L_j$ does not belong to $S$.
\end{lemma}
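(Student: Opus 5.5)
Suppose, for contradiction, that $S$ is a vertex cover of $L_j$ with $|S|=5$ containing all four link vertices $x_j,y_j,w_j,z_j$. Then $S$ has exactly one vertex outside the link vertices, chosen from the internal vertices $u_j,v_j,s_j,t_j$. The plan is to show that no single internal vertex can finish the cover.

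Once the four link vertices are placed in $S$, every edge incident to a link vertex is covered. From the adjacency structure of $L_j$ in Figure~\ref{fig:bloco-construcaoGoldLj}, the link vertices are adjacent only to internal vertices: $x_j\sim u_j,v_j$, $y_j\sim u_j,s_j$, $w_j\sim v_j,t_j$ and $z_j\sim s_j,t_j$. Hence the only edges not yet covered are those with both endpoints internal, namely $u_jt_j$ and $v_js_j$.

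These two edges are vertex-disjoint, so they form a matching of size 2 in the subgraph induced by $\{u_j,v_j,s_j,t_j\}$. By Lemma~\ref{lemma:minMaxCover}, covering them requires at least two internal vertices. Therefore $|S|\geq 4+2=6$, contradicting $|S|=5$. It follows that at least one link vertex of $L_j$ lies outside $S$.

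The only step needing care is reading the edge list of $L_j$ correctly, in particular confirming that there are no link–link edges inside $L_j$ and that the internal–internal edges are exactly $u_jt_j$ and $v_js_j$. I expect no real obstacle beyond this check.
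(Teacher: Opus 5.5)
Your proof is correct and is essentially the paper's argument: once all four link vertices are in $S$, the two disjoint internal edges $u_jt_j$ and $v_js_j$ cannot both be covered by the single remaining vertex. Your explicit check of the adjacencies in $L_j$ (no link--link edges, and exactly these two internal--internal edges) makes precise what the paper's one-line proof leaves implicit.
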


\begin{proof}
Let $S$ be a vertex cover of the building block $L_j$, with $|S|=5$. Assume, for sake of contradiction,  that $S$ contains all the four link-vertices of $L_j$. Thus, one more additional vertex is not sufficient to cover the edges $s_jv_j$ and $t_ju_j$ of the graph $L_j$ since these edges are non-adjacent, contradicting the fact that $S$ covers all edges of $L_j$.  
\end{proof}

\begin{lemma}
\label{prop:blanusa-first-block}
The building block $A_1$ of a generalized Blanu\v{s}a snark $B_i^1$ has $\tau(A_1) = 6$. 
\end{lemma}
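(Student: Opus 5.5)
The bound $\tau(A_1)\ge 6$ should follow from Lemma~\ref{lemma:VCsubgraph} applied to two vertex-disjoint odd cycles, and a matching cover of size $6$ gives equality. Before that, record the structure of $A_1$ from Figure~\ref{fig:bloco-construcaoGoldA1}. It has the ten vertices $a,b,c,d,e,f,g,h,i,j$ and the thirteen edges
$af, fh, hc, ei, bg, gj, jd, ae, eb, fg, hi, ij, cd$.
The link vertices $a,b,c,d$ have degree $2$ and all other vertices have degree $3$.

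For the upper bound, I will exhibit an explicit cover of size $6$. The natural choice is the complement of an independent set of size $4$. Take $I=\{a,c,g,i\}$. One checks from the edge list that no two of these vertices are adjacent: $a$ is adjacent only to $e,f$; $c$ only to $h,d$; $g$ only to $b,f,j$; and $i$ only to $e,h,j$. By Lemma~\ref{lemma:gallai1959}, the set $S=\{b,d,e,f,h,j\}$ is then a vertex cover. Verifying it edge by edge against the list above is routine. Hence $\tau(A_1)\le 6$.

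For the lower bound, a matching argument via Lemma~\ref{lemma:minMaxCover} only yields $5$, since $A_1$ has ten vertices, so it cannot work. Instead I will use odd cycles, as in the proof of Lemma~\ref{lemma:VCgadget}. The vertex set splits into two vertex-disjoint $5$-cycles:
\begin{itemize}
\item $(a,f,g,b,e)$, using the edges $af,fg,gb,be,ea$;
\item $(h,i,j,d,c)$, using the edges $hi,ij,jd,dc,ch$.
\end{itemize}
By Lemma~\ref{lemma:cyclesVC}, each of these cycles has vertex cover number $3$. Lemma~\ref{lemma:VCsubgraph} then gives $\tau(A_1)\ge 3+3=6$.

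Combining the two bounds gives $\tau(A_1)=6$. The only real obstacle is identifying the correct decomposition into two $5$-cycles, since the obvious matching bound is too weak. Once the edge list is read off the figure correctly, the rest is mechanical.
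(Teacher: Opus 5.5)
Your proof is correct and takes essentially the same approach as the paper: the lower bound uses the same split of $V(A_1)$ into the two vertex-disjoint $5$-cycles on $\{a,e,b,g,f\}$ and $\{c,d,j,i,h\}$ together with Lemma~\ref{lemma:VCsubgraph}. For the upper bound you use the cover $\{b,d,e,f,h,j\}$, the complement of the independent set $\{a,c,g,i\}$, which is just as valid as the paper's cover $\{a,b,c,f,i,j\}$ from Figure~\ref{fig:A1VC}.
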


\begin{proof}
It can be checked by inspection of Figure~\ref{fig:A1VC} that $A_1$ has a vertex cover with cardinality 6. Therefore, $\tau(A_1)\leq 6$. Next, we prove that $\tau(A_1)\geq 6$. Given \(V(A_1) = \{a, b, c, d, e, f, g, h, i, j\}\), we partition $V(A_1)$ into two sets $V_l = \{a, e, b, g, f\}$ and $V_r = \{c, d, j, i, h\}$ such that these vertex sets induce two vertex-disjoint subgraphs, $G[V_l]$ and $G[V_r]$, both isomorphic to a cycle $C_5$. By Proposition \ref{lemma:cyclesVC}, $\tau(C_5) = \lceil 5/2 \rceil = 3$. Since $G[V_l]$ and $G[V_r]$ are vertex-disjoint subgraphs of $A_1$, by Lemma~\ref{lemma:VCsubgraph}, we have $\tau(A_1) \ge \tau(C_5) + \tau(C_5) = 3 + 3 = 6$. Since $\tau(A_1) \ge 6$ and $\tau(A_1) \le 6$, we conclude that $\tau(A_1) = 6$.
\end{proof}

\tikzset{
    vertex/.style={circle, draw, fill=white, inner sep=2pt, font=\normalsize, thick},
    covered/.style={vertex, fill=black},
    edge/.style={thick}
}

\begin{figure}
\centering
\begin{tikzpicture}[
    vertex/.style={circle, draw, fill=white, inner sep=2pt, thick},
    edge/.style={thick},
    scale=.9
]

\node[covered, label=above:$a$] (a) at (0, 2) {};
\node[vertex, label=left:$e$] (v01) at (0, 1) {};
\node[covered, label=below:$b$] (b) at (0, 0) {};

\node[covered, label=above:$f$] (v12) at (1, 2) {};
\node[vertex, label=below:$g$] (v10) at (1, 0) {};

\node[vertex, label=above:$h$] (v22) at (2, 2) {};
\node[covered, label=right:$i$] (v21) at (2, 1) {};
\node[covered, label=below:$j$] (v20) at (2, 0) {};

\node[covered, label=above:$c$] (c) at (3, 2) {};
\node[vertex, label=below:$d$] (d) at (3, 0) {};

\draw[edge] (a) -- (v12) -- (v22) -- (c); 
\draw[edge] (v01) -- (v21);     
\draw[edge] (b) -- (v10) -- (v20) -- (d); 

\draw[edge] (a) -- (v01) -- (b);         
\draw[edge] (v12) -- (v10);    
\draw[edge] (v22) -- (v21) -- (v20);    
\draw[edge] (c) -- (d);                 
\end{tikzpicture}
\caption{Building block $A_1$ with a minimum vertex cover.}
\label{fig:A1VC}
\end{figure}

\begin{lemma}
\label{prop:prop_19}
If $G$ is the graph formed by the disjoint union of three building blocks $L_j, L_{j+1}$, and $L_{j+2}$ with the addition of the following link-edges $\{w_jy_{j+1}, z_jx_{j+1}, w_{j+1}y_{j+2}, z_{j+1}x_{j+2}\}$, then $\tau(G) = 14$. Moreover, if $G$ has a minimum vertex cover $S$ such that the vertices $x_j$ and $y_j$ belong to $S$, then the vertices $w_{j+2}$ and $z_{j+2}$ do not belong to $S$.
\end{lemma}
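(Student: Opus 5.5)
The plan is to treat $G$ as three vertex-disjoint blocks joined by four link-edges and to count how many vertices of a cover can lie in each block. For a vertex cover $S$ of $G$, let $S_k = S\cap V(L_k)$ for $k\in\{j,j+1,j+2\}$. The blocks partition $V(G)$, so $|S| = |S_j|+|S_{j+1}|+|S_{j+2}|$. Each $S_k$ is a vertex cover of $L_k$, so $|S_k|\ge 4$ by Lemma~\ref{prop:prop_18}. The key fact, also from Lemma~\ref{prop:prop_18}, is that if $|S_k|=4$ then $S_k=\{s_k,t_k,u_k,v_k\}$, which contains none of the link vertices $x_k,y_k,w_k,z_k$.

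For the upper bound $\tau(G)\le 14$ I would exhibit an explicit cover. Take $\{s_j,t_j,u_j,v_j\}$ in $L_j$ and $\{s_{j+2},t_{j+2},u_{j+2},v_{j+2}\}$ in $L_{j+2}$. In the middle block take $\{x_{j+1},y_{j+1},w_{j+1},z_{j+1},s_{j+1},t_{j+1}\}$. One checks directly that this 6-set covers all ten edges of $L_{j+1}$ and that its four link vertices cover all four link-edges. The total is $4+6+4=14$.

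For the lower bound I would suppose $|S|\le 13$. Then the sizes $(|S_j|,|S_{j+1}|,|S_{j+2}|)$ are $(4,4,4)$ or have exactly one entry equal to $5$ and the others equal to $4$.
\begin{itemize}
\item If $|S_j|=|S_{j+1}|=4$ (in particular in the cases $(4,4,4)$ and $(4,4,5)$), then the link-edge $w_jy_{j+1}$ has neither endpoint in $S$, so $S$ is not a cover. The case $(5,4,4)$ fails the same way with the pair $L_{j+1},L_{j+2}$ and the edge $w_{j+1}y_{j+2}$.
\item In the case $(4,5,4)$, the four link-edges must all be covered from inside $L_{j+1}$. This forces $x_{j+1},y_{j+1},w_{j+1},z_{j+1}\in S_{j+1}$ with $|S_{j+1}|=5$, contradicting Lemma~\ref{lemma:5VC}.
\end{itemize}
Hence $\tau(G)=14$.

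For the second claim, let $S$ be a minimum cover with $|S|=14$ and $x_j,y_j\in S$. Then $S_j$ is not the unique 4-cover of $L_j$, so $|S_j|\ge 5$. Suppose for contradiction that $w_{j+2}\in S$ or $z_{j+2}\in S$. Then likewise $|S_{j+2}|\ge 5$. Since $|S_{j+1}|\ge 4$, the total of $14$ forces $|S_j|=|S_{j+2}|=5$ and $|S_{j+1}|=4$, so $S_{j+1}=\{s_{j+1},t_{j+1},u_{j+1},v_{j+1}\}$ contains no link vertex. The link-edges $w_jy_{j+1}$ and $z_jx_{j+1}$ must then be covered by $w_j$ and $z_j$. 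So $S_j$ contains all four link vertices of $L_j$ while $|S_j|=5$, contradicting Lemma~\ref{lemma:5VC}.

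The step needing the most care is the case analysis in the lower bound, specifically the middle case $(4,5,4)$. That is the only place where a cheap covering of all link-edges looks possible, and it is exactly what Lemma~\ref{lemma:5VC} rules out. Everything else follows from the uniqueness in Lemma~\ref{prop:prop_18}.
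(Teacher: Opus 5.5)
Your proof is correct. For $\tau(G)=14$ it follows the paper's argument: at least $4$ cover vertices per block, uniqueness of the $4$-cover $\{s_k,t_k,u_k,v_k\}$, and Lemma~\ref{lemma:5VC}. Your case split is sharper, though. You note that Lemma~\ref{lemma:5VC} is needed only for the split $(4,5,4)$, because in every other split summing to $12$ or $13$ two adjacent blocks carry their unique $4$-covers and leave a link-edge uncovered. The paper invokes Lemma~\ref{lemma:5VC} for all $13$-covers without saying where the $5$-block sits. Your witness cover has split $(4,6,4)$ and the paper's figure uses $(4,5,5)$; both are valid.

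The real difference is in the second claim. The paper argues forward: it deduces $|S\cap V(L_j)|=5$ and analyses the induced $5$-cycle on $\{v_j,w_j,t_j,z_j,s_j\}$, with a symmetry reduction, to get $t_j\in S$ and $z_j\notin S$. Hence $x_{j+1}\in S$, so $|S\cap V(L_{j+1})|\ge 5$, which leaves exactly $4$ vertices, the unique cover, for $L_{j+2}$. You argue by contradiction with pure counting: $w_{j+2}$ or $z_{j+2}$ in $S$ forces the split $(5,4,5)$. The middle block then contains no link vertex, so $w_j,z_j\in S$, and $L_j$ would contain all four of its link vertices among only $5$ cover vertices, contradicting Lemma~\ref{lemma:5VC}.

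Your route is shorter and needs neither an internal analysis of $L_j$ nor a symmetry argument. It also avoids a weak step in the paper. The paper justifies the split ``one block with $4$, two with $5$'' only by $|S|=14$ and the per-block bound. That reasoning equally allows $4+6+4$, which is exactly your witness cover. The paper's claim is rescued only because $x_j,y_j\in S$ forces $|S\cap V(L_j)|\ge 5$, and the split $(6,4,4)$ leaves $w_{j+1}y_{j+2}$ uncovered.

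What the paper's forward route buys in exchange is extra structural information: that $z_j\notin S$, that $x_{j+1}\in S$, and that the split is $(5,5,4)$. None of this is needed for the lemma.
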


\begin{proof}
Note that the graph $G$, illustrated in Figure \ref{fig:blanusa_three_blocks321}, has three vertex-disjoint subgraphs, namely the blocks $L_j$, $L_{j+1}$, and $L_{j+2}$. A simple lower bound for the vertex cover number of $G$ is $\tau(G) \geq \tau(L_j)+\tau(L_{j+1})+\tau(L_{j+2})$. By Lemma~\ref{prop:prop_18}, we have that $\tau(G) \geq \tau(L_j)+\tau(L_{j+1})+\tau(L_{j+2}) = 4+4+4 = 12$.

Suppose that $G$ has a vertex cover $S$ with cardinality 12. By Lemma~\ref{prop:prop_18}, the only way to obtain such a cover would be by covering each of the subgraphs with the cover shown in Figure~\ref{fig:blanusa_block_cover}, since it is unique. However, note that such a cover would not cover the link-edges between these blocks, which would be a contradiction. Therefore, it is not possible to have a vertex cover $S$ of cardinality 12. Hence, we obtain that $\tau(G) \geq 13$.

Thus, suppose that $G$ has a vertex cover $S$ with cardinality 13. By Lemma~\ref{prop:prop_18}, exactly two of the blocks $L_j$, $L_{j+1}$, and $L_{j+2}$ must have 4 of their vertices in the cover, and exactly one of these blocks must have 5 of its vertices in the cover. However, since two of these blocks are covered by the cover shown in Figure~\ref{fig:blanusa_block_cover} and the other block is covered by a cover with cardinality $5$, it follows by Lemma~\ref{lemma:5VC} that at least one of the link-edges connecting these blocks would not be covered by any vertex in the cover, which is a contradiction. Therefore, it is not possible to have a vertex cover $S$ of cardinality 13, and thus we conclude that $\tau(G)\geq 14$.

Figure \ref{fig:blanusa_three_blocks321} presents one of the possible minimum vertex covers for the graph $G$, thus proving that $\tau(G)\leq 14$. All these facts imply that $\tau(G) = 14$.

Next, we prove that, if $G$ has a minimum vertex cover $S$ such that the vertices $x_j$ and $y_j$ belong to $S$, then the vertices $w_{j+2}$ and $z_{j+2}$ do not belong to $S$.

Suppose that $G$ has a minimum vertex cover $S$ such that $x_j, y_j \in S$. Since $|S|=14$ and each block $L_k$ is such that $|S\cap L_k|\geq 4$ (see Lemma~\ref{prop:blanusa-first-block}), we have that one block of $G$ contains 4 vertices of $S$ and each one of the two remaining blocks of $G$ contains 5 vertices of $S$. By Lemma~\ref{prop:blanusa-first-block}, the unique way for a block $L_k$ to have $|S\cap L_k|=4$ is to choose its vertices $u_k,v_k,s_k,t_k$ to belong to $S$. Since $x_j,y_j \in S$, we obtain that the $|S\cap L_j|=5$.

The subgraph $H = G[V(L_j)\backslash \{u_j,x_j,y_j\}] \subset L_j$ is isomorphic to a 5-cycle. By Lemma~\ref{lemma:cyclesVC}, exactly 3 vertices from $H$ must be in $S$ in order to cover $H$. Since the edge $u_jt_j \in E(L_j)$ must be covered and it incides in the vertex $t_j \in V(H)$, we have that $t_j\in S$. Since the edge $s_jv_j \in E(H)$  must be covered, either $v_j$ or $s_j$ must belong to $S$. By simmetry of $G$, without loss of generality, we choose $s_j \in S$. Since the edge $v_jw_j$ must be covered, we have that either $w_j \in S$ or $v_j\in S$, w.l.o.g.~say that $w_j \in S$. Since the edge $z_jx_{j+1}$ must be covered and 5 vertices of $L_j$ were already choosed to be in $S$, we have that $x_{j+1}\in S$. 

By Lemma~\ref{prop:blanusa-first-block}, the unique way for a block $L_k$ to have $|S\cap L_k|=4$ is to choose its vertices $u_k,v_k,s_k,t_k$ to belong to $S$. Since $x_{j+1} \in S$, we obtain that the $|S\cap L_{j+1}|=5$. Since $|S|=14$, $|S\cap L_j|=5$ and $|S\cap L_{j+1}|=5$, we conclude that $|S\cap L_{j+2}|=4$. By Lemma~\ref{prop:blanusa-first-block}, the unique way for the block $L_{j+2}$ to have $|S\cap L_{j+2}|=4$ is to choose its vertices $u_{j+2},v_{j+2},s_{j+2},t_{j+2}$ to belong to $S$. This implies that $w_{j+2}$ and $z_{j+2}$ do not belong to $S$.
\end{proof}

\begin{lemma}
\label{lemma:Fgraph}
If $F$ is the graph obtained from the disjoint union of the building blocks $A_1$ and $L_1$ together with the addition of the link-edges $y_1c$ and $x_1d$,  then  $\tau(F)=10$. Moreover, $F$ admits a unique vertex cover of cardinality $10$ and the link-vertices $a,b,w_1,z_1$ do not belong to this cover. 
\end{lemma}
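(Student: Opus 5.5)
The plan is to prove matching lower and upper bounds, and to derive uniqueness from the tightness of the lower bound. Everything follows from forcing arguments inside the two blocks.

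For the lower bound, the blocks $A_1$ and $L_1$ are vertex-disjoint subgraphs of $F$. By Lemma~\ref{lemma:VCsubgraph}, Lemma~\ref{prop:blanusa-first-block} and Lemma~\ref{prop:prop_18}, $\tau(F)\ge \tau(A_1)+\tau(L_1)=6+4=10$. Now let $S$ be any vertex cover of $F$ with $|S|=10$. Since $|S\cap V(A_1)|\ge 6$ and $|S\cap V(L_1)|\ge 4$, both must be equalities. By the uniqueness part of Lemma~\ref{prop:prop_18}, this gives $S\cap V(L_1)=\{u_1,v_1,s_1,t_1\}$. In particular $x_1,y_1\notin S$, so the link-edges $y_1c$ and $x_1d$ force $c,d\in S$. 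The same argument also shows $w_1,z_1\notin S$.

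Next I determine $S\cap V(A_1)$ using the two vertex-disjoint induced $5$-cycles from the proof of Lemma~\ref{prop:blanusa-first-block}. These are $V_l=\{a,e,b,g,f\}$, with edges $ae,eb,bg,gf,fa$, and $V_r=\{c,d,j,i,h\}$, with edges $cd,dj,ji,ih,hc$. Since $|S\cap V(A_1)|=6$ and each $5$-cycle needs at least $3$ vertices by Lemma~\ref{lemma:goldberg_cycle}, each cycle contains exactly $3$ vertices of $S$.
\begin{enumerate}[(i)]
\item In $V_r$ we already have $c,d\in S$. The edges $ji$ and $ih$ must be covered by a single further vertex, which forces $S\cap V_r=\{c,d,i\}$. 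Hence $h,j\notin S$.
\item The cross edges $fh$ and $gj$ then force $f,g\in S$.
\item The remaining edges $ae$ and $eb$ of $V_l$ must be covered by one more vertex, which forces $e$. So $S\cap V_l=\{e,f,g\}$, and $a,b\notin S$.
\end{enumerate}

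Thus the only candidate is $S=\{c,d,e,f,g,i,u_1,v_1,s_1,t_1\}$. For the upper bound it remains to check directly that this set covers all $13$ edges of $A_1$, all edges of $L_1$, and the two link-edges; this is a routine edge-by-edge check. That gives $\tau(F)\le 10$, and together with the forcing argument above, the cover of size $10$ is unique and avoids $a$, $b$, $w_1$ and $z_1$.

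The only delicate point is the bookkeeping that makes the counts tight: we need that $10=6+4$ forces equality in both blocks, so that the uniqueness in Lemma~\ref{prop:prop_18} applies, and then exactly $3$ vertices in each $5$-cycle. After that, every choice is forced.
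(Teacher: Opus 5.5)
Your proof is correct and follows the paper's argument. The lower bound $6+4$ from the disjoint blocks is tight, which forces $S\cap V(L_1)=\{u_1,v_1,s_1,t_1\}$ and hence $c,d\in S$. The only difference is the last step: the paper notes that $A_1-\{c,d\}$ is isomorphic to $L_j$ and reuses the uniqueness in Lemma~\ref{prop:prop_18} to get $\{e,f,g,i\}$, whereas you force the same four vertices directly through the two induced $5$-cycles and the cross edges $fh$ and $gj$, which works equally well.
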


\begin{proof}
By Lemmas~\ref{lemma:VCsubgraph},~\ref{prop:prop_18} and~\ref{prop:blanusa-first-block}, we have that $\tau(F) \geq \tau(A_1)+\tau(L_1) = 6+4=10$. Figure~\ref{fig:blanusa_1abc} shows $F$ with a vertex cover of cardinality 10. Therefore, $\tau(F)=10$. 

Let $S$ be a minimum vertex cover of $F$. Next, we prove that $S$ is unique and that $a,b,w_1,z_1 \not\in S$. By Lemmas~\ref{lemma:VCsubgraph},~\ref{prop:prop_18} and~\ref{prop:blanusa-first-block}, we have that $|S\cap L_1| = 4$ and $|S\cap A_1| = 6$. Moreover, also by Lemma~\ref{prop:prop_18}, we have that $w_1,x_1,y_1,z_1 \not\in S$. Since the link-edges $cy_1$ and $dx_1$ must be covered by $S$ and $y_1,x_1 \not\in S$, this implies that $c,d \in S$. Note that the subgraph $H \subset A_1$ induced by the vertices in the set $V(A_1)\backslash\{c,d\}$ is isomorphic to the block $L_j$. Since 4 vertices of $S$ cover the vertices of $L_1$ and the vertices $c$ and $d$ are also in $S$, it remains 4 vertices of $S$ to cover the graph $H$ that is isomorphic to $L_j$. By Lemma~\ref{prop:prop_18}, we obtain that there is a unique way to cover $H$ with 4 vertices that is by choosing the vertices $e,f,g,i$ to put in $S$. Therefore, $F$ has a unique vertex cover $S=\{e,f,g,i,c,d,u_1,v_1,s_1,t_1\}$ and $a,b,w_1,z_1 \not\in S$.
\end{proof}

\begin{figure}
\centering
\begin{tikzpicture}[
    scale=1.3,
    vertex/.style={circle, draw, fill=white, inner sep=0pt, minimum size=6pt, thick},
    covered/.style={vertex, fill=black},
    edge/.style={thick}
]

    \node[covered, label=left:$u_j$] (b1_u) at (0, 1) {}; 
    \node[covered, label=above:$v_j$] (b1_v) at (1, 2) {};
    \node[covered, label=below:$s_j$] (b1_s) at (1, 0) {};
    \node[covered, label=right:$t_j$] (b1_t) at (2, 1) {}; 

    \node[vertex, label=above:$x_j$] (b1_x) at (0, 2) {};
    \node[vertex, label=above:$w_j$] (b1_w) at (2, 2) {}; 
    \node[vertex, label=below:$y_j$] (b1_y) at (0, 0) {}; 
    \node[vertex, label=below:$z_j$] (b1_z) at (2, 0) {}; 

    \draw[edge] (b1_x) -- (b1_v) -- (b1_w);
    \draw[edge] (b1_y) -- (b1_s) -- (b1_z);
    \draw[edge] (b1_x) -- (b1_u) -- (b1_y);
    \draw[edge] (b1_w) -- (b1_t) -- (b1_z);
    \draw[edge] (b1_v) -- (b1_s);
    \draw[edge] (b1_u) -- (b1_t); 

    \begin{scope}[shift={(3.5,0)}]
        \node[covered, label=above:$x_{j+1}$] (b2_x) at (0, 2) {}; 
        \node[covered, label=below:$y_{j+1}$] (b2_y) at (0, 0) {};
        
        \node[vertex, label=left:$u_{j+1}$] (b2_u) at (0, 1) {}; 
        \node[vertex, label=above:$v_{j+1}$] (b2_v) at (1, 2) {}; 
        \node[covered, label=below:$s_{j+1}$] (b2_s) at (1, 0) {}; 
        \node[covered, label=right:$t_{j+1}$] (b2_t) at (2, 1) {};
        \node[covered, label=above:$w_{j+1}$] (b2_w) at (2, 2) {}; 
        \node[vertex, label=below:$z_{j+1}$] (b2_z) at (2, 0) {};
        
        \draw[edge] (b2_x) -- (b2_v) -- (b2_w);
        \draw[edge] (b2_y) -- (b2_s) -- (b2_z);
        \draw[edge] (b2_x) -- (b2_u) -- (b2_y);
        \draw[edge] (b2_w) -- (b2_t) -- (b2_z);
        \draw[edge] (b2_v) -- (b2_s);
        \draw[edge] (b2_u) -- (b2_t);
    \end{scope}

    \begin{scope}[shift={(7.0,0)}]
        \node[covered, label=above:$x_{j+2}$] (b3_x) at (0, 2) {}; 
        \node[vertex, label=below:$y_{j+2}$] (b3_y) at (0, 0) {}; 
        
        \node[covered, label=left:$u_{j+2}$] (b3_u) at (0, 1) {}; 
        \node[vertex, label=above:$v_{j+2}$] (b3_v) at (1, 2) {}; 
        \node[covered, label=below:$s_{j+2}$] (b3_s) at (1, 0) {}; 
        \node[vertex, label=right:$t_{j+2}$] (b3_t) at (2, 1) {}; 
        
        \node[covered, label=above:$w_{j+2}$] (b3_w) at (2, 2) {};
        \node[covered, label=below:$z_{j+2}$] (b3_z) at (2, 0) {}; 
        
        \draw[edge] (b3_x) -- (b3_v) -- (b3_w);
        \draw[edge] (b3_y) -- (b3_s) -- (b3_z);
        \draw[edge] (b3_x) -- (b3_u) -- (b3_y);
        \draw[edge] (b3_w) -- (b3_t) -- (b3_z);
        \draw[edge] (b3_v) -- (b3_s);
        \draw[edge] (b3_u) -- (b3_t);
    \end{scope}

    \draw[edge] (b1_w) -- (b2_y);
    \draw[edge] (b1_z) -- (b2_x);

    \draw[edge] (b2_w) -- (b3_y);
    \draw[edge] (b2_z) -- (b3_x);
\end{tikzpicture}
\caption{Graph $G$ with a minimum vertex cover indicated by the vertices in black color.}
\label{fig:blanusa_three_blocks321}
\end{figure}

Now, we are ready to present the lower bound on $\tau(B_i^1)$.

\begin{lemma}
\label{lemma:blanusa-snark-proof}
$\tau(B_i^1) \geq \left\lceil \frac{14i + 17}{3} \right\rceil$ for all $i \geq 1$.
\end{lemma}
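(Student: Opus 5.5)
The plan is to partition $V(B_i^1)$ into vertex-disjoint pieces whose vertex cover numbers are already known, and then apply Lemma~\ref{lemma:VCsubgraph}. The natural pieces are the subgraph $F$ of Lemma~\ref{lemma:Fgraph}, formed by $A_1$, $L_1$ and the link-edges $y_1c$, $x_1d$, which has $\tau(F)=10$, together with consecutive triples $L_j,L_{j+1},L_{j+2}$ joined by their link-edges, each of which has vertex cover number $14$ by Lemma~\ref{prop:prop_19}.

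Write $i-1 = 3q + r$ with $r\in\{0,1,2\}$. The blocks $L_2,\ldots,L_i$ split into $q$ consecutive triples and $r$ leftover single blocks, each leftover block contributing $\tau(L_j)=4$ by Lemma~\ref{prop:prop_18}. Lemma~\ref{lemma:VCsubgraph} then gives
\[
\tau(B_i^1)\ \ge\ 10 + 14q + 4r .
\]
The target is $\lceil (14i+17)/3\rceil = \lceil (14(3q+r+1)+17)/3\rceil = 14q + 10 + \lceil (14r+1)/3\rceil$.
\begin{itemize}
\item For $r=0$ the target is $14q+11$, so the bound above is short by exactly $1$.
\item For $r=1$ the target is $14q+15$, against $14q+14$ available: again short by $1$.
\item For $r=2$ the target is $14q+20$, against $14q+18$ available: short by $2$.
\end{itemize}
So the disjoint-pieces count alone falls short by $1$ or $2$, and the remaining work is to extract these extra units from the link-edges between pieces. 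This is where the uniqueness and "moreover" parts of Lemmas~\ref{prop:prop_18}, \ref{lemma:5VC}, \ref{prop:prop_19} and~\ref{lemma:Fgraph} enter.

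The extra unit(s) come from a contradiction argument. Suppose a cover $S$ attains the bare bound, so every piece is covered at exactly its minimum. By Lemma~\ref{lemma:Fgraph}, $S\cap V(F)$ is then the unique cover of size $10$, so $w_1,z_1\notin S$. Hence the link-edges $w_1y_2$ and $z_1x_2$ force $x_2,y_2\in S$. For the first triple $L_2,L_3,L_4$, now covered by exactly $14$ vertices with $x_2,y_2\in S$, the second part of Lemma~\ref{prop:prop_19} gives $w_4,z_4\notin S$. This forces $x_5,y_5\in S$, and the argument propagates inductively along all triples. At the end of the chain we reach the closing link-edges $z_ia$ and $w_ib$, while $a,b\notin S$ by Lemma~\ref{lemma:Fgraph}.
\begin{itemize}
\item If $r=0$, the last block $L_i$ is the end of a triple, so $z_i,w_i\notin S$ and the edge $z_ia$ is uncovered, a contradiction. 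This yields the missing $+1$.
\item If $r=1$, the leftover block $L_i$ must contain $x_i,y_i$ (forced by the propagation) and $z_i,w_i$ (forced by the closing edges $z_ia$, $w_ib$). A $4$-cover of $L_i$ cannot contain link-vertices, by Lemma~\ref{prop:prop_18}, and a $5$-cover cannot contain all four, by Lemma~\ref{lemma:5VC}. So $L_i$ needs at least $6$ vertices rather than $4$, which more than covers the deficit.
\end{itemize}

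For $r=2$ the deficit is $2$, so I would make an accounting argument rather than a single contradiction. Let the excess of a piece be the number of vertices of $S$ it uses above its minimum. Each link-edge between two pieces both covered at their minimum, with the relevant link-vertices excluded, forces excess somewhere. I would show that the excess accumulated along the chain $F$, the triples, $L_{i-1}$, $L_i$, back to $F$ is at least $2$. Concretely, $L_{i-1}$ and $L_i$ both receive link-vertices from both sides, so Lemmas~\ref{prop:prop_18} and~\ref{lemma:5VC} force each to pay at least $1$ extra unless some earlier piece pays instead. A case split on where the first excess unit occurs then shows a second one is still needed.

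This bookkeeping, and more generally making the propagation rigorous when some excess unit appears in the middle (which could "reset" the chain), is the step I expect to be the main obstacle. The cleanest route is to do induction on $q$ with the invariant "either $x,y$ of the next block are in $S$, or one extra unit has already been spent".
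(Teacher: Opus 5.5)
Your cases $r=0$ and $r=1$ are correct. For $r=0$ ($i\equiv 1\pmod 3$) this is the paper's argument. Your observation that a cover of size $14q+10$ puts every piece at its minimum is what justifies applying the second part of Lemma~\ref{prop:prop_19} to each triple; the paper's version, phrased as $|S\cap V(F)|\ge 11$, uses this only tacitly. For $r=1$ ($i\equiv 2\pmod 3$) your propagation also works, although the paper needs none. It takes $A_1$, the $(i-2)/3$ triples starting at $L_1$, and the pair $L_{i-1}\cup L_i$. The pair has vertex cover number at least $9$, because two unique $4$-covers would leave $w_{i-1}y_i$ and $z_{i-1}x_i$ uncovered. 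This gives $6+14(i-2)/3+9=(14i+17)/3$ directly.

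The gap is $r=2$, i.e.\ $i\equiv 0\pmod 3$. There you give only a plan, and the difficulty you flag is real: once one unit of excess may sit anywhere, your chain can restart, and nothing in the sketch shows that a second unit is forced. But the deficit of $2$ is an artifact of your decomposition. Drop $F$ and group $L_1,\ldots,L_i$ into $i/3$ consecutive triples. Then Lemmas~\ref{lemma:VCsubgraph}, \ref{prop:blanusa-first-block} and~\ref{prop:prop_19} give $\tau(B_i^1)\ge 6+14i/3=(14i+18)/3=\lceil(14i+17)/3\rceil$ with no link-edge argument at all; this is the paper's Case~1. Alternatively, keep $F$ and the triples $L_2L_3L_4,\ldots$, but treat $L_{i-1}\cup L_i$ as a single piece with $\tau\ge 9$. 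The deficit becomes $1$, so a cover of that size has every piece at its minimum. Your propagation then gives $x_{i-1},y_{i-1}\in S$, while $a,b\notin S$ forces $w_i,z_i\in S$. By Lemma~\ref{prop:prop_18}, each of $L_{i-1}$ and $L_i$ then contains at least $5$ vertices of $S$, i.e.\ at least $10>9$ in total, a contradiction.
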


\begin{proof}
Let $i\geq 1$ be an integer and let $B_i^1$ be a generalized Blanu\v{s}a snark. Recall that $B_i^1$ is formed by building blocks $A_1$ and $L_1, L_2, \ldots, L_i$. We split the proof into three cases depending on the value of $i$ modulo 3.

\medskip 

\noindent \textit{Case 1:} $i \equiv 0\pmod{3}$.  
For every $j \in \{1,2,\ldots,i/3\}$, let $H_j$ be the subgraph of $B_i^1$ induced by the vertex set $V(L_{3(j-1)+1})\cup V(L_{3(j-1)+2}) \cup V(L_{3(j-1)+3})$. By Lemma~\ref{prop:prop_19}, for every $j \in \{1,2,\ldots,i/3\}$, we have that $\tau(H_j)\geq 14$. Moreover, by Lemma~\ref{prop:blanusa-first-block}, we know that $\tau(A_1)=6$. Note that, by definition, the subgraphs $H_1, H_2, \ldots, H_{i/3}$ are pairwise vertex-disjoint. By Lemma~\ref{lemma:VCsubgraph}, we obtain that $\tau(B_i^1) \geq \tau(A_1)+\sum_{j=1}^{i/3}\tau(H_j) \geq 6 + 14(i/3) = \left\lceil \frac{14i + 17}{3} \right\rceil$.

\medskip

\noindent \textit{Case 2:} $i\equiv 2\pmod{3}$. For every $j \in \{1,2,\ldots,(i-2)/3\}$, let $H_j$ be the subgraph of $B_i^1$ induced by the vertex set $V(L_{3(j-1)+1})\cup V(L_{3(j-1)+2}) \cup V(L_{3(j-1)+3})$. By Lemma~\ref{prop:prop_19}, for every $j \in \{1,2,\ldots,(i-2)/3\}$, we have that $\tau(H_j)\geq 14$. Moreover, by Lemma~\ref{prop:blanusa-first-block}, we know that $\tau(A_1)=6$. Note that the subgraphs $H_1, H_2, \ldots, H_{(i-2)/3}$ are pairwise vertex-disjoint and these subgraphs do not contain the building blocks $A_1$, $L_{i-1}$ and $L_i$. By Lemma~\ref{prop:prop_18}, $\tau(L_{i-1})=4$ and $\tau(L_i)=4$ and the blocks $L_{i-1}$ and $L_i$ have a unique vertex cover of cardinality 4 that is illustrated in Figure~\ref{fig:blanusa_block_cover} (note that the link-vertices of the block do not belong to its vertex cover). However, note that a vertex cover of $B_i^1$, restricted to the subgraph induced by $V(L_{i-1})\cup V(L_i)$, must contain at least 5 vertices of either $L_{i-1}$ or $L_i$ since, otherwise, the link-edges $w_{i-1}y_i$ and $z_{i-1}x_i$ would not be covered. Therefore, we obtain that $\tau(L_i\cup L_{i-1}) \geq 4+5 = 9$. By Lemma~\ref{lemma:VCsubgraph}, we obtain that $\tau(B_i^1) \geq \tau(A_1) + \left[\sum_{j=1}^{(i-2)/3}\tau(H_j)\right] + \tau(G[V(L_i)\cup V(L_{i-1})]) \geq 6 + 14((i-2)/3) + 9 = \left\lceil \frac{14i + 17}{3} \right\rceil$.

\medskip

\noindent \textit{Case 3:} $i\equiv 1\pmod{3}$. For every $j \in \{1,2,\ldots,(i-1)/3\}$, let $H_j$ be the subgraph of $B_i^1$ induced by the vertex set $V(L_{3(j-1)+2})\cup V(L_{3(j-1)+3}) \cup V(L_{3(j-1)+4})$. By Lemma~\ref{prop:prop_19}, for every $j \in \{1,2,\ldots,(i-1)/3\}$, we have that $\tau(H_j)\geq 14$.

Let $S$ be a minimum vertex cover of $B_i^1$. Let $F$ be the subgraph of $B_i^1$ obtained from the disjoint union of the building blocks $A_1$ and $L_1$ along with the link-edges $y_1c$ and $x_1d$. We claim that $|S \cap V(F)| \geq 11$.  

Suppose, for the sake of contradiction, that $|S \cap V(F)| \leq 10$. By Lemma~\ref{lemma:Fgraph}, we have that $\tau(F)=10$. These facts imply that $|S \cap V(F)| = 10$. Moreover, also by Lemma~\ref{lemma:Fgraph}, we know that the link-vertices $a,b,w_1,z_1$ of $F$ do not belong to  $S$. If $i=1$, then we reach a contradiction since the link-edges $az_1$ and $bw_1$ are not covered by $S$. 

Thus, from now on, we may assume that $i \geq 4$. 
Note that, in order to cover the link-edges $z_1x_2$ and $w_1y_2$, the vertices $x_2$ and $y_2$ of the subgraph $H_1$ must belong to $S$. By Lemma~\ref{prop:prop_19}, since $x_2,y_2\in S$, we obtain that $w_4,z_4 \not\in S$. More generally, for each $j \in \{1,2,\ldots,(i-1)/3\}$, in order to cover the link-edges $z_{3(j-1)+1}x_{3(j-1)+2}$ and $w_{3(j-1)+1}y_{3(j-1)+2}$, the vertices $x_{3(j-1)+2}$ and $y_{3(j-1)+2}$ of the subgraph $H_j$ must belong to $S$. By Lemma~\ref{prop:prop_19}, since $x_{3(j-1)+2},y_{3(j-1)+2}\in S$, we obtain that $w_{3(j-1)+4},z_{3(j-1)+4} \not\in S$. When $j = (i-1)/3$, this implies that the link-vertices $w_i$ and $z_i$ do not belong to $S$. However, since the link-vertices $a$ and $b$ also do not belong to $S$, we obtain that the link-edges $az_i$ and $bw_i$ are not covered by $S$, which contradicts the fact that $S$ is a vertex cover of $B_i^1$. Therefore, we conclude that $|S \cap V(F)|\geq 11$.

By Lemma~\ref{lemma:VCsubgraph}, we obtain that $\tau(B_i^1) \geq \tau(F) + \left[\sum_{j=1}^{(i-1)/3}\tau(H_j)\right] \geq 11 + 14((i-1)/3) = \left\lceil \frac{14i + 17}{3} \right\rceil$.
\end{proof}

\begin{figure}
\centering
\begin{tikzpicture}[scale=0.6]
    \node[vertex, label=left:$a$] (a) at (0, 3) {};
    \node[covered, label=left:$e$] (l_mid) at (0, 1.5) {}; 
    \node[vertex, label=left:$b$] (b) at (0, 0) {};
    
    \node[covered, label=above:$f$] (cross_top) at (1, 3) {}; 
    \node[covered, label=below:$g$] (cross_bot) at (1, 0) {}; 

    \node[vertex, label=above:$h$] (c_top_l) at (2, 3) {}; 
    \node[covered, label=above right:$i$] (c_mid_l) at (2, 1.5) {};
    \node[vertex, label=below:$j$] (c_bot_l) at (2, 0) {};
    
    \node[covered, label=above:$c$] (c) at (4, 3) {};
    \node[covered, label=below:$d$] (d) at (4, 0) {};

    \draw[edge] (cross_top) -- (cross_bot);
    \draw[edge] (a) -- (cross_top) -- (c_top_l) -- (c);
    \draw[edge] (l_mid) -- (c_mid_l);
    \draw[edge] (b) -- (cross_bot) -- (c_bot_l) -- (d);
    \draw[edge] (a) -- (l_mid) -- (b);
    \draw[edge] (c_top_l) -- (c_mid_l) -- (c_bot_l);
    \draw[edge] (c) -- (d);
    
    \begin{scope}[xshift=6cm]
        \node[vertex, label=above:$x_1$] (x1) at (0, 3) {};  
        \node[covered, label=above right:$u_1$] (u1) at (0, 1.5) {};
        \node[vertex, label=below:$y_1$] (y1) at (0, 0) {};
        
        \node[covered, label=above:$v_1$] (v1) at (2, 3) {}; 
        \node[covered, label=below:$s_1$] (s1) at (2, 0) {};
        
        \node[vertex, label=above:$w_1$] (w1) at (4, 3) {};
        \node[covered, label=right:$t_1$] (t1) at (4, 1.5) {}; 
        \node[vertex, label=below:$z_1$] (z1) at (4, 0) {};
        
        \draw[edge] (x1) -- (v1) -- (w1);
        \draw[edge] (u1) -- (t1); 
        \draw[edge] (y1) -- (s1) -- (z1);
        \draw[edge] (x1) -- (u1) -- (y1);
        \draw[edge] (v1) -- (s1);
        \draw[edge] (w1) -- (t1) -- (z1);
    \end{scope}

    \draw[edge] (c) -- (y1);
    \draw[edge] (d) -- (x1);
\end{tikzpicture}
\caption{Graph $F$ with its minimum vertex cover represented by the black vertices.}
\label{fig:blanusa_1abc}
\end{figure}

The next theorem establishes the vertex cover number for the members of the first family of generalized Blanu\v{s}a snarks.

\begin{theorem}
\label{thm:blanusa-snark-proof-final}
$\tau(B_i^1) = \left\lceil \frac{14i + 17}{3} \right\rceil$ for all $i \geq 1$.
\end{theorem}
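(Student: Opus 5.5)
The lower bound $\tau(B_i^1)\ge\lceil (14i+17)/3\rceil$ is Lemma~\ref{lemma:blanusa-snark-proof}, so it remains to show the matching upper bound. I will do this by exhibiting, for each residue of $i$ modulo $3$, an explicit vertex cover of exactly that size. The covers are assembled from a few local patterns on the building blocks, and then I check that every link-edge is covered.

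\textbf{Local patterns.} On a block $L_j$ I use three patterns:
\begin{itemize}
\item $T_4=\{u_j,v_j,s_j,t_j\}$, the unique minimum cover of Lemma~\ref{prop:prop_18}; it contains no link-vertex.
\item $T_5^a=\{x_j,y_j,s_j,t_j,w_j\}$.
\item $T_5^b=\{x_j,u_j,s_j,w_j,z_j\}$.
\end{itemize}
These are exactly the three patterns of Figure~\ref{fig:blanusa_three_blocks321}, and each covers all edges of $L_j$; checking this is routine, one edge at a time.

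On $A_1$ I use two covers of size $6$:
\begin{itemize}
\item $A=\{c,d,e,f,g,i\}$, which is the cover of Figure~\ref{fig:blanusa_1abc}.
\item $A'=\{a,b,c,f,i,j\}$, which is the cover of Figure~\ref{fig:A1VC}.
\end{itemize}

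\textbf{Compatibility rules.} A link-edge $w_jy_{j+1}$ or $z_jx_{j+1}$ is covered whenever the relevant endpoint lies in one of the two patterns. Reading off the memberships gives the allowed transitions $T_4\to T_5^a$, $T_5^a\to T_5^b$ and $T_5^b\to T_4$:
\begin{itemize}
\item $T_4\to T_5^a$ works because $T_5^a$ contains $x,y$.
\item $T_5^a\to T_5^b$ works because $w_j$ covers $w_jy_{j+1}$ and $x_{j+1}$ covers $z_jx_{j+1}$.
\item $T_5^b\to T_4$ works because $T_5^b$ contains $w,z$.
\end{itemize}
Hence the block triple $(T_4,T_5^a,T_5^b)$ has cost $14$ and can be repeated, which is the periodic part of every construction.

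At the $A_1$ end, the edges $y_1c$ and $x_1d$ are covered as follows:
\begin{itemize}
\item with $A$, because $c,d\in A$;
\item with $A'$, provided $x_1\in S$, since $c\in A'$ covers $y_1c$.
\end{itemize}
The closing edges $az_i$ and $bw_i$ are covered as follows:
\begin{itemize}
\item with $A'$, because $a,b\in A'$;
\item otherwise, provided $w_i,z_i\in S$.
\end{itemize}

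\textbf{The three cases.}

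\emph{Case $i\equiv 0 \pmod 3$.} Take $A$ together with $(T_4,T_5^a,T_5^b)$ repeated $i/3$ times. Then $L_1$ is $T_4$, which is fine since $c,d\in A$. The last block is $T_5^b$, which contains $w_i,z_i$. The size is $6+14i/3$.

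\emph{Case $i\equiv 1 \pmod 3$.} Take $A'$, then $L_1=T_5^b$ (which contains $x_1$), then $(T_4,T_5^a,T_5^b)$ repeated $(i-1)/3$ times. The closing edges are covered by $a,b\in A'$. The size is $6+5+14(i-1)/3$.

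\emph{Case $i\equiv 2 \pmod 3$.} Take $A'$, then $L_1=T_5^b$, then $(T_4,T_5^a,T_5^b)$ repeated $(i-2)/3$ times, and finally $L_i=T_4$. This last block is admissible because it follows a $T_5^b$ block and $a,b\in A'$. The size is $6+5+14(i-2)/3+4$.

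In each case a direct computation shows the size equals $\lceil(14i+17)/3\rceil$, which matches the lower bound and proves the theorem.

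\textbf{Main obstacle.} The delicate part is the case $i\equiv 2$. Any cover attaining the bound must put $6$ vertices on $A_1$, and the obvious cover $A$ (the one containing $c,d$) leaves $a$ and $b$ uncovered. That in turn forces both $w_i$ and $z_i$ into $S$ at the end, which the $4+5$ budget on the last blocks cannot afford by Lemma~\ref{lemma:5VC}. The fix is to switch to the alternative cover $A'$, which contains $a,b$, and to absorb the missing vertex $d$ by placing $x_1\in S$ through the pattern $T_5^b$. I would present these pattern checks with a figure for small $i$ and verify every edge type explicitly.
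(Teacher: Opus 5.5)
Your proposal is correct and follows the paper's proof. The lower bound is quoted from Lemma~\ref{lemma:blanusa-snark-proof}, and the upper bound uses exactly the paper's block patterns (your $T_5^b$, $T_4$, $T_5^a$ are the paper's patterns for $j\equiv 1,2,0 \pmod 3$) together with the cover $\{a,b,c,f,i,j\}$ of $A_1$, which reproduces the paper's set verbatim when $i\equiv 1,2 \pmod 3$. The only deviation is for $i\equiv 0 \pmod 3$: you use $\{c,d,e,f,g,i\}$ on $A_1$ and shift the period so the last block is $T_5^b$, whereas the paper keeps $\{a,b,c,f,i,j\}$ for every $i$; both choices are valid vertex covers of size $6+14i/3$.
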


\begin{proof}
Let $i\geq 1$ be an integer and let $B_i^1$ be a generalized Blanu\v{s}a snark. By Lemma~\ref{lemma:blanusa-snark-proof}, $\tau(B_i^1) \geq \left\lceil \frac{14i + 17}{3} \right\rceil$ for all $i \geq 1$. So, in order to prove the statement, next we construct a vertex cover $S$ for $B_i^1$ with $|S| = \left\lceil \frac{14i + 17}{3} \right\rceil$.  

We define the set $S = S_0 \cup S_1 \cup S_2 \cup S_4$, where for all $1 \leq j \leq i$, $S_0 = \{ s_j, t_j, w_j, x_j, y_j \mid j \equiv 0 \pmod 3 \}$, $S_1 = \{ s_j, u_j, w_j, x_j, z_j \mid j \equiv 1 \pmod 3 \}$, $S_2 = \{ s_j, t_j, u_j, v_j \mid j \equiv 2 \pmod 3 \}$ and $S_4 = \{a,b,c,f,i,j\}$. 

It can be checked by inspection of Figure~\ref{fig:CVAllblocks} that the edges of the blocks $A_1$ and $L_j$ for all $1\leq j \leq i$ are covered by $S$. Thus, in order to prove that $S$ is a vertex cover, it remains to show that all link-edges are covered by $S$. First, note that the link-edges $cy_1$ and $dx_1$ are covered by vertices $c,x_1 \in S$. The link-edges $az_i$ and $bw_i$ are covered by $a,b \in S$. For two link-edges $w_iy_{i+1}$ and $z_ix_{i+1}$, that connect two consecutive blocks $L_i$ and $L_{i+1}$, we have that: (1) if $i\equiv 1\pmod{3}$, then $w_i,z_i\in S$ ; (2) if $i\equiv 2\pmod{3}$, then $x_{i+1},y_{i+1}\in S$; (3) if $i\equiv 0\pmod{3}$, then $w_{i},x_{i+1}\in S$. Thus, the link-edges that connect two consecutive blocks $L_i$ and $L_{i+1}$ are covered by vertices of $S$. Since all edges of $B_i^1$ were shown to be covered by $S$, we conclude that $S$ is a vertex cover of $B_i^1$.

Next, we calculate the cardinality of $S$. Let $k = i\bmod 3$. For every $j \in \{1,2,\ldots,(i-k)/3\}$, let $H_j$ be the subgraph of $B_i^1$ induced by the vertex set $V(L_{3(j-1)+1})\cup V(L_{3(j-1)+2}) \cup V(L_{3(j-1)+3})$.

The graph $B_i^1$ is formed by one block $A_1$, that is covered by the set $S_4$ previously defined. Also, $B_i^1$ is formed by $(i-k)/3$ vertex-disjoint subgraphs $H_j$, that are covered by vertices from the sets $\{s_{3(j-1)+1}, u_{3(j-1)+1}, w_{3(j-1)+1}, x_{3(j-1)+1}$, $z_{3(j-1)+1}\} \subset S_1$, $\{ s_{3(j-1)+2}, t_{3(j-1)+2}, u_{3(j-1)+2}, v_{3(j-1)+2} \} \subset S_2$ and $\{ s_{3(j-1)+3}, t_{3(j-1)+3}, w_{3(j-1)+3}, x_{3(j-1)+3}$, $y_{3(j-1)+3} \} \subset S_0$. This implies that each subgraph $H_j$ contains 14 vertices of $S$. 

If $k=1$, then $B_i^1$ has one additional block $L_i$ that is covered by the vertices $\{s_i, u_i, w_i, x_i$, $z_i\} \subset S_1$. If $k=2$, then $B_i^1$ has two additional blocks $L_i$ and $L_{i+1}$ that are covered by the vertices $\{s_{i-1}, u_{i-1}, w_{i-1}, x_{i-1}$, $z_{i-1}\} \subset S_1$ and $\{ s_i, t_i, u_i, v_i \} \subset S_2$, respectively. 

Note that $i = 3t+k$ for $t \in \mathbb{Z}$ and $k \in \{0,1,2\}$.

If $k = 0$, then $|S| = |S_4| + 14\cdot \frac{(i-k)}{3} = 6+14\cdot \frac{i}{3} = \frac{14i+18}{3} = \frac{14\cdot 3t+18}{3} = \left\lceil \frac{14\cdot 3t + 17}{3} \right\rceil = \left\lceil \frac{14i + 17}{3} \right\rceil$.

If $k = 1$, then $|S| = |S_4| + 14\cdot \frac{(i-k)}{3} + 5 = 6+14\cdot \frac{i-1}{3}+5 = \frac{14i+19}{3} = \frac{14(3t+1)+19}{3} = \left\lceil \frac{14(3t+1) + 17}{3} \right\rceil = \left\lceil \frac{14i + 17}{3} \right\rceil$.

If $k=2$, then $|S| = |S_4| + 14\cdot \frac{(i-k)}{3} + 9 = 6+14\cdot \frac{i-2}{3}+9 = \frac{14i+17}{3} = \frac{14(3t+2)+17}{3} = \left\lceil \frac{14(3t+2) + 17}{3} \right\rceil = \left\lceil \frac{14i + 17}{3} \right\rceil$.

This concludes the proof.
\end{proof}

\begin{figure}
\centering
\begin{subfigure}[b]{0.25\textwidth}
\centering
\begin{tikzpicture}[
    vertex/.style={circle, draw, fill=white, inner sep=2pt, thick},
    edge/.style={thick},
    scale=.9
]

\node[covered, label=above:$a$] (a) at (0, 2) {};
\node[vertex, label=left:$e$] (v01) at (0, 1) {};
\node[covered, label=below:$b$] (b) at (0, 0) {};

\node[covered, label=above:$f$] (v12) at (1, 2) {};
\node[vertex, label=below:$g$] (v10) at (1, 0) {};

\node[vertex, label=above:$h$] (v22) at (2, 2) {};
\node[covered, label=right:$i$] (v21) at (2, 1) {};
\node[covered, label=below:$j$] (v20) at (2, 0) {};

\node[covered, label=above:$c$] (c) at (3, 2) {};
\node[vertex, label=below:$d$] (d) at (3, 0) {};

\draw[edge] (a) -- (v12) -- (v22) -- (c); 
\draw[edge] (v01) -- (v21);     
\draw[edge] (b) -- (v10) -- (v20) -- (d); 

\draw[edge] (a) -- (v01) -- (b);         
\draw[edge] (v12) -- (v10);    
\draw[edge] (v22) -- (v21) -- (v20);    
\draw[edge] (c) -- (d);                 
\end{tikzpicture}
\caption{Block $A_1$ with vertex cover $S_4 = \{a,b,c,f,i,j\}$.}
\label{fig:blockCV1}
\end{subfigure}
\hspace{.07cm}
\begin{subfigure}[b]{0.2\textwidth}
\centering
\begin{tikzpicture}[
    vertex/.style={circle, draw, fill=white, inner sep=2pt, thick},
    edge/.style={thick},
    scale=.9
]

\node[covered, label=above:{$x_j$}] (x) at (0, 2) {}; 
\node[covered, label=left:{$u_j$}] (v01) at (0, 1) {};
\node[vertex, label=below:{$y_j$}] (y) at (0, 0) {};

\node[vertex, label=above:{$v_j$}] (v12) at (1, 2) {};
\node[covered, label=below:{$s_j$}] (v10) at (1, 0) {};

\node[covered, label=above:{$w_j$}] (w) at (2, 2) {};
\node[vertex, label=right:{$t_j$}] (v21) at (2, 1) {};
\node[covered, label=below:{$z_j$}] (z) at (2, 0) {};

\draw[edge] (x) -- (v12) -- (w); 
\draw[edge] (v01) -- (v21);        
\draw[edge] (y) -- (v10) -- (z); 

\draw[edge] (x) -- (v01) -- (y);            
\draw[edge] (v12) -- (v10);        
\draw[edge] (w) -- (v21) -- (z);        
\end{tikzpicture}
\caption{Vertex cover of block $L_j$ with $j\equiv 1\pmod{3}$.}
\label{fig:blockCV2}
\end{subfigure}
\hspace{.07cm}
\begin{subfigure}[b]{0.2\textwidth}
\centering
\begin{tikzpicture}[
    vertex/.style={circle, draw, fill=white, inner sep=2pt, thick},
    edge/.style={thick},
    scale=.9
]

\node[vertex, label=above:{$x_j$}] (x) at (0, 2) {}; 
\node[covered, label=left:{$u_j$}] (v01) at (0, 1) {};
\node[vertex, label=below:{$y_j$}] (y) at (0, 0) {};

\node[covered, label=above:{$v_j$}] (v12) at (1, 2) {};
\node[covered, label=below:{$s_j$}] (v10) at (1, 0) {};

\node[vertex, label=above:{$w_j$}] (w) at (2, 2) {};
\node[covered, label=right:{$t_j$}] (v21) at (2, 1) {};
\node[vertex, label=below:{$z_j$}] (z) at (2, 0) {};

\draw[edge] (x) -- (v12) -- (w); 
\draw[edge] (v01) -- (v21);        
\draw[edge] (y) -- (v10) -- (z); 

\draw[edge] (x) -- (v01) -- (y);            
\draw[edge] (v12) -- (v10);        
\draw[edge] (w) -- (v21) -- (z);        
\end{tikzpicture}
\caption{Vertex cover of block $L_j$ with $j\equiv 2\pmod{3}$.}
\label{fig:blockCV3}
\end{subfigure}
\hspace{.07cm}
\begin{subfigure}[b]{0.2\textwidth}
\centering
\begin{tikzpicture}[
    vertex/.style={circle, draw, fill=white, inner sep=2pt, thick},
    edge/.style={thick},
    scale=.9
]

\node[covered, label=above:{$x_j$}] (x) at (0, 2) {}; 
\node[vertex, label=left:{$u_j$}] (v01) at (0, 1) {};
\node[covered, label=below:{$y_j$}] (y) at (0, 0) {};

\node[vertex, label=above:{$v_j$}] (v12) at (1, 2) {};
\node[covered, label=below:{$s_j$}] (v10) at (1, 0) {};

\node[covered, label=above:{$w_j$}] (w) at (2, 2) {};
\node[covered, label=right:{$t_j$}] (v21) at (2, 1) {};
\node[vertex, label=below:{$z_j$}] (z) at (2, 0) {};

\draw[edge] (x) -- (v12) -- (w); 
\draw[edge] (v01) -- (v21);        
\draw[edge] (y) -- (v10) -- (z); 

\draw[edge] (x) -- (v01) -- (y);            
\draw[edge] (v12) -- (v10);        
\draw[edge] (w) -- (v21) -- (z);        
\end{tikzpicture}
\caption{Vertex cover of block $L_j$ with $j\equiv 0\pmod{3}$.}
\label{fig:blockCV4}
\end{subfigure}
\caption{The vertex cover of block $A_1$ and three distinct vertex covers of block $L_j$, depending on the value of $j$ modulo 3. The vertices of the vertex cover are colored in black.}
\label{fig:CVAllblocks}
\end{figure}

\section{Loupekine Snarks}

In 1976, F.~Loupekine proposed another method for constructing infinite families of snarks from pre-existing snarks. This method was originally described by R. Isaacs~\cite{Isaacs1976}. In this section, we present the construction of a specific family of snarks obtained from the method proposed by Loupekine. Next, we define the members of this family.

A \emph{building block}, denoted by $B_i$, is obtained by removing the vertices of any induced path $P_3$ from the Petersen graph, as shown in Figure~\ref{fig:petersen_sub}, which shows the Petersen graph $G$, and Figure~\ref{fig:building_block_sub}, which shows the resulting building block $B_i$.

A Loupekine snark is composed of an odd number $n \geq 3$ of building blocks $B_0, B_1, \ldots, B_{n-1}$ disposed in cyclic order. For each index $i \in \{0, 1, \ldots, n-1\}$, two consecutive blocks $B_i$ and $B_{i+1}$ in this order are joined by a set of \emph{link-edges}, denoted by $E_{i,i+1}$, which are either \emph{parallel edges} $\{s_i r_{i+1}, v_i u_{i+1}\}$ or \emph{crossing  edges} $\{s_i u_{i+1}, v_i r_{i+1}\}$. Figure~\ref{fig:loupekine_direct} shows consecutive blocks connected via parallel  edges, and Figure~\ref{fig:loupekine_crossed} shows consecutive blocks connected via crossing edges.

At this stage of the construction, the graph formed by the $n$ blocks and their link-edges $\left(\bigcup_{i=0}^{n-2} E_{i,i+1}\right) \cup E_{n-1,0}$ is not yet cubic, since each vertex $t_i$ has degree 2. In order to turn it into a cubic graph, the blocks are partitioned into subsets of size two or three. For each subset of two blocks, $B_j$ and $B_l$, we add the edge $t_j t_l$. For each subset of three blocks, $B_j, B_l$, and $B_m$, we introduce a new auxiliary vertex $z_j$ and add the edges $t_j z_j$, $t_l z_j$, and $t_m z_j$. All these new edges are called \emph{upper link-edges}. 

If the partition consists exclusively of blocks with consecutive indices, the resulting graphs belong to the $LP_0$ snark family. If the partition allows arbitrary groupings of two or three blocks, the graphs belong to the $LP_1$ snark family. Examples of both families are shown in Figure~\ref{fig:loupekine_lp0}, illustrating an $LP_0$ snark with five building blocks, and Figure~\ref{fig:loupekine_lp1}, illustrating an $LP_1$ snark with five building blocks.

By definition, adjacent blocks in $LP_0$ and $LP_1$ snarks can be connected by either parallel or crossing edges. In this work, we investigate a specific subclass which we call \emph{parallel-crossed snarks}. A snark in this subclass satisfies the following condition: each block $B_i$ is connected to $B_{i+1}$ via parallel edges if $i \equiv 0, 2 \pmod{4}$, and via crossing edges if $i \equiv 1, 3 \pmod{4}$. We denote a parallel-crossed snark on $n$ building blocks by $L_n$.

\begin{figure}
\centering
\begin{subfigure}[b]{0.45\textwidth}
\centering
\begin{tikzpicture}[
    vertex/.style={circle, draw, fill=white, inner sep=2pt, minimum size=6pt},
    edge/.style={thick},
    dashed edge/.style={thick, dashed},
    scale=0.75
]
    \def\Rin{1.2}  
    \def\Rout{2.2} 

    \node[vertex, label={right:$t$}] (t) at (90:\Rin) {};
    \node[vertex, label={below:$r$}] (r) at (162:\Rin) {};
    \node[vertex, label={left:$p$}] (p) at (234:\Rin) {};
    \node[vertex, label={right:$q$}] (q) at (306:\Rin) {};
    \node[vertex, label={below:$s$}] (s) at (18:\Rin) {};

    \node[vertex] (top_out) at (90:\Rout) {};
    \node[vertex] (left_out) at (162:\Rout) {};
    \node[vertex, label={left:$u$}] (u) at (234:\Rout) {};
    \node[vertex, label={right:$v$}] (v) at (306:\Rout) {};
    \node[vertex] (right_out) at (18:\Rout) {};

    \draw[edge] (t) -- (p);
    \draw[edge] (p) -- (s);
    \draw[edge] (s) -- (r);
    \draw[edge] (r) -- (q);
    \draw[edge] (q) -- (t);

    \draw[edge] (t) -- (top_out);
    \draw[edge] (r) -- (left_out);
    \draw[edge] (p) -- (u);
    \draw[edge] (q) -- (v);
    \draw[edge] (s) -- (right_out);

    \draw[dashed edge] (top_out) -- (left_out); 
    \draw[dashed edge] (top_out) -- (right_out);
    \draw[edge] (left_out) -- (u);
    \draw[edge] (u) -- (v);
    \draw[edge] (v) -- (right_out);
\end{tikzpicture}
\caption{Petersen graph $G$.}
\label{fig:petersen_sub}
\end{subfigure}
\hfill
\begin{subfigure}[b]{0.45\textwidth}
\centering
\begin{tikzpicture}[
    vertex/.style={circle, draw, fill=white, inner sep=2pt, minimum size=6pt},
    edge/.style={thick},
    scale=0.75
]
    \def\Rin{1.2}  
    \def\Rout{2.2} 

    \node[vertex, label={right:$t_i$}] (t) at (90:\Rin) {};
    \node[vertex, label={below:$r_i$}] (r) at (162:\Rin) {};
    \node[vertex, label={left:$p_i$}] (p) at (234:\Rin) {};
    \node[vertex, label={right:$q_i$}] (q) at (306:\Rin) {};
    \node[vertex, label={below:$s_i$}] (s) at (18:\Rin) {};

    \node[vertex, label={left:$u_i$}] (u) at (234:\Rout) {};
    \node[vertex, label={right:$v_i$}] (v) at (306:\Rout) {};

    \draw[edge] (t) -- (p);
    \draw[edge] (p) -- (s);
    \draw[edge] (s) -- (r);
    \draw[edge] (r) -- (q);
    \draw[edge] (q) -- (t);

    \draw[edge] (p) -- (u);
    \draw[edge] (q) -- (v);
    
    \draw[edge] (u) -- (v);
\end{tikzpicture}
\caption{Building block $B_i$.}
\label{fig:building_block_sub}
\end{subfigure}
\caption{Process of obtaining a building block $B_i$ of the Loupekine snark.}
\label{fig:block_loupekine}
\end{figure}

\begin{figure}
\centering

\def\Rin{1.2}   
\def\Rout{2.2}  
\def\BlockSep{3.5}

\newcommand{\DrawBlock}[3]{
    \begin{scope}[xshift=#3]
        \coordinate (#2_t) at (90:\Rin);
        \coordinate (#2_r) at (162:\Rin);
        \coordinate (#2_p) at (234:\Rin);
        \coordinate (#2_q) at (306:\Rin);
        \coordinate (#2_s) at (18:\Rin);
        
        \coordinate (#2_u) at (234:\Rout);
        \coordinate (#2_v) at (306:\Rout);

        \draw[edge] (#2_t) -- (#2_p) -- (#2_s) -- (#2_r) -- (#2_q) -- (#2_t);
        \draw[edge] (#2_p) -- (#2_u);
        \draw[edge] (#2_q) -- (#2_v);
        \draw[edge] (#2_u) -- (#2_v);

        \node[vertex, label={above:$t_{#1}$}] at (#2_t) {};
        \node[vertex, label={above:$r_{#1}$}] at (#2_r) {};
        \node[vertex, label={left:$p_{#1}$}] at (#2_p) {};
        \node[vertex, label={right:$q_{#1}$}] at (#2_q) {};
        \node[vertex, label={above:$s_{#1}$}] at (#2_s) {};
        \node[vertex, label={below:$u_{#1}$}] at (#2_u) {};
        \node[vertex, label={below:$v_{#1}$}] at (#2_v) {};
    \end{scope}
}

\begin{subfigure}[b]{0.48\textwidth}
\centering
\begin{tikzpicture}[
    vertex/.style={circle, draw, fill=white, inner sep=2pt, minimum size=6pt},
    edge/.style={thick},
    label style/.style={font=\small},
    scale=0.8
]
    \DrawBlock{i}{b1}{0cm}
    \DrawBlock{i+1}{b2}{\BlockSep cm}

    \draw[edge] (b1_s) -- (b2_r);
    \draw[edge] (b1_v) -- (b2_u);
\end{tikzpicture}
\caption{Consecutive blocks connected via parallel edges.}
\label{fig:loupekine_direct}
\end{subfigure}
\hfill
\begin{subfigure}[b]{0.48\textwidth}
\centering
\begin{tikzpicture}[
    vertex/.style={circle, draw, fill=white, inner sep=2pt, minimum size=6pt},
    edge/.style={thick},
    label style/.style={font=\small},
    scale=0.8
]
    \DrawBlock{i}{b3}{0cm}
    \DrawBlock{i+1}{b4}{\BlockSep cm}

    \draw[edge] (b3_s) -- (b4_u);
    \draw[edge] (b3_v) -- (b4_r);
\end{tikzpicture}
\caption{Consecutive blocks connected via crossing edges.}
\label{fig:loupekine_crossed}
\end{subfigure}

\caption{Difference between parallel and crossing edges in Loupekine snarks.}
\label{fig:loupekine_edges}
\end{figure}

\begin{figure}
\centering

\def\Rin{0.9}
\def\Rout{1.6}
\def\BlockDist{3.0}
\def\LineDepthInner{-2.0}
\def\LineDepthOuter{-2.4}
\def\OuterMargin{1.2}

\newcommand{\DrawBlock}[2]{
    \begin{scope}[xshift=#1*\BlockDist cm]
        \node[vertex, label={above right:$t_{#1}$}] (#2t#1) at (90:\Rin) {};
        \node[vertex, label={above left:$r_{#1}$}] (#2r#1) at (162:\Rin) {};
        \node[vertex, label={left:$p_{#1}$}] (#2p#1) at (234:\Rin) {};
        \node[vertex, label={right:$q_{#1}$}] (#2q#1) at (306:\Rin) {};
        \node[vertex, label={above right:$s_{#1}$}] (#2s#1) at (18:\Rin) {};
        
        \node[vertex, label={below:$u_{#1}$}] (#2u#1) at (234:\Rout) {};
        \node[vertex, label={below:$v_{#1}$}] (#2v#1) at (306:\Rout) {};

        \draw[edge] (#2t#1) -- (#2p#1) -- (#2s#1) -- (#2r#1) -- (#2q#1) -- (#2t#1);
        \draw[edge] (#2p#1) -- (#2u#1);
        \draw[edge] (#2q#1) -- (#2v#1);
        \draw[edge] (#2u#1) -- (#2v#1);
    \end{scope}
}

\newcommand{\ConnectBlocks}[1]{
    \draw[edge] (#1s0) -- (#1r1); \draw[edge] (#1v0) -- (#1u1);
    \draw[edge] (#1s1) -- (#1u2); \draw[edge] (#1v1) -- (#1r2);
    \draw[edge] (#1s2) -- (#1r3); \draw[edge] (#1v2) -- (#1u3);
    \draw[edge] (#1s3) -- (#1u4); \draw[edge] (#1v3) -- (#1r4);

    \draw[edge, rounded corners=15pt] 
        (#1r0) -- (-\OuterMargin, {0.9*sin(162)})
               -- (-\OuterMargin, \LineDepthOuter)
               -- (4*\BlockDist + \OuterMargin, \LineDepthOuter)
               -- (4*\BlockDist + \OuterMargin, {0.9*sin(18)})
               -- (#1s4);

    \draw[edge, rounded corners=10pt] 
        (#1u0) -- (0, \LineDepthInner)
               -- (4*\BlockDist, \LineDepthInner)
               -- (#1v4);
}

\begin{subfigure}[b]{\textwidth}
\centering
\begin{tikzpicture}[
    vertex/.style={circle, draw, fill=white, inner sep=2pt, minimum size=6pt},
    edge/.style={thick},
    connector/.style={thick},
    scale=0.8
]
    \foreach \i in {0,...,4} { \DrawBlock{\i}{a_} }
    \ConnectBlocks{a_}

    \node[vertex, label={above:$z_0$}] (az0) at (3.0, 2.5) {}; 
    \draw[edge] (a_t0) -- (az0);
    \draw[edge] (a_t1) -- (az0);
    \draw[edge] (a_t2) -- (az0);
    \draw[edge] (a_t3) -- (a_t4);
\end{tikzpicture}
\caption{Example of an $LP_0$ snark with five building blocks.}
\label{fig:loupekine_lp0}
\end{subfigure}

\vspace{0.8cm}

\begin{subfigure}[b]{\textwidth}
\centering
\begin{tikzpicture}[
    vertex/.style={circle, draw, fill=white, inner sep=2pt, minimum size=6pt},
    edge/.style={thick},
    connector/.style={thick},
    scale=0.8
]
    \foreach \i in {0,...,4} { \DrawBlock{\i}{b_} }
    \ConnectBlocks{b_}

    \node[vertex, label={above:$z_0$}] (bz0) at (6.0, 2.5) {};

    \draw[edge] (b_t0) -- (bz0);
    \draw[edge] (b_t2) -- (bz0);
    \draw[edge] (b_t4) -- (bz0);

    \draw[edge] (b_t1) to[out=25, in=155] (b_t3);
\end{tikzpicture}
\caption{Example of an $LP_1$ snark with five building blocks.}
\label{fig:loupekine_lp1}
\end{subfigure}

\caption{Loupekine snarks composed of five building blocks.}
\label{fig:loupekine_types}
\end{figure}

Next, we investigate the vertex cover number of parallel-crossed $LP_0$ and $LP_1$ Loupekine snarks. Our approach begins by analyzing a building block $B_i$ to determine the minimum vertex cover. Based on the local analysis of this subgraph, we establish the covering pattern for the entire graph composed of $n$ building blocks.

\begin{theorem}
\label{prop:loupekine_block}
Let $B_i$ be a building block of a Loupekine snark. Then, $\tau(B_i) = 4$.
\end{theorem}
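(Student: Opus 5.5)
Recall that $B_i$ has the seven vertices $t_i,r_i,p_i,q_i,s_i,u_i,v_i$ and the eight edges $t_ip_i$, $p_is_i$, $s_ir_i$, $r_iq_i$, $q_it_i$ (the inner pentagon), $p_iu_i$, $q_iv_i$ and $u_iv_i$. The plan is to prove the two inequalities $\tau(B_i)\le 4$ and $\tau(B_i)\ge 4$ separately, following the pattern used for $\tau(A_1)$ and $\tau(L_j)$ in Lemmas~\ref{prop:prop_18} and~\ref{prop:blanusa-first-block}.

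For the upper bound, I would take $S=\{p_i,q_i,r_i,v_i\}$. The vertex $p_i$ covers $t_ip_i$, $p_is_i$ and $p_iu_i$. The vertex $q_i$ covers $r_iq_i$, $q_it_i$ and $q_iv_i$. The vertex $r_i$ covers $s_ir_i$, and $v_i$ covers $u_iv_i$. All eight edges are therefore covered, so $\tau(B_i)\le 4$.

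For the lower bound, I would exhibit two vertex-disjoint subgraphs and apply Lemma~\ref{lemma:VCsubgraph}. The vertices $\{t_i,p_i,s_i,r_i,q_i\}$ induce a $5$-cycle $C_5$, and by Lemma~\ref{lemma:cyclesVC} we have $\tau(C_5)=3$. The edge $u_iv_i$ is vertex-disjoint from this cycle, and as a copy of $P_2$ it needs one vertex. Lemma~\ref{lemma:VCsubgraph} then gives $\tau(B_i)\ge 3+1=4$. This is the same style of argument as for the subgraph $H_i$ in the Goldberg case.

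No step here is a real obstacle. The only thing to do carefully is to confirm from the definition that the five inner vertices induce exactly a $5$-cycle. This holds because $B_i$ is obtained from the Petersen graph by deleting an induced $P_3$ on outer vertices, so the inner pentagram is left intact.
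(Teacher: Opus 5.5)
Your proof is correct and follows the paper's argument: the lower bound comes from the induced $5$-cycle $(t_i,p_i,s_i,r_i,q_i)$ together with the vertex-disjoint edge $u_iv_i$, and the upper bound comes from an explicit $4$-element cover. The only difference is that you use $\{p_i,q_i,r_i,v_i\}$, while the paper exhibits $\{t_i,s_i,q_i,u_i\}$ and $\{t_i,r_i,p_i,v_i\}$; your set does cover all eight edges.
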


\begin{proof}
    We first prove the lower bound $\tau(B_i) \geq 4$. Note that $B_i$ contains an induced 5-cycle $C_5 = (t_i, p_i, s_i, r_i, q_i)$ and an edge $u_i v_i$ that is not incident to $C_5$. By Lemma~\ref{lemma:cyclesVC}, the vertex cover number of an $n$-cycle is $\lceil n/2 \rceil$. Thus, covering the edges of $C_5$ requires at least $\lceil 5/2 \rceil = 3$ vertices. Since the edge $u_i v_i$ shares no vertices with $C_5$, covering it requires selecting at least one of its endpoints. Therefore, $\tau(B_i) \geq 3 + 1 = 4$.
    
    To establish the upper bound $\tau(B_i) \leq 4$, we can explicitly define valid vertex covers of cardinality 4 for $B_i$, such as $S_1 = \{t_i, s_i, q_i, u_i\}$ and $S_2 = \{t_i, r_i, p_i, v_i\}$, as illustrated in Figure~\ref{fig:cover_s1}, showing the cover $S_1$, and Figure~\ref{fig:cover_s2}, showing the cover $S_2$. Thus, $\tau(B_i) \leq |S_1| = 4$. 
    
    Since the lower and upper bounds match, we conclude that $\tau(B_i) = 4$.
\end{proof}

\begin{figure}
\def\Rin{1.2}
\def\Rout{2.2}

\begin{subfigure}[b]{0.48\textwidth}
\centering
\begin{tikzpicture}[
    vertex/.style={circle, draw, fill=white, inner sep=2pt, minimum size=6pt},
    covered/.style={circle, draw, fill=black, inner sep=2pt, minimum size=6pt},
    edge/.style={thick},
    scale=0.8
]
    \node[covered, label={right:$t_i$}] (l_t) at (90:\Rin) {};
    \node[vertex,  label={below:$r_i$}] (l_r) at (162:\Rin) {};
    \node[vertex,  label={left:$p_i$}]  (l_p) at (234:\Rin) {};
    \node[covered, label={right:$q_i$}] (l_q) at (306:\Rin) {};
    \node[covered, label={below:$s_i$}] (l_s) at (18:\Rin) {};

    \node[covered, label={left:$u_i$}]  (l_u) at (234:\Rout) {};
    \node[vertex,  label={right:$v_i$}] (l_v) at (306:\Rout) {};

    \draw[edge] (l_t) -- (l_p); \draw[edge] (l_p) -- (l_s);
    \draw[edge] (l_s) -- (l_r); \draw[edge] (l_r) -- (l_q);
    \draw[edge] (l_q) -- (l_t);
    \draw[edge] (l_p) -- (l_u); \draw[edge] (l_q) -- (l_v);
    \draw[edge] (l_u) -- (l_v);
\end{tikzpicture}
\caption{Cover $S_1$.}
\label{fig:cover_s1}
\end{subfigure}
\hfill
\begin{subfigure}[b]{0.48\textwidth}
\centering
\begin{tikzpicture}[
    vertex/.style={circle, draw, fill=white, inner sep=2pt, minimum size=6pt},
    covered/.style={circle, draw, fill=black, inner sep=2pt, minimum size=6pt},
    edge/.style={thick},
    scale=0.8
]
    \node[covered, label={right:$t_i$}] (r_t) at (90:\Rin) {};
    \node[covered, label={below:$r_i$}] (r_r) at (162:\Rin) {};
    \node[covered, label={left:$p_i$}]  (r_p) at (234:\Rin) {};
    \node[vertex,  label={right:$q_i$}] (r_q) at (306:\Rin) {};
    \node[vertex,  label={below:$s_i$}] (r_s) at (18:\Rin) {};

    \node[vertex,  label={left:$u_i$}]  (r_u) at (234:\Rout) {};
    \node[covered, label={right:$v_i$}] (r_v) at (306:\Rout) {};

    \draw[edge] (r_t) -- (r_p); \draw[edge] (r_p) -- (r_s);
    \draw[edge] (r_s) -- (r_r); \draw[edge] (r_r) -- (r_q);
    \draw[edge] (r_q) -- (r_t);
    \draw[edge] (r_p) -- (r_u); \draw[edge] (r_q) -- (r_v);
    \draw[edge] (r_u) -- (r_v);
\end{tikzpicture}
\caption{Cover $S_2$.}
\label{fig:cover_s2}
\end{subfigure}

\caption{Two examples of minimum vertex covers for the building block $B_i$.}
\label{fig:blocos_independentes}
\end{figure}

\begin{theorem}
\label{the:loupekine-snark-proof}
If $L_n$ is a parallel-crossed $LP_0$ or $LP_1$ Loupekine snark, then $\tau(L_n) = 4n$.
\end{theorem}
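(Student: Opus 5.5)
The lower bound follows from Lemma~\ref{lemma:VCsubgraph}, and the upper bound requires an explicit cover of size $4n$ built from the covers $S_1$ and $S_2$ of Theorem~\ref{prop:loupekine_block}.

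\textbf{Lower bound.} The blocks $B_0,\dots,B_{n-1}$ are pairwise vertex-disjoint subgraphs of $L_n$. By Lemma~\ref{lemma:VCsubgraph} and Theorem~\ref{prop:loupekine_block}, $\tau(L_n)\ge \sum_i \tau(B_i)=4n$. The auxiliary vertices $z_j$ only add to this count.

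\textbf{Upper bound: translating link-edges into constraints.} Both $S_1=\{t_i,s_i,q_i,u_i\}$ and $S_2=\{t_i,r_i,p_i,v_i\}$ contain $t_i$. So if every block receives one of them, every upper link-edge ($t_jt_l$ or $t_jz_j$) is covered and no $z_j$ is ever needed. It remains to assign a type $X_i\in\{1,2\}$ to each block and take $S=\bigcup_i S_{X_i}(B_i)$. The link-edges give the following conditions.
\begin{itemize}
\item For parallel edges $s_ir_{i+1},v_iu_{i+1}$: covering $s_ir_{i+1}$ needs $X_i=1$ or $X_{i+1}=2$, and covering $v_iu_{i+1}$ needs $X_i=2$ or $X_{i+1}=1$. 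Together these force $X_i=X_{i+1}$.
\item For crossing edges $s_iu_{i+1},v_ir_{i+1}$: the conditions force $X_i\neq X_{i+1}$.
\end{itemize}
A consistent assignment around the cycle of blocks therefore exists exactly when the number of crossing links is even. I would count these links using the rule that link $i$ is crossing iff $i$ is odd. This gives $(n-1)/2$ crossing links, since the closing link $E_{n-1,0}$ is parallel because $n-1$ is even. For $n\equiv 1\pmod 4$ this count is even, and alternating types at the crossing links gives a cover of size exactly $4n$.

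\textbf{Main obstacle: $n\equiv 3\pmod 4$.} Here the parity is odd, so at least one block must use a different cover. I have verified that $S_1$ and $S_2$ are the only size-4 covers of $B_i$ containing $t_i$, so any such block must omit $t_i$. The size-4 covers of $B_i$ avoiding $t_i$ are $\{p_i,q_i\}$ together with one vertex of $\{r_i,s_i\}$ and one of $\{u_i,v_i\}$. Among these, $\{p_i,q_i,r_i,u_i\}$ and $\{p_i,q_i,s_i,v_i\}$ mix the two patterns and so flip the parity constraint across that block.

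The price is that the upper link-edge at $t_i$ must now be covered from the other side. This is free only if $B_i$ is in a pair $\{B_j,B_l\}$ joined by $t_jt_l$ and the partner keeps its $t$. If the partition contains a pair, I would place the parity-breaking block there. If every part is a triple (for example $n=3$), omitting $t_i$ forces $z_j$ into the cover and adds one vertex. So I expect this step to be where the argument either needs a more careful case analysis (possibly relabelling which index starts the mod-4 pattern) or reveals that the value is $4n+1$ in that sub-case.

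I would therefore carry out the parity count and the triple-only case explicitly, checking small cases such as $n=3$ by hand, before asserting $\tau(L_n)=4n$ in full generality.
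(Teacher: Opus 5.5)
Your lower bound matches the paper's. For $n\equiv 1\pmod 4$ your assignment is exactly the paper's cover: $\{t_i,s_i,q_i,u_i\}$ for $i\equiv 0,1$ and $\{t_i,r_i,p_i,v_i\}$ for $i\equiv 2,3\pmod 4$. Your parity count is also correct, and it exposes a hole in the paper's own proof. The paper checks the links out of $B_0,\dots,B_3$, treating the link out of $B_3$ as if it returned to $B_0$, and then says the pattern ``repeats periodically''. But for $n\equiv 3\pmod 4$ the closing link $E_{n-1,0}$ is parallel. It joins $B_{n-1}$, covered by $\{t_{n-1},r_{n-1},p_{n-1},v_{n-1}\}$, to $B_0$, covered by $\{t_0,s_0,q_0,u_0\}$, so the edge $s_{n-1}r_0$ is not covered.

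The step of your plan that fails is the proposed repair, and no repair exists: for $n\equiv 3\pmod 4$ there is no cover of size $4n$, for any partition into pairs and triples.

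\begin{itemize}
\item Suppose $|S|=4n$. Then $S$ contains no $z_j$ and exactly four vertices of each block. Every size-4 cover of $B_i$ contains exactly one of $r_i,s_i$ and exactly one of $u_i,v_i$.
\item Put $k_i=|S\cap\{r_i,u_i\}|$, so $|S\cap\{s_i,v_i\}|=2-k_i$. Thus $S$ has $2n$ connector vertices, each on exactly one of the $2n$ link-edges, so every link-edge has exactly one end in $S$. The two edges of $E_{i,i+1}$ then give $(2-k_i)+k_{i+1}=2$, i.e.\ $k_i$ is constant.
\item If the constant is $2$ (resp.\ $0$), every block uses $\{p_i,q_i,r_i,u_i\}$ (resp.\ $\{p_i,q_i,s_i,v_i\}$). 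Then no $t_i$ lies in $S$, and every upper link-edge, including any pair edge $t_jt_l$, is uncovered. So these covers do not ``flip parity''; they force the same cover on every other block.
\item If the constant is $1$, each block has connector pattern $\{s_i,u_i\}$ or $\{r_i,v_i\}$. Your same/different rule around a cycle with $(n-1)/2$ crossing links, an odd number, is then contradictory.
\end{itemize}

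Hence $\tau(L_n)\ge 4n+1$ when $n\equiv 3\pmod 4$. Adding $s_{n-1}$ to the paper's cover gives $\tau(L_n)=4n+1$ there. Already for $n=3$ you can see this directly: a $12$-cover must omit $z_0$, so $t_0,t_1,t_2\in S$, and the types satisfy $X_0=X_1\neq X_2=X_0$.

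So the statement, and the paper's proof, are valid only for $n\equiv 1\pmod 4$. Your suspicion was right, but the failure is not confined to the triple-only sub-case.
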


\begin{proof}
    To prove the upper bound $\tau(L_n) \leq 4n$, we demonstrate the existence of a vertex cover $S$ with cardinality $4n$. We define $S$ as the union of $n$ disjoint subsets $S_i$:
    
    $$S = \bigcup_{i=0}^{n-1} S_i, \quad \text{where } 
    S_i = \begin{cases} 
    \{t_i, s_i, q_i, u_i\} & \text{if } i \equiv 0,1 \pmod{4}; \\
    \{t_i, r_i, p_i, v_i\} & \text{if } i \equiv 2,3 \pmod{4}.
    \end{cases}$$
    
    Since the vertex sets of the building blocks $B_i$ are disjoint and each $S_i$ contains exactly four vertices, the total cardinality is $|S| = \sum_{i=0}^{n-1} |S_i| = 4n$. We now verify that $S$ covers all edges of $L_n$.

    Regarding the edges in the set $E(B_i)$, for $0\leq i \leq n-1$, Theorem~\ref{prop:loupekine_block} and its proof ensure that each $S_i$ is a vertex cover of the  block $B_i$. Specifically, for blocks where $i \equiv 0, 1 \pmod{4}$, the subset $\{t_i, s_i, q_i, u_i\}$ covers all the edges of $B_i$, as does the subset $\{t_i, r_i, p_i, v_i\}$ for blocks where $i \equiv 2, 3 \pmod{4}$.

    The cover also accounts for all link-edges between consecutive blocks under the parallel-crossed structure. In the parallel connection between $B_0$ and $B_1$, the edges $s_0r_1$ and $v_0u_1$ are covered by $s_0 \in S_0$ and $u_1 \in S_1$, respectively. In the subsequent crossed connection from $B_1$ to $B_2$, the edges $s_1u_2$ and $v_1r_2$ are covered by $s_1 \in S_1$ and $r_2 \in S_2$, respectively. For the direct connection from $B_2$ to $B_3$, the edge $v_2u_3$ is covered by $v_2 \in S_2$, while $s_2r_3$ is covered by $r_3 \in S_3$. Finally, in the crossed connection from $B_3$ to $B_0$, the edges $s_3u_0$ and $v_3r_0$ are covered by $u_0 \in S_0$ and $v_3 \in S_3$. This verification pattern repeats periodically.

    All upper link-edges are also covered because they are incident to vertices of type $t_i$, and $t_i \in S_i$ for all $i = 0, \dots, n-1$. Consequently, every edge in $L_n$ has at least one endpoint in $S$, which confirms that $\tau(L_n) \leq 4n$. The application of this cover to parallel-crossed $LP_0$ and $LP_1$ Loupekine snarks is illustrated in Figure~\ref{fig:loupekine_cover_lp0}, detailing the vertex cover of a parallel-crossed $LP_0$ snark, and Figure~\ref{fig:loupekine_cover_lp1}, detailing the vertex cover of a parallel-crossed $LP_1$ snark.

\begin{figure}
\centering

\def\Rin{0.9}
\def\Rout{1.6}
\def\BlockDist{3.0}
\def\LineDepthInner{-2.0}
\def\LineDepthOuter{-2.4}
\def\OuterMargin{1.2}

\newcommand{\DrawBlock}[2]{
    \begin{scope}[xshift=#1*\BlockDist cm]
        
        \def\colT{white} \def\colR{white} \def\colP{white} 
        \def\colQ{white} \def\colS{white} \def\colU{white} \def\colV{white}

        \ifnum #1=0 \def\colT{black} \def\colS{black} \def\colQ{black} \def\colU{black} \fi
        \ifnum #1=1 \def\colT{black} \def\colS{black} \def\colQ{black} \def\colU{black} \fi
        \ifnum #1=4 \def\colT{black} \def\colS{black} \def\colQ{black} \def\colU{black} \fi

        \ifnum #1=2 \def\colT{black} \def\colR{black} \def\colP{black} \def\colV{black} \fi
        \ifnum #1=3 \def\colT{black} \def\colR{black} \def\colP{black} \def\colV{black} \fi

        \node[vertex, fill=\colT, label={above right:$t_{#1}$}] (#2t#1) at (90:\Rin) {};
        \node[vertex, fill=\colR, label={above left:$r_{#1}$}] (#2r#1) at (162:\Rin) {};
        \node[vertex, fill=\colP, label={left:$p_{#1}$}] (#2p#1) at (234:\Rin) {};
        \node[vertex, fill=\colQ, label={right:$q_{#1}$}] (#2q#1) at (306:\Rin) {};
        \node[vertex, fill=\colS, label={above right:$s_{#1}$}] (#2s#1) at (18:\Rin) {};
        
        \node[vertex, fill=\colU, label={below:$u_{#1}$}] (#2u#1) at (234:\Rout) {};
        \node[vertex, fill=\colV, label={below:$v_{#1}$}] (#2v#1) at (306:\Rout) {};

        \draw[edge] (#2t#1) -- (#2p#1) -- (#2s#1) -- (#2r#1) -- (#2q#1) -- (#2t#1);
        \draw[edge] (#2p#1) -- (#2u#1);
        \draw[edge] (#2q#1) -- (#2v#1);
        \draw[edge] (#2u#1) -- (#2v#1);
    \end{scope}
}

\newcommand{\ConnectBlocks}[1]{
    \draw[edge] (#1s0) -- (#1r1); \draw[edge] (#1v0) -- (#1u1);
    \draw[edge] (#1s1) -- (#1u2); \draw[edge] (#1v1) -- (#1r2);
    \draw[edge] (#1s2) -- (#1r3); \draw[edge] (#1v2) -- (#1u3);
    \draw[edge] (#1s3) -- (#1u4); \draw[edge] (#1v3) -- (#1r4);

    \draw[edge, rounded corners=15pt] 
        (#1r0) -- (-\OuterMargin, {0.9*sin(162)})
               -- (-\OuterMargin, \LineDepthOuter)
               -- (4*\BlockDist + \OuterMargin, \LineDepthOuter)
               -- (4*\BlockDist + \OuterMargin, {0.9*sin(18)})
               -- (#1s4);

    \draw[edge, rounded corners=10pt] 
        (#1u0) -- (0, \LineDepthInner)
               -- (4*\BlockDist, \LineDepthInner)
               -- (#1v4);
}

\begin{subfigure}[b]{\textwidth}
\centering
\begin{tikzpicture}[
    vertex/.style={circle, draw, fill=white, inner sep=2pt, minimum size=6pt},
    covered/.style={circle, draw, fill=black, inner sep=2pt, minimum size=6pt},
    edge/.style={thick},
    connector/.style={thick},
    scale=0.8
]
    \foreach \i in {0,...,4} { \DrawBlock{\i}{a_} }
    \ConnectBlocks{a_}

    \node[vertex, label={above:$z_0$}] (az0) at (3.0, 2.5) {}; 
    \draw[edge] (a_t0) -- (az0);
    \draw[edge] (a_t1) -- (az0);
    \draw[edge] (a_t2) -- (az0);
    \draw[edge] (a_t3) -- (a_t4);
\end{tikzpicture}
\caption{Vertex cover of a parallel-crossed $LP_0$ snark.}
\label{fig:loupekine_cover_lp0}
\end{subfigure}

\vspace{0.8cm}

\begin{subfigure}[b]{\textwidth}
\centering
\begin{tikzpicture}[
    vertex/.style={circle, draw, fill=white, inner sep=2pt, minimum size=6pt},
    covered/.style={circle, draw, fill=black, inner sep=2pt, minimum size=6pt},
    edge/.style={thick},
    connector/.style={thick},
    scale=0.8
]
    \foreach \i in {0,...,4} { \DrawBlock{\i}{b_} }
    \ConnectBlocks{b_}

    \node[vertex, label={above:$z_0$}] (bz0) at (6.0, 2.5) {};

    \draw[edge] (b_t0) -- (bz0);
    \draw[edge] (b_t2) -- (bz0);
    \draw[edge] (b_t4) -- (bz0);

    \draw[edge] (b_t1) to[out=25, in=155] (b_t3);
\end{tikzpicture}
\caption{Vertex cover of a parallel-crossed $LP_1$ snark.}
\label{fig:loupekine_cover_lp1}
\end{subfigure}

\caption{Minimum vertex covers for parallel-crossed $LP_0$ and $LP_1$ Loupekine snarks.}
\label{fig:loupekine_cover}
\end{figure}

To establish the lower bound $\tau(L_n) \geq 4n$, observe that the vertex sets $V(B_0), V(B_1), \dots, V(B_{n-1})$ are pairwise disjoint. Any vertex cover of $L_n$ must cover the internal edges of each block $B_i$, which, by Theorem~\ref{prop:loupekine_block}, requires at least 4 vertices from $V(B_i)$. 
Thus, by Lemma~\ref{lemma:VCsubgraph}, we have that $\tau(L_n) \geq 4n$. Combining this with the upper bound, we conclude that $\tau(L_n) = 4n$.
\end{proof}

\section{Concluding Remarks}

While the vertex cover problem is a well-studied topic in graph theory and computational complexity, the study of the parameter $\tau(G)$ remains open for many graph classes, particularly those with a maximum degree of three or higher. This work contributes to the field by establishing the vertex cover number for several families of snarks: flower snarks, Goldberg snarks, the first family of generalized Blanu\v{s}a snarks, and parallel-crossed $LP_0$ and $LP_1$ Loupekine snarks. Our main results are summarized in Table~\ref{tab:snarks_dr}. We also proved that the problem of determining whether an arbitrary snark has a vertex cover of size at most $k$ is NP-complete. Although the Vertex Cover Problem remains NP-complete when restricted to this class, as future work, we propose studying this parameter for other subclasses of snarks.

\begin{table}
    \centering
    \caption{Vertex cover number $\tau(G)$ for the studied snark families.}
    \vspace{0.2cm}
    \renewcommand{\arraystretch}{1.5}
    \begin{tabular}{@{} l c @{}}
        \toprule
        \textbf{Snark Family} & \boldmath{$\tau(G)$} \\
        \midrule
        Flower snark $F_n$ & $2n+1$ \\
        Goldberg snark $G_n$ & $\left\lceil \frac{9n}{2} \right\rceil$ \\
        First family of generalized Blanu\v{s}a snarks $\mathfrak{B}^1_n$ & $\left\lceil \frac{14n + 17}{3} \right\rceil$ \\
        Parallel-crossed $LP_0$ and $LP_1$ Loupekine snarks $L_n$ & $4n$ \\
        \bottomrule
    \end{tabular}
    \label{tab:snarks_dr}
\end{table}

\section{Acknowledgments}

This work was partially supported by Conselho Nacional de Desenvolvimento Científico e Tecnológico – CNPq.

\bibliographystyle{plain} 
\bibliography{mybib}

\end{document}